\documentclass[11pt]{amsart}
\usepackage[foot]{amsaddr}
\usepackage{import}
\usepackage[english]{babel}
\usepackage[%pagebackref,
pdftex,hyperfootnotes]
{hyperref}

\usepackage[total={19cm,24cm},top=2.5cm, left=1.5cm]{geometry}
\usepackage{rotating,multirow,pdflscape}
\usepackage{amssymb,latexsym,amsmath,amsthm}
\usepackage{mathrsfs}
\usepackage{mathtools,arydshln,mathdots}
\usepackage{bigints}
\makeatletter
\renewcommand*\env@matrix[1][*\c@MaxMatrixCols c]{%
  \hskip -\arraycolsep
  \let\@ifnextchar\new@ifnextchar
  \array{#1}}
\makeatother

\mathtoolsset{centercolon}
\usepackage[table]{xcolor}
\usepackage[toc,page]{appendix}

\usepackage{tocvsec2}
%%%%%%%%%%%% Para compilar con MathTimes 2 Professional y BaskerVille

%\usepackage[complete,mtpscr,mtpbbd,mtpfrak,subscriptcorrection,uprightGreek]{mtpro2}
%\usepackage{baskervald}
%
%\makeatletter  %%%  Times Smallcaps for Baskerville
%\let\scshape\relax % to avoid a warning
%\DeclareRobustCommand\scshape{%
%	\not@math@alphabet\scshape\relax
%	\ifnum\pdf@strcmp{\f@family}{\familydefault}=\z@
%	\fontfamily{ptm}%
%	\fi
%	\fontshape\scdefault\selectfont}
%\makeatother

%           % Euler for math | Palatino for rm | Helvetica for ss | Courier for tt
\usepackage[T1]{fontenc}
 % rm
\linespread{1.05}        % Palatino needs more leading
\usepackage[scaled]{helvet} % ss
\usepackage{courier} % tt
\usepackage{eulervm} % a better implementation of the euler package (not in gwTeX)
\normalfont

%\usepackage {graphicx}
%\usepackage{color}
%\usepackage{amsfonts}
%\usepackage{amsmath}
%\usepackage{latexsym}
%\usepackage{dsfont}
%\usepackage{refcheck}
%\usepackage{pxfonts,bbm}
%\usepackage{yhmath}% http://ctan.org/pkg/yhmath

%\usepackage{bigstrut}
%\usepackage{tikz}
%
%\usetikzlibrary{calc,shadows,shapes.callouts,shapes.geometric,shapes.misc}
%\usepackage{upgreek,txfonts}
%\numberwithin{equation}{section}  % Enumerar las ecuaciones seg\'{u}n secci\'{o}n
%\allowdisplaybreaks % Para cambiar de p\'{a}gina en alineaci\'{o}n de ecuaciones y no pasarla toda a una s\'{o}la p\'{a}gina

%\usepackage{blindtext}
\newtheorem{coro}{{Corollary}}

\newtheorem{defi}{{ Definition}}
\newtheorem{teo}{Theorem}
\newtheorem{pro}{ Proposition }

\newtheorem{rem}{Remark}

\renewcommand{\d}{\operatorname{d}}
\newcommand{\Exp}[1]{\operatorname{e}^{#1}}

\newcommand{\diag}{\operatorname{diag}}

\newcommand{\C}{\mathbb{X}}
\newcommand{\T}{\mathbb{T}}

\newcommand{\R}{\mathbb{R}}

\newcommand{\prodint}[1]{\left\langle{#1}\right\rangle}

\begin{document}
	
	\title[CMV biorthogonal  Laurent polynomials: Christoffel and Geronimus transformations]{%Christoffel and Geronimus transformations for\\
		CMV biorthogonal  Laurent polynomials:\\ Christoffel formulas for Christoffel and Geronimus perturbations}
	
	 \author[G Ariznabarreta]{Gerardo Ariznabarreta$^{1,2}$}
	 \address{Departamento de Física Teórica II (Métodos Matemáticos de la Física), Universidad Complutense de Madrid, Ciudad Universitaria, Plaza de Ciencias 1,  28040 Madrid, Spain}
	 \email{gariznab@ucm.es}
	
	 \author[M Mañas]{Manuel Mañas$^{1}$}
	 % \address{Departamento de Física Teórica II (Métodos Matemáticos de la Física), Universidad Complutense de Madrid, Ciudad Universitaria, Plaza de Ciencias 1, 28040 Madrid, Spain}
	 \email{manuel.manas@ucm.es}
	
	 \author[A Toledano] {Alfredo Toledano}
	 \email{alfrtole@ucm.es}
	
	 \thanks{$^1$Thanks financial support from the Spanish ``Ministerio de Economía y Competitividad" research project [MTM2015-65888-C4-3-P],\emph{ Ortogonalidad, teoría de la aproximación y aplicaciones en física matemática}}
	 \thanks{$^2$Thanks financial support from the Universidad Complutense de Madrid  Program ``Ayudas para Becas y Contratos Complutenses Predoctorales en España 2011"}
	
\begin{abstract}
Quasidefinite sesquilinear forms for Laurent polynomials  in the complex plane  and corresponding CMV biorthogonal Laurent polynomial families are studied.  Bivariate linear functionals encompass large families of orthogonalities like Sobolev and discrete Sobolev  types.  Two possible  Christoffel transformations of these linear functionals are discussed. Either the  linear functionals are multiplied by a Laurent polynomial, or are multiplied by the  complex conjugate of a Laurent polynomial. For the Geronimus transformation,   the linear functional is perturbed in two possible manners as well, by a division by  a Laurent polynomial or by a complex conjugate of a Laurent polynomial, in both cases  the addition of  appropriate masses (linear functionals supported on the zeros of the perturbing Laurent polynomial) is considered.
The connection formulas for the CMV biorthogonal Laurent polynomials, its norms, and Christoffel--Darboux kernels, in  all the four cases, are given.  For the Geronimus transformation, the connection formulas for the second kind functions and mixed Christoffel--Darboux kernels are also given in the two possible cases.
For prepared Laurent polynomials, i.e. of the form $L(z)=L_nz^n+\cdots+L_{-n}z^{-n}$, $L_nL_{-n}\neq 0$, these connection formulas lead to quasideterminantal (quotient of determinants) Christoffel  formulas for all the four transformations, expressing an arbitrary degree  perturbed biorthogonal Laurent polynomial in terms of
$2n$ unperturbed biorthogonal Laurent polynomials, their second kind functions or Christoffel--Darboux kernels and its mixed versions.
Different curves are presented as examples, like the real line, the circle, the Cassini oval and the cardioid.
The unit circle case, given its exceptional properties, is discussed in more detail. In this case, a particularly relevant role is played by the reciprocal polynomial, and
the Christoffel formulas  provide now with two possible ways of expressing  the same perturbed quantities in terms of the original ones, one using only the nonperturbed biorthogonal family of  Laurent polynomials, and the other using  the Christoffel--Darboux kernels and its mixed versions, as well.	
\end{abstract}

 \keywords{ Biorthogonal Laurent polynomials, CMV, quasidefinite linear functionlas, Christoffel transformation, Geronimus transformation, connection formulas,
	 	Christoffel formulas, quasideterminats, spectral jets}

\subjclass{42C05,15A23}

 \maketitle	

%\tableofcontents

\section{Introduction}

The study of perturbations of a linear functional $u$  in the  space of polynomials is  an active area of research in the theory of orthogonal polynomials.
When you deal with  positive definite measures, this study provides  information about the Gaussian quadrature rules \cite{Gaut1,Gaut2}.  Christoffel perturbations, $\hat{u}= p(x) u$, where $p(x)$ is a polynomial,
were studied in 1858 by Christoffel   \cite{christoffel} giving explicit formulas relating the corresponding sequences of orthogonal polynomials with respect to two measures.
These are  called Christoffel formulas, and can be considered  a classical result in the theory of orthogonal polynomials which can be found in a number of  textbooks, see for example \cite{Chi,Sze,Gaut2}.
Explicit relations between the corresponding sequences of orthogonal polynomials  have been extensively studied, see \cite{Gaut1}.
Connection formulas between two families of orthogonal polynomials allow to express any polynomial of a given degree $n$ as a linear combination of all polynomials of degree less than or equal to $n$ in the second family. A remarkable  fact regarding  the Christoffel finding is that in  some cases the number of terms does not grow with the degree $n$ but remain constant,  equal to the degree of the perturbing polynomial. See \cite{Gaut1,Gaut2}.

 When the perturbed  functional is $v$ given  by $p(x) v=u,$ for  $p(x)$ a polynomial, we say that we have a Geronimus transformations. Was Geronimus \cite{Geronimus}, studying the results of \cite{Hann} concerning the characterization of classical orthogonal polynomials (Hermite, Laguerre, Jacobi, and Bessel), the
  first who discussed these transformations.   Christoffel type formulas in terms  of the second kind functions  where  found in \cite{Maro}. Let us notice that in \cite{Geronimus} no Christoffel type formula was derived. Despite this fact, we will refer to the Christoffel formulas for Geronimus transformations as  Christoffel--Geronimus formulas.
The problem of  given two functionals $u$ and $v$ such that  $p(x)u= q(x)v,$ where $p(x), q(x)$ are polynomials was  analyzed in \cite{Uva} when $u,v$ are positive definite measures supported on the real line and in \cite{dini} for  linear functionals, see also \cite{Zhe}.   Uvarov  \cite{Uva}  found Christoffel type formulas, and the addition of a finite number of Dirac masses to a linear functional appears in the framework of the spectral analysis of fourth order linear differential operators with polynomial coefficients and with orthogonal polynomials as eigenfunctions.  Geronimus perturbations of degree two of scalar bilinear forms have been very recently treated in \cite{Derevyagin} and in the general case in \cite{DereM}.

For a positive definite functional the orthogonal polynomials in the unit circle $\mathbb T$ or Szeg\H{o} polynomials are  monic polynomials  $P_n$ of degree  $n$ which fulfill  $\int_{\T}P_n(z) z^{-k} \d \mu(z)=0$, for $ k=0,1,\dots,n-1$, \cite{Sze} and also  \cite{Simon1}.
Orthogonal polynomials on the real line with support on  $[-1,1]$ are connected with the  Szeg\H{o} polynomials    \cite{Freud,Berriochoa}.   The extension to the unit circle context of the three-term relations and tridiagonal Jacobi matrices of the real scenario,  require of  Hessenberg matrices and give the Szeg\H{o} recursion relation, which  is expressed in terms of  the reciprocal, or reverse, Szeg\H{o} polynomials $P^*_l(z):=z^l \overline{P_l(\bar z^{-1})}$ and
reflection or Verblunsky  coefficients $\alpha_l:=P_l(0)$,  $\left(\begin{smallmatrix} P_l \\ P_{l}^* \end{smallmatrix}\right)=
\left(\begin{smallmatrix} z & \alpha_l \\ z \bar \alpha_l & 1 \end{smallmatrix}\right)
\left(\begin{smallmatrix} P_{l-1} \\ P_{l-1}^* \end{smallmatrix}\right)$.
Szeg\H{o}'s theorem implies for a nontrivial probability measure $\d\mu$  on $\T$ with
Verblunsky coefficients $\{\alpha_n\}_{n=0}^\infty$ that the corresponding Szeg\H{o}'s polynomials are dense in $L^2(\T,\mu)$ if and
only if $\prod_{n=0}^\infty (1-|\alpha_n)|^2)=0$. For an absolutely continuous probability measure  Kolmogorov's density theorem ensures that density in $L^2(\T,\mu)$ of the  Szeg\H{o} polynomials holds iff   the so called Szeg\H{o}'s condition $\int_{\T}\log (w(\theta)\d\theta=-\infty$ is fulfilled, \cite{Simon-S}.
The studies of  \cite{Jones-3,Jones-4}  about the the strong Stieltjes moment problem constitute one of the seeds for the activity  about orthogonal Laurent polynomials on the real line.
If there is a  solution of the moment problem we can find   Laurent polynomials $\{Q_n\}_{n=0}^\infty$ satisfying   $\int_{\R}x^{-n+j}Q_n(x)\d \mu(x)=0$ for $j=0,\dots,n-1$.
 Laurent polynomials on the unit circle $\T$ where discussed in \cite{Thron} , see also \cite{Barroso-Vera,CMV,Barroso-Daruis,Barroso-Snake} where recursion relations, Favard's theorem, quadrature problems, and Christoffel--Darboux formul{\ae} were considered.
Despite the set of orthogonal Laurent polynomials being dense in $L^2(\T,\mu)$ in general this is not true for the Szeg\H{o} polynomials,   \cite{Bul} and \cite{Barroso-Vera}.  The Szeg\H{o} recursion relation is replaced by a  five-term relation similar to the real line situation. Generic orders in the basis used to span the space of orthogonal Laurent polynomials in the unit circle were discussed in \cite{Barroso-Snake}. The CMV (Cantero--Moral--Velázquez) matrices \cite{CMV}, which constitute the representation of the multiplication operator in terms of the basis of orthonormal Laurent polynomials, where discussed in \cite{Cantero} where the connection with Darboux transformations and their applications to integrable systems has been  analyzed. Regarding the CMV ordering, orthogonal Laurent polynomials and Toda systems, see \S 4.4 of \cite{CM} where discrete Toda flows and their connection with Darboux transformations were discussed. For   a matrix version of  this discussion see \cite{ari}.
As was pointed out in \cite{CMV-Simon} the discovery the CMV ordering goes back to   \cite{watkins}.
In \cite{Garza1} spectral transformations of measures supported on the unit circle with real moments are considered and the connection with spectral transformations of measures supported on the interval $[-1, 1]$ using the Szeg\H{o} transformation is presented.
Then, in \cite{Garza2}, a Geronimus perturbation of a nontrivial positive measure supported on the unit circle is studied  as well as the connection between the associated Hessenberg matrices. Finally, in \cite{Garza3} linear spectral transformations of Hermitian linear functionals using the multiplication by some class of Laurent polynomials are considered and the behavior of the Verblunsky parameters of the perturbed linear functional is found.

Strong links do exist between Hermitian linear functionals in the  space of Laurent polynomials with complex coefficients and  the theory of orthogonal polynomials on the unit circle.
For every  positive definite Hermitian linear functional  there exists a probability measure supported on a  subset of the unit circle  giving an  integral representation for the functional.
 In this context, it was at the early 1990's when two groups found two different kind of Christoffel type formulas. Godoy and Marcellán published two papers,
    in \cite{Godoy1}  extensions of the Christoffel determinantal type formulas were given for the analogue of the Christoffel transformation, with an arbitrary degree polynomial having multiple roots, using the original Szeg\H{o} polynomials and its  Christoffel--Darboux kernels. This paper has a distinctive approach from the classical by Christoffel, which allows to express the Christoffel formulas in terms of the Christoffel--Darboux kernel and one polynomial solely. This, is in principle (as you have the Christoffel--Darboux formula at your disposal) simpler than the classical Christoffel formula, in where one needs to evaluate not only one, but  a number --related  to the degree of the perturbing polynomial-- of orthogonal polynomials.  Let us notice that this technique relies on the orthogonal decomposition of linear spaces, which is lost in the context of biorthogonality with respect to a  sesquilinear form, as we consider in our  present paper.
Then, in \cite{Godoy2}, the Geronimus transformation for OPUC was  discussed in terms of second kind functions. In particular,  in Proposition 3 of that paper a Christoffel--Geronimus  formula for a perturbation of degree 2 is given, where no masses are considered. On the other hand,  in \cite{ismail}, alternative formulas \emph{á la Christoffel}, not based on the Christoffel--Darboux kernel  \cite{Godoy1},  were presented in terms of determinantal expressions of the Szeg\H{o} polynomials and their reverse polynomials, also as Uvarov did in \cite{Uva}, they considered multiplication by rational functions, but no masses at all where discussed in this paper.
 Finally, in \cite{R. W. Ruedemann}, some examples on concrete cases where considered within  the biorthogonal scenario.

The transformations  considered in this work are also known as Darboux transformations \cite{matveev}. Indeed,
in the context of  the  Sturm--Liouville theory, Darboux discussed  in \cite{darboux2} a dimensional simplification of a geometrical transformation in two dimensions founded previously  \cite{moutard} which can be considered, as we called it today, a \emph{Darboux} transformation.
In \cite{Yoon},  in the context of orthogonal polynomials,  Darboux transformations are considered in terms of the factorization of the Jacobi matrices, while  in \cite{gru1,gru2} the  bispectrality and Darboux transformations are discussed.
Let us mention that in the theory  of Differential Geometry \cite{eisenhart} the Christoffel, Geronimus, Uvarov and linear spectral transformations are related to geometrical transformations like  the Laplace, Lévy, adjoint Lévy and the fundamental Jonas transformations. See \cite{dsm} for a discrete version of these geometrical transformations, and its connection with integrable systems. The interested reader may consult the excellent monographs \cite{matveev-book} and \cite{rogers-schief}.

 The framework for  biorthogonality in this paper is that of continuous sesquilinear forms in the space of Laurent  polynomials. We base our discussion on the Schwartz's  \emph{noyau-distribution} \cite{Schwartz1}. These general non Toeplitz scenarios have been considered in the scalar case,  for an orthogonal polynomial approach see \cite{Bueno} while from an integrable systems point of view see \cite{adler,adler-van moerbeke}.  For a linear functional setting for orthogonal polynomials see \cite{Maroni1985espaces,Maroni1988calcul}. The space of Laurent polynomials $\mathbb C[z,z^{-1}]$, with an appropriate topology,  is considered as the space of fundamental functions, in the sense of \cite{gelfand-distribu1,gelfand-distribu2}, and  the corresponding space of generalized functions provides with a linear functional setting for orthogonal polynomials theory. Discrete orthogonality appears when we consider linear functionals with discrete  and infinite support  \cite{Nikiforov1991Discrete}.
 We will consider an arbitrary nondegenerate continuous sesquilinear form given by a generalized kernel $u_{z_1,\bar z_2}$ with a quasidefinite Gram matrix. This scheme not only contains the more usual choices of Gram matrices like those  of Toeplitz  type on the unit circle, or those leading to  discrete orthogonality but also Sobolev orthogonality,  as these first examples correspond to matrices of generalized kernels supported by the diagonal,  \cite{Hormander}, and our scheme is applicable to the general case, with support off the diagonal as happens in the last Sobolev mentioned case. The quasidefinite condition on the Gram matrix ensures for a block Gauss--Borel factorization and, consequently,  for the existence of two biorthogonal families of Laurent polynomials.

Even though we are in a commutative scenario, not like those necessary for the matrix or multivariate orthogonality, we will express our Christoffel formulas in terms of quasideterminants.
This is justified for the simplicity that the formulas adopt an also because they will be ready for a future update to more general situations. We refer the reader to \cite{gelfand}
and \cite{Olver} for two complementary expositions on this subject. 	For our needs, given a $2\times 2$ block matrix $\begin{bsmallmatrix}
	A & B \\
	C & D
	\end{bsmallmatrix}$, where $A\in\mathbb C^{p\times p}$, $B,C^\top\in\mathbb C^p$ and $D\in\mathbb C$, the last quasideterminant  is the Schur complement \cite{zhang}, that in our case is just the quotient of two determinants
$\Theta_{*}\begin{bsmallmatrix}
A & B \\
C & D
\end{bsmallmatrix}= \frac{\begin{vsmallmatrix}
	A & B \\
	C & D
	\end{vsmallmatrix}}{|A|}$.

The scheme  based on the Gauss--Borel factorization problem, used here for  transformations for matrix orthogonal polynomials or non-Abelian 2D Toda lattices,  has been applied  by our group also in the following situations
\begin{enumerate}
	\item In cite \cite{am} and in \cite{ari3} we consider some extensions of the Christoffel--Darboux formula to generalized orthogonal polynomials \cite{adler-van moerbeke} and to multiple orthogonal polynomials, respectively.
	\item Matrix orthogonal polynomials, its Christoffel transformations and the relation with non-Abelian Toda hierarchies were studied in \cite{alvarez2015Christoffel}, and in \cite{alvarez2016Transformation} we extended those results to include the Geronimus, Geronimus--Uvarov and Uvarov transformations.
	\item  Multiple orthogonal polynomials and multicomponent Toda \cite{amu}.
	\item  For matrix orthogonal Laurent polynomials on the unit circle, CMV orderings,  and non-Abelian  lattices on the circle \cite{ari}.
	\item Multivariate orthogonal polynomials in several real variables and corresponding multispectral integrable Toda hierarchy  \cite{MVOPR,ari0}.
Multivariate orthogonal polynomials on the multidimensional unit torus, the  multivariate extension of the CMV ordering and integrable Toda hierarchies \cite{ari1}.
\end{enumerate}

\subsection{Objectives, results,   layout of the paper and perspectives}

In the paper we consider a general  sesquilinear form in the complex plane  determined by a bivariate linear functional, its  biorthogonal Laurent families and its behavior under Christoffel and Geronimus perturbations. The Gauss--Borel factorization of  the Gram matrix, which we assume to be quasidefinite, leads to connection formulas for the biorthogonal Laurent polynomial families, the corresponding second kind functions and the standard and mixed Christoffel--Darboux kernels.  This result allows us for the finding of quasideterminantal Christoffel formulas for the Christoffel transformation as well as for the Geronimus transformation. Let us observe that regarding previous works \cite{Godoy1,Godoy2,ismail}  on the subject  we may say that
\begin{enumerate}
	\item The results of \cite{Godoy1,Godoy2,ismail} concern sesquilinear forms supported on the diagonal (in fact associated with a positive Borel measure). Our scheme allows for more general biorthogonality and therefore includes Sobolev orthogonality and discrete Sobolev orthogonality.
	\item  The interesting version of the Christoffel formula in terms of Christoffel--Darboux kernels given in \cite{Godoy1} can not be extended to bivariate linear functionals supported off diagonal.
	\item Regarding Geronimus or Geronimus--Uvarov transformations the papers \cite{Godoy2, ismail} do not incorporate masses at all. In our paper we include a very general class of masses. %that we reduce to those bivariate linear functional supported on the diagonal, given discrete Sobolev products
\end{enumerate}

We have considered two possible Christoffel and two possible Geronimus transformations, and found the corresponding  Christoffel formulas. These two transformations, when the bivariate linear functional reduces to  an univariate linear functional  supported on the unit circle, can be made to coincide. When this happens we are  lead to two possible Christoffel formulas, giving alternative expressions of the perturbed objects. This is characteristic of the unit circle, and does not happens in other cases, like  the Cassini oval or the cardioid.

The layout of the paper is as follows. We now proceed with a reminder on known results regarding CMV Laurent polynomials and biorthogonality.  Then, in \S 2 we perturb a general quasidefinite sesquilinear form by multiplying the corresponding bivariate linear functional with a Laurent polynomial or the complex conjugate of a Laurent polynomial. Assuming quasidefiniteness of the perturbed sesquilinear forms, we derive then, using the Gauss--Borel factorization, connections formulas for the biorthogonal Laurent polynomials and the Christoffel--Darboux kernels, see Propositions \ref{ConnectionChristoffel} and \ref{ConnectionChristoffelCD}. Then, when the perturbing Laurent polynomials are prepared,
i.e. their larger positive and smaller negative powers are equal, we derive the Christoffel formulas, see  Theorem \ref{Christoffel Formulas}. We use spectral jets because we work in the general setting of generic multiplicities of the zeros of the perturbing polynomials and express all relations in terms of quasideterminants, as they are more compact and ready for future use with matrix or multivariate orthogonalities. We give the expressions for the  two biorthogonal families of Laurent polynomials as well as their norms, and for both type of transformations. Then, we discuss the situation that appears when the bivariate linear functional collapses to a univariate linear functional supported on the unit circle. We see that both types of perturbations can be made equal, by choosing one of the polynomials the reciprocal of the other. This gives us two alternative   quasideterminantal Christoffel formulas for the perturbed biorthogonal polynomials and norms, see Proposition \ref{Christoffel FormulasCircle}. Some comments are made for other supports different from the unit circle.
Section 3 is devoted to the  analysis of the Geronimus transformations, which can be considered as the inversion of the previous Christoffel transformations and, as we are working in a linear functional setting, some masses, which are supported in the zeros of the perturbing Laurent polynomials, can be included.  In the first place we derive connection  formulas for the biorthogonal Laurent polynomials, Cauchy second kind functions, Christoffel--Darboux kernels,  and mixed Christoffel--Darboux kernels, see Propositions \ref{Geronimus Connection Laurent}, \ref{Geronimus Connection Cauchy}, \ref{Geronimus Connection CD}, and \ref{Geronimus Connection mixed}. With this at hand we present Theorem \ref{Christoffel-Geronimus formulas}, where we  the Christoffel--Geronimus formulas for both type of perturbations, i.e., dividing by a prepared Laurent polynomial or dividing by the complex conjugate of a prepared Laurent polynomial and the addition of general masses are given. Moreover, we write, for the first time, this type of expressions including the masses.   As we commented above,  to our best knowledge, this has not been discussed so far  for Christoffel--Geronimus formulas in the unit circle or complex plane scenarios. Then, in Theorem \ref{Christoffel-Geronimus formulas circle general mases} we consider the situation in where the bivariate linear functional is just an univariate linear functional with support over the unit circle. Giving, as for the Christoffel transformations, two alternative forms of expressing the perturbed biorthogonal Laurent polynomials and their norms. The masses are in this case the most general ones and therefore go beyond the standard masses supported on the diagonal, discussed in Proposition \ref{Christoffel-Geronimus formulas circle diagonal mases}, and can be considered as a discrete Sobolev perturbation.  There is an Appendix containing some of the  proofs.

For perspectives and  future work, we want to extend these results to the linear spectral case, where there is a multiplication by a quotient of two prepared Laurent polynomials. Moreover, we have some preliminary results regarding perturbations with not prepared polynomials, that we what to understand better. Finally, a matrix version on these results is needed as well as a discussion for multivariate orthogonal polynomials in the complex plane and in the multidimensional torus.

\subsection{CMV biorthogonal Laurent polynomials}

 \begin{defi} [Sequilinear forms]\label{def:sesquilinear}
	A sesquilinear form     $\prodint{\cdot,\cdot}$  in the ring of Laurent polinomials $\mathbb{C}[z,z^{-1}]$  is a continous map $\prodint{\cdot,\cdot}: \mathbb{C}[z,z^{-1}]\times\mathbb{C}[z,z^{-1}]\longrightarrow \mathbb{C},$
%\begin{align*}
%\begin{array}{cccc}
%\prodint{\cdot,\cdot}: &\mathbb{C}[z,z^{-1}]\times\mathbb{C}[z,z^{-1}]&\longrightarrow &\mathbb{C},\\
%&(L(z), M(z))&\mapsto& \prodint{L(z_1),M(z_2)},
%\end{array}
%\end{align*}
such that for any triple $L(z),M(z),N(z)\in  \mathbb{C}[z,z^{-1}]$ the following conditions are satisfied
\begin{enumerate}
	\item  $\prodint{AL(z_1)+BM(z_1),N(z_2)}=A\prodint{L(z_1),N(z_2)}+B\prodint{M(z_1),N(z_2)}$, $\forall A,B\in\mathbb{C}$,
	\item $\prodint{L(z_1),AM(z_2)+BN(z_2)}=\prodint{L(z_1),M(z_2)}\bar A+\prodint{L(z_1),N(z_2)}\bar B$, $\forall A,B\in\mathbb C$.
\end{enumerate}
\end{defi}

Given the ordered biinfinite  basis $\{ z^{l}\}_{l=-\infty}^\infty$ or the semiinfinite CMV basis   $\{\chi^{(l)}(z)\}_{l=0}^\infty$,  $\chi^{(l)}:=\begin{cases}
z^{l/2}, & \text{$l$ even}\\
z^{-(l+1)/2}, & \text{$l$ odd.}
\end{cases}$ of  $\mathbb C[z,z^{-1}]$ the sesquilinear form is characterized by the corresponding Gram matrix. For example, in the first case we have the bi-infinite Gram matrix
$g=\begin{bsmallmatrix}
g_{0,0 } &g_{0,1}& \dots\\
g_{1,0} & g_{1,1} & \dots\\
\vdots & \vdots
\end{bsmallmatrix}$ with  $
g_{k,l}=\prodint{(z_1)^k ,(z_2)^l }$ and $ k,l\in\mathbb Z$.

\begin{defi}[Laurent polynomial spectrum]
The zero set of a function $L(z)$ in  $\mathbb C^*:=\mathbb C\setminus\{0\}$ will be denoted by  $\sigma(L)$ and said to be the spectrum of $L(z)$.
\end{defi}

In this paper we will consider sesquilinear forms constructed in terms of  bivariate linear functionals with well-defined  support. The space of distributions is the space of generalized functions when the space fundamental functions is the set of complex smooth functions \cite{Schwartz} with compact support  $ \mathcal D^*:=C_0^\infty(\mathbb C^*)$.
The zero set of a distribution $u\in(\mathcal D^*)'$ is the open region  $\Omega\subset \mathbb C^*$ whenever for every  $f(z)$ supported on $\Omega$ we have $\langle u, f\rangle =0$. Its complementary set, which is closed, is the support, $\operatorname{supp} (u)$, of the distribution $u$.   The distributions of compact support, $u\in(\mathcal E^*)'$,
are the generalized functions with fundamental functions
$\mathcal E^*=C^\infty(\mathbb C^*)$.
As  $\mathbb C[z,z^{-1}]\subsetneq \mathcal E^*$ we deduce that $(\mathcal E^*)'\subsetneq (\mathbb C[z,z^{-1}])'\cap \mathcal (\mathcal D^*)'$.  %Argumentos similares conducen a $(\mathcal E)'\subsetneq (\mathbb C[z])'\cap \mathcal (\mathcal D)'$.
For any bivariate linear functional   $u_{z_1,\bar z_2}$ with support $\operatorname{supp} (u_{z_1,\bar z_2})$ its projections in the axis $z_i$ are denoted by
$\operatorname{supp}_{i} (u_{z_1,\bar z_2})$, $i=1,2$.
Sesquilinear forms are constructed in terms of bivariate linear functionals. The space
$\mathbb C[z,z^{-1}]$ , with a suitable topology  is considered  as the space of fundamental or test functions and the generalized functions are the continuous linear functionals on this space.
\begin{defi}\label{def:sesquilineal}
We consider the following sesquilinear forms
$ \prodint{L(z_1), M(z_2)}_{u}=\prodint{u_{ z_1,\bar z_2}, L( z_1)\otimes \overline{ M(z_2)}}$ with $ L(z),M(z)\in\mathbb C  [z,z^{-1}]$.
\end{defi}
 Hence, the following sesquilinear forms $ \prodint{L(z_1), M(z_2)}_{u}=\sum\limits_{0\leq n,m\ll\infty}\int  \frac{\partial^nL}{\partial z^n} ( z_1)\overline{\frac{\partial^{m} M}{\partial z^m} (z_2)}\d\mu^{(m,n)}(z_1,z_2)$,
 for Borel  measures $\mu^{(m,n)}(z_1,z_2)$ in $\mathbb C^{2}$, with at least one of them with infinite support, are included in our considerations.

Notice that in the bi-infinite basis
$\{z^n\}_{n\in\mathbb Z}$ we have the Gram matrix
$g=[g_{n,m}]$ with $ g_{n,m}= \prodint {(z_1)^n, ( z_2)^{m}}_{u}=\prodint{u_{z_1,\bar z_2},z_1\otimes (\bar z_2)^m}$.
A bivariate linear functional  $u_{z_1,\bar z_2}$ is supported on the diagonal $z_1=z_2$ if
\begin{align}\label{diagonal}
\prodint{ L(z_1), M(z_2)}_u=\sum_{0\leq n,m\ll \infty}\prodint{u_z^{(n,m)}, \frac{\partial^nL}{\partial z^n} ( z)\overline{\frac{\partial^{m} M}{\partial z^m} (z)}},
\end{align}
where $u_z^{(n,m)}$ are univariate linear functionals, i.e., we are dealing with a Sobolev sesquilinear form.

A particularly relevant example is
          \begin{align}%\label{unicase}
             \prodint {L(z),M(z)}_u=\prodint{u_z,L(z)\overline{M(z)}}
             \end{align}
that when $\operatorname{supp}u\subset \mathbb T$ gives
$g_{n,m}=\prodint{u_z,z^{n-m}}$,
           which happens to be a  Toeplitz matrix, $g_{n,m}=g_{n+1,m+1}$. We will refer to this case as the Toeplitz case.

Following \cite{CMV,watkins}, we will use the  CMV   basis $\big\{\chi^{(0)},\chi^{(1)},\chi^{(2)},\dots\big\}$ with $\chi^{(l)}(z)=\begin{cases}
z^k, &l=2k,\\
z^{-k-1}, &l=2k+1.
\end{cases}$

\begin{defi}
Let us consider
\begin{align*}
\chi_{1}(z)&:= [1,0,z,0,z^{2},0,\ldots]^\top, &
\chi_{2}(z)&:= [0,1,0,z,0,z^{2},0,\dots]^{\top},\\
\chi_{1}^{*}(z)&:=z^{-1}\chi_{1}(z^{-1})=[z^{-1},0,z^{-2},0,z^{-3},0,\ldots]^\top,&
\chi_{2}^{*}(z)&:=z^{-1}\chi_{2}(z^{-1})= [0,z^{-1},0,z^{-2},0,z^{-3},0,\dots]^{\top}.
\end{align*}
In terms of which we define the CMV sequences
\begin{align*}
\chi(z)&:=\chi_{1}(z)+\chi_{2}^{*}(z)=[1,z^{-1},z,z^{-2},\ldots]^{\top},&
\chi^{*}(z)&:=\chi_{1}^{*}(z)+\chi_{2}(z)=[z^{-1},1,z^{-2},z,z^{-3},z^{2},\dots]^{\top}.
\end{align*}
We also consider the semi-infinite matrix
\begin{align*}\Upsilon&:=\left[
\begin{array}{c|cc|cc|cc|cc|cc}
0 & 0 & 1 & 0 & 0 & 0 & 0 & 0 & 0 & 0 &\cdots  \\\hline
1 & 0 & 0 & 0 & 0 & 0 & 0 & 0 & 0 & 0 &\cdots  \\
0 & 0 & 0 & 0 & 1 & 0 & 0 & 0 & 0 & 0 &\cdots  \\\hline
0 & 1 & 0 & 0 & 0 & 0 & 0 & 0 & 0 & 0 &\cdots  \\
0 & 0 & 0 & 0 & 0 & 0 & 1 & 0 & 0 & 0 &\cdots  \\\hline
0 & 0 & 0 & 1 & 0 & 0 & 0 & 0 & 0 & 0 &\cdots  \\
0 & 0 & 0 & 0 & 0 & 0 & 0 & 0 & 1 & 0 &\cdots  \\\hline
0 & 0 & 0 & 0 & 0 & 1 & 0 & 0 & 0 & 0 &\cdots  \\
0 & 0 & 0 & 0 & 0 & 0 & 0 & 0 & 0 & 0 &\cdots  \\\hline
0 & 0 & 0 & 0 & 0 & 0 & 0 & 1 & 0 & 0 &\cdots  \\
\vdots & \vdots & \vdots & \vdots & \vdots & \vdots &
\vdots & \vdots & \vdots & \vdots &\ddots
\end{array}\right].
\end{align*}
Given a bivariate linear functional $u_{z_1,\bar z_2}$ and the associated sesquilinear form  we consider the corresponding Gram matrix
\begin{align*}
G=\prodint{\chi(z_1),(\chi(z_2))^\top}_u=\prodint{u_{z_1,\bar z_2},\chi(z_1)\otimes \big(\chi(z_2)\big)^\dagger}.
\end{align*}
\end{defi}

For the Toeplitz scenario    this Gram matrix has the following moment matrix form
$G=\prodint{u_z,\chi(z)(\chi(z))^\dagger}$, $\operatorname{supp}(u)\subset\mathbb T$. If  $u_z$ is a real functional

\begin{pro}
The semi-infinite matrix $\Upsilon$, is unitary
$	\Upsilon^{\top}=\Upsilon^{-1}$,
	and has the important sepctral porpertities
$	\Upsilon \chi(z)=z\chi(z)$ and $ \Upsilon^{-1}\chi(z)=z^{-1}\chi(z)$.
\end{pro}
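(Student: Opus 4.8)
The statement claims three facts about the semi-infinite matrix $\Upsilon$: that it is unitary in the real (orthogonal) sense $\Upsilon^\top = \Upsilon^{-1}$, and that it implements multiplication by $z$ and by $z^{-1}$ on the CMV vector $\chi(z)$. The plan is to treat these three claims in turn, the second and third being essentially immediate once the first is established.

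First I would verify $\Upsilon^\top \Upsilon = I$ (equivalently, since $\Upsilon$ has nonnegative integer entries with a single $1$ in each row, that $\Upsilon$ is a permutation matrix). The cleanest route is to read off from the displayed matrix the permutation $\pi$ of $\{0,1,2,\dots\}$ it encodes: row $l$ has its unique $1$ in column $\pi(l)$. Inspecting the pattern of the array, one sees $\pi$ sends $0\mapsto 2$, $1\mapsto 0$, $2\mapsto 4$, $3\mapsto 1$, $4\mapsto 6$, $5\mapsto 3$, $6\mapsto 8$, $7\mapsto 5,\dots$ — that is, $\pi(2k)=2k+2$ and $\pi(2k+1)=2k-1$ for $k\geq 1$, with $\pi(1)=0$. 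This $\pi$ is a bijection of $\N$ (it is the standard "shift the evens up by two, shift the odds down by two, wrapping $1$ to $0$" map underlying the CMV ordering), hence $\Upsilon$ is a permutation matrix and $\Upsilon^\top=\Upsilon^{-1}$ follows from $(\Upsilon)_{l,m}=\delta_{m,\pi(l)}$ together with $\Upsilon^\top\Upsilon = I$, $\Upsilon\Upsilon^\top=I$ being the two statements $\pi$ is injective / surjective. Care is needed here because $\Upsilon$ is semi-infinite: I must check that both one-sided inverses exist, i.e. that $\pi$ is a genuine bijection of $\N$ and not merely injective, which is the one place the argument is not purely formal.

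Next, for the spectral property $\Upsilon\chi(z)=z\chi(z)$: by the definition of matrix-vector product, the $l$-th entry of $\Upsilon\chi(z)$ is $\chi^{(\pi(l))}(z)$. So I must check the identity of scalar Laurent polynomials $\chi^{(\pi(l))}(z) = z\,\chi^{(l)}(z)$ for every $l\geq 0$. Using $\chi^{(2k)}(z)=z^k$ and $\chi^{(2k+1)}(z)=z^{-k-1}$: for $l=2k$ with $k\geq 1$ we need $\chi^{(2k+2)}(z)=z^{k+1}=z\cdot z^k = z\chi^{(2k)}(z)$, which holds; for $l=2k+1$ with $k\geq 1$ we need $\chi^{(2k-1)}(z)=z^{-k}=z\cdot z^{-k-1}=z\chi^{(2k+1)}(z)$, which holds; and for the boundary case $l=1$, $\chi^{(\pi(1))}(z)=\chi^{(0)}(z)=1=z\cdot z^{-1}=z\chi^{(1)}(z)$, which holds. (For $l=0$, $\pi(0)=2$ gives $\chi^{(2)}(z)=z = z\cdot 1 = z\chi^{(0)}(z)$.) This exhausts all cases. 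Then $\Upsilon^{-1}\chi(z)=z^{-1}\chi(z)$ is obtained immediately by applying $\Upsilon^{-1}=\Upsilon^\top$ to both sides of $\Upsilon\chi(z)=z\chi(z)$ and multiplying by $z^{-1}$.

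The main obstacle is purely bookkeeping: correctly decoding the permutation $\pi$ from the block structure of the displayed $\Upsilon$ (the horizontal rules group rows in a way that can obscure which column each $1$ sits in) and handling the boundary rows $l=0,1$ separately from the generic pattern, while making sure the bijectivity of $\pi$ on the semi-infinite index set is actually argued rather than assumed. Once $\pi$ is pinned down, every remaining step is a one-line check of monomial identities.
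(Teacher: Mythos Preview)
Your argument is correct. The paper states this proposition without proof, treating it as an immediate verification from the explicit form of $\Upsilon$ and the definition of the CMV basis $\chi^{(l)}$; your write-up simply makes that verification explicit by identifying $\Upsilon$ as the permutation matrix of the bijection $\pi$ and checking $\chi^{(\pi(l))}(z)=z\chi^{(l)}(z)$ case by case. One minor remark: your formula $\pi(2k)=2k+2$ already holds for $k=0$, so only the odd branch genuinely needs the separate boundary case $\pi(1)=0$.
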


For the truncation of  semi-infinity matrices we will use the following notation
$A=\begin{bsmallmatrix}
A_{0,0} & A_{0,1} &\dots\\
A_{1,0} & A_{1,1} &\dots\\
\vdots & \vdots & \\
\end{bsmallmatrix}$ and $A^{\left[l\right]}:=
\begin{bsmallmatrix}
A_{0,0} & A_{0,1} & \cdots & A_{0,l-1} \\
A_{1,0} & A_{1,1} & \cdots & A_{1,l-1} \\
\vdots &  \vdots&  &  \vdots\\
A_{l-1,0} & A_{l-1,1} & \cdots & A_{l-1,l-1} \\
\end{bsmallmatrix}$
and also, for the corresponding block structure, we will write
$
A=
\left[
\begin{array}{c|c}
	A^{\left[l\right]} &
	A^{\left[l,\geq l\right]}\\
	\hline
	A^{\left[\geq l,l\right]} & A^{\left[\geq l\right]}
	\end{array}
	\right]$.

In this paper we assume that the Gram matrix $G$  is  quasidefinite, i.e.,  all its principal minors are not zero, so that  the following  Gauss--Borel o $LU$ factorization of  $G$ holds
\begin{align}\label{LU}
G=S_{1}^{-1}H(S_{2}^{-1})^{\dag},
\end{align}
where $S_{1}$ and $S_{2}$ are lower unitriangular matrices and $H$ is a diagonal matrix with no zeros at the diagonal.
\begin{defi}Let us introduce the following vectors of Laurent poynomials
\begin{align}\label{P}
\phi_{1}(z)&:=S_{1}\chi(z), &
\phi_{2}(z)&:=S_{2}\chi(z).
\end{align}
\end{defi}
Its components $\phi_1(z)=[\phi_{1,0}(z), \phi_{1,1}(z),\dots]^\top$ and $\phi_2(z)=[\phi_{2,0}(z), \phi_{2,1}(z),\dots]^\top$ are such that
\begin{pro}[Biorthogonal polynomials]
The following biothogonality conditions hold
\begin{align*}
\prodint{\phi_{1,n}(z_1),\phi_{2,m}(z_2)}_u&=\delta_{n,m}H_n, &n,m&\in\{0,1,2,\dots\}
\end{align*}
\end{pro}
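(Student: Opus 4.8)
The plan is to unravel the definitions and reduce the biorthogonality statement to the Gauss--Borel factorization \eqref{LU}. First I would compute the bilinear form applied to the whole vectors at once, exploiting sesquilinearity to pull the triangular matrices $S_1,S_2$ outside the bracket: since $\phi_1(z)=S_1\chi(z)$ and $\phi_2(z)=S_2\chi(z)$ with $S_1,S_2$ constant matrices, conditions i) and ii) of Definition \ref{def:sesquilinear} give
\begin{align*}
\prodint{\phi_1(z_1),\big(\phi_2(z_2)\big)^\top}_u
= S_1\,\prodint{\chi(z_1),\big(\chi(z_2)\big)^\top}_u\,S_2^{\dagger}
= S_1\,G\,S_2^{\dagger},
\end{align*}
where the conjugation on the second slot produces $S_2^{\dagger}$ rather than $S_2^{\top}$, matching the sesquilinear (antilinear in the second argument) convention already used in the definition of $G$. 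Here I am reading $\prodint{\phi_1(z_1),(\phi_2(z_2))^\top}_u$ entrywise, so that the $(n,m)$ entry is precisely $\prodint{\phi_{1,n}(z_1),\phi_{2,m}(z_2)}_u$, the quantity we must identify.

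Next I would substitute the factorization $G=S_1^{-1}H(S_2^{-1})^{\dagger}$ into the right-hand side. This yields
\begin{align*}
S_1\,G\,S_2^{\dagger}=S_1\,S_1^{-1}\,H\,(S_2^{-1})^{\dagger}\,S_2^{\dagger}
=H\,\big(S_2\,S_2^{-1}\big)^{\dagger}=H,
\end{align*}
using that $(S_2^{-1})^{\dagger}S_2^{\dagger}=(S_2 S_2^{-1})^{\dagger}=\mathrm{Id}$. Since $H=\diag(H_0,H_1,H_2,\dots)$, reading off the $(n,m)$ entry gives exactly $\prodint{\phi_{1,n}(z_1),\phi_{2,m}(z_2)}_u=\delta_{n,m}H_n$, which is the claim.

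The only genuine subtlety, and the step I would be most careful about, is the interchange of the (bi)infinite matrix multiplications with the sesquilinear form — i.e. justifying that $\prodint{S_1\chi(z_1),(S_2\chi(z_2))^\top}_u=S_1 G S_2^{\dagger}$ as formal semi-infinite matrices. Because $S_1$ and $S_2$ are lower unitriangular, each row of $\phi_i(z)$ is a \emph{finite} linear combination of the $\chi^{(l)}(z)$, so every individual entry $\prodint{\phi_{1,n}(z_1),\phi_{2,m}(z_2)}_u$ is a finite double sum of entries of $G$ and there is no convergence issue: the manipulation is legitimate entry by entry, and linearity/antilinearity of $\prodint{\cdot,\cdot}_u$ in each argument is exactly conditions i)--ii). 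Likewise, products of lower triangular (or diagonal) semi-infinite matrices are well defined entrywise, so the cancellation $S_1 S_1^{-1}=\mathrm{Id}$ and $(S_2^{-1})^\dagger S_2^\dagger=\mathrm{Id}$ poses no problem. I would state this finiteness remark explicitly and then present the two-line computation above as the proof.
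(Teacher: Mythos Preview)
Your proof is correct and is exactly the argument the paper has in mind: the proposition is stated without proof there, precisely because it follows immediately from the factorization \eqref{LU} via the two-line computation $S_1 G S_2^{\dagger}=S_1\,S_1^{-1}H(S_2^{-1})^{\dagger}S_2^{\dagger}=H$ that you give. Your explicit remark that lower unitriangularity makes every entry a finite sum (so no convergence issues arise) is a nice addition that the paper leaves implicit.
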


\begin{coro}\label{ortogonalidad}
The orthogonality relations are satisfied
 \begin{align*}
 \prodint{ \phi_{1,2k}(z_1),(z_2)^l}_u&=0, &  -k\leq & l \leq k-1, \\
 \prodint{ \phi_{1,2k+1}(z_1),(z_2)^l} _u&=0, &  -k\leq & l \leq k, \\
\prodint{  (z_1)^l,\phi_{2,2k}(z_2) }_u&=0, &  -k\leq & l \leq k-1, \\
\prodint{  (z_1)^l,\phi_{2,2k+1}(z_2) }_u&=0, &  -k\leq & l \leq k.
 \end{align*}
\end{coro}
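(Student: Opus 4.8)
The plan is to transfer the biorthogonality relations of the preceding proposition to the monomial basis, using only that $S_1,S_2$ are lower unitriangular and the explicit shape of the CMV basis. First I would record an elementary linear-algebra fact: since $S_1$ and $S_2$ are lower unitriangular, each $\phi_{1,n}(z)$ and each $\phi_{2,n}(z)$ is a linear combination of $\chi^{(0)}(z),\dots,\chi^{(n)}(z)$; conversely, $S_1^{-1}$ and $S_2^{-1}$ are again lower unitriangular, so every $\chi^{(j)}(z)$ is a linear combination of $\phi_{1,0}(z),\dots,\phi_{1,j}(z)$, and likewise of $\phi_{2,0}(z),\dots,\phi_{2,j}(z)$. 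Hence, for each $j$, the linear spans of $\{\chi^{(0)},\dots,\chi^{(j)}\}$, of $\{\phi_{1,0},\dots,\phi_{1,j}\}$, and of $\{\phi_{2,0},\dots,\phi_{2,j}\}$ all coincide.

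Next I would unwind the CMV ordering $\chi^{(2k)}(z)=z^{k}$, $\chi^{(2k+1)}(z)=z^{-k-1}$. Splitting $\{\chi^{(0)},\dots,\chi^{(2k-1)}\}$ into even and odd indices gives the monomials $\{z^{0},\dots,z^{k-1}\}$ and $\{z^{-1},\dots,z^{-k}\}$, so this set spans $\{z^{l}:-k\leq l\leq k-1\}$; adjoining $\chi^{(2k)}=z^{k}$ shows $\{\chi^{(0)},\dots,\chi^{(2k)}\}$ spans $\{z^{l}:-k\leq l\leq k\}$. Combined with the first paragraph: $(z_2)^{l}$ with $-k\leq l\leq k-1$ lies in the span of $\phi_{2,0}(z_2),\dots,\phi_{2,2k-1}(z_2)$, and $(z_2)^{l}$ with $-k\leq l\leq k$ lies in the span of $\phi_{2,0}(z_2),\dots,\phi_{2,2k}(z_2)$; symmetrically for $(z_1)^{l}$ in terms of the $\phi_{1,i}(z_1)$.

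Finally I would invoke $\prodint{\phi_{1,n}(z_1),\phi_{2,m}(z_2)}_u=\delta_{n,m}H_n$ together with linearity in the first slot and antilinearity in the second. For the first relation, writing $(z_2)^{l}=\sum_{i=0}^{2k-1}c_i\,\phi_{2,i}(z_2)$ when $-k\leq l\leq k-1$ yields $\prodint{\phi_{1,2k}(z_1),(z_2)^{l}}_u=\sum_{i=0}^{2k-1}\bar c_i\,\delta_{2k,i}H_{2k}=0$, since every index $i$ involved satisfies $i\leq 2k-1<2k$. The statement for $\phi_{1,2k+1}$ is identical with the range extended by the extra monomial $z^{k}$ (indices $i\leq 2k<2k+1$), and the two statements for $\phi_{2,2k}$, $\phi_{2,2k+1}$ are obtained the same way, expanding $(z_1)^{l}$ in the $\phi_{1,i}(z_1)$ and using linearity in the first variable. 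The only point that needs care is bookkeeping — matching the parity of the CMV index with the correct range of monomial exponents — there being no analytic difficulty once the span identities of the first paragraph are established.
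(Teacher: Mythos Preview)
Your proof is correct. The paper states this result as a corollary of the biorthogonality proposition without giving an explicit proof, and your argument is precisely the natural one implicit in that presentation: identify the span of $\{\chi^{(0)},\dots,\chi^{(j)}\}$ with the span of $\{\phi_{a,0},\dots,\phi_{a,j}\}$ via the lower unitriangularity of $S_a$, read off which monomials $z^l$ lie in the span of the first $2k$ (respectively $2k+1$) CMV basis elements, and conclude from biorthogonality together with sesquilinearity.
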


\begin{defi}
	The Christoffel--Darboux  kernel is
	\begin{align}\label{kernelChristoffeldefinicion}
	K^{[l]}(\bar z_1,z_2):=
	\sum_{k=0}^{l-1}\overline{\phi{_{2,k}}(z_1)}H_{k}^{-1}\phi_{1,k}(z_2)
	=[\phi_{2}(z_1)^{\dagger}]^{[l]}(H^{-1})^{[l]}[\phi_{1}(z_2)]^{[l]}.%
	\end{align}
%	We also introduce the derivatives
%	\begin{align*}
%	K^{[l;n,m]}:=\frac{\partial ^{n+m}K^{[l]}}{\partial \bar z_1^n\partial z_2^m}.
%	\end{align*}
\end{defi}
\begin{pro}
	The Christoffel--Darboux kernel satisfies the ABC theorem:
	$K^{[l]}(\bar z_1,z_2)=(\chi^{[l]}(z_1))^\dagger (G^{[l]})^{-1}\chi^{[l]}(z_2)$.
\end{pro}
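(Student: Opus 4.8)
The plan is to establish the ABC formula
\[
K^{[l]}(\bar z_1,z_2)=(\chi^{[l]}(z_1))^\dagger (G^{[l]})^{-1}\chi^{[l]}(z_2)
\]
by starting from the definition \eqref{kernelChristoffeldefinicion}, substituting the Gauss--Borel factorization \eqref{LU}, and matching everything at the level of the $l\times l$ truncations. First I would use the definition in its matrix form,
$K^{[l]}(\bar z_1,z_2)=[\phi_{2}(z_1)^{\dagger}]^{[l]}(H^{-1})^{[l]}[\phi_{1}(z_2)]^{[l]}$, and then rewrite the truncated blocks of $\phi_1$ and $\phi_2$ in terms of $S_1$, $S_2$ and $\chi$ via \eqref{P}. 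The key point is that $S_1$ and $S_2$ are lower unitriangular, so the leading principal submatrix of a product behaves well under truncation: $[S_i\chi(z)]^{[l]}=S_i^{[l]}\chi^{[l]}(z)$, because the lower-triangular structure means the first $l$ components of $S_i\chi(z)$ only involve the first $l$ components of $\chi(z)$ and the $l\times l$ top-left block $S_i^{[l]}$. This is the one structural fact that makes the truncation compatible with the factorization, and it is where I expect most of the care to be needed.

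With that observation in hand, the computation becomes
\[
K^{[l]}(\bar z_1,z_2)=(\chi^{[l]}(z_1))^\dagger (S_2^{[l]})^{\dagger}(H^{-1})^{[l]}S_1^{[l]}\chi^{[l]}(z_2),
\]
so it suffices to identify $(S_2^{[l]})^{\dagger}(H^{-1})^{[l]}S_1^{[l]}$ with $(G^{[l]})^{-1}$. For this I would take the factorization $G=S_1^{-1}H(S_2^{-1})^{\dagger}$, invert it to get $G^{-1}=S_2^{\dagger}H^{-1}S_1$, and then truncate. The subtlety is that truncation does not commute with inversion in general, but here it does for the same triangularity reason: since $S_1,S_2$ are lower unitriangular, so are $S_1^{-1},S_2^{-1}$, hence $S_2^{\dagger}$ is upper unitriangular and $S_1$ is lower unitriangular, and the product $S_2^{\dagger}H^{-1}S_1$ has its top-left $l\times l$ block equal to $(S_2^{[l]})^{\dagger}(H^{-1})^{[l]}S_1^{[l]}$. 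Equivalently, one argues directly that $G^{[l]}=(S_1^{-1})^{[l]}H^{[l]}((S_2^{-1})^{[l]})^{\dagger}=(S_1^{[l]})^{-1}H^{[l]}((S_2^{[l]})^{-1})^{\dagger}$, using that the inverse of a lower unitriangular matrix truncates as $(S_i^{-1})^{[l]}=(S_i^{[l]})^{-1}$; inverting this $l\times l$ identity immediately gives $(G^{[l]})^{-1}=(S_2^{[l]})^{\dagger}(H^{[l]})^{-1}S_1^{[l]}$, and $(H^{[l]})^{-1}=(H^{-1})^{[l]}$ since $H$ is diagonal.

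Substituting this back yields precisely the claimed identity. The main obstacle, such as it is, is purely bookkeeping: making the statements about how truncation interacts with products and inverses of (uni)triangular matrices rigorous, i.e. verifying $[AB]^{[l]}=A^{[l]}B^{[l]}$ when $A$ is lower triangular (or when $B$ is upper triangular) and $[A^{-1}]^{[l]}=(A^{[l]})^{-1}$ when $A$ is lower triangular. These are standard block-matrix facts — they follow from writing $A$ and $B$ in the $2\times2$ block form $\left[\begin{smallmatrix}A^{[l]}&\ast\\ \ast&\ast\end{smallmatrix}\right]$ with the appropriate zero blocks forced by triangularity — and once they are recorded the proof is a two-line substitution. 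I would also note that the quasidefiniteness hypothesis is exactly what guarantees $G^{[l]}$ is invertible and that $H^{[l]}$ has no zero diagonal entries, so every inverse written above makes sense.
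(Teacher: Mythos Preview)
Your argument is correct and is the standard proof of the ABC (Aitken--Berg--Collar) identity via the Gauss--Borel factorization; the key bookkeeping facts you isolate about truncations of products and inverses of lower unitriangular matrices are exactly what is needed. The paper in fact states this proposition without proof, so there is nothing to compare against --- your write-up supplies the missing details cleanly.
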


\begin{pro}
The  Christoffel--Darboux kernel satisfies the projection   properties
\begin{align*}
\prodint{
	\sum_{j=0}^M f_j \phi_{1,j}(z_1),
	\overline{K^{[l}(\bar z_2,z)}
}_u&=	\sum_{j=0}^{l-1} f_j \phi_{1,j}(z),&
\prodint{K^{[l]}(\bar z,z_1),
	\sum_{j=0}^M f_j \phi_{2,j}(z_2)}_u&=\overline{\sum_{j=0}^{l-1} f_j \phi_{2,j}(z)}.
\end{align*}
\end{pro}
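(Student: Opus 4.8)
The plan is to substitute the spectral expansion \eqref{kernelChristoffeldefinicion} of the Christoffel--Darboux kernel into each pairing and then collapse the resulting finite double sum by means of the biorthogonality relation $\prodint{\phi_{1,n}(z_1),\phi_{2,m}(z_2)}_{u}=\delta_{n,m}H_{n}$, taking care of the conjugations forced by the antilinearity of the second slot of $\prodint{\cdot,\cdot}_{u}$. For the first identity I would first record, treating $z$ as a passive parameter,
\begin{align*}
\overline{K^{[l]}(\bar z_2,z)}=\sum_{k=0}^{l-1}\phi_{2,k}(z_2)\,\overline{H_{k}^{-1}\phi_{1,k}(z)},
\end{align*}
which is a Laurent polynomial in $z_2$ (the $z$-dependent factor being only a scalar coefficient), hence a legitimate second entry of $\prodint{\cdot,\cdot}_{u}$. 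Both arguments being finite linear combinations of the $\phi_{1,j}$ and of the $\phi_{2,k}$, linearity in the first slot and antilinearity in the second (which precisely undoes the conjugation of the coefficient $\overline{H_{k}^{-1}\phi_{1,k}(z)}$) yield
\begin{align*}
\prodint{\textstyle\sum_{j=0}^{M}f_{j}\phi_{1,j}(z_1),\,\overline{K^{[l]}(\bar z_2,z)}}_{u}
=\sum_{j=0}^{M}\sum_{k=0}^{l-1}f_{j}\,H_{k}^{-1}\phi_{1,k}(z)\,\prodint{\phi_{1,j}(z_1),\phi_{2,k}(z_2)}_{u}.
\end{align*}
Now $\prodint{\phi_{1,j}(z_1),\phi_{2,k}(z_2)}_{u}=\delta_{j,k}H_{k}$ collapses the double sum: the $k$-summation removes the Kronecker delta and $H_{k}$ cancels $H_{k}^{-1}$, leaving $\sum_{j=0}^{\min(M,l-1)}f_{j}\phi_{1,j}(z)$, which is the claimed $\sum_{j=0}^{l-1}f_{j}\phi_{1,j}(z)$ once $M\ge l-1$ (and in general the surplus terms do not contribute).

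The second identity follows by the mirror computation. This time $K^{[l]}(\bar z,z_1)=\sum_{k=0}^{l-1}\overline{\phi_{2,k}(z)}\,H_{k}^{-1}\phi_{1,k}(z_1)$ occupies the \emph{first} (linear) slot as a Laurent polynomial in $z_1$: linearity there extracts the coefficients $\overline{\phi_{2,k}(z)}\,H_{k}^{-1}$ and the $k$-sum, antilinearity in the second slot extracts $\bar f_{j}$ and the $j$-sum, and biorthogonality again furnishes $\delta_{k,j}H_{k}$, so that
\begin{align*}
\prodint{K^{[l]}(\bar z,z_1),\,\textstyle\sum_{j=0}^{M}f_{j}\phi_{2,j}(z_2)}_{u}
=\sum_{j=0}^{\min(M,l-1)}\overline{\phi_{2,j}(z)}\,\bar f_{j}
=\overline{\sum_{j=0}^{l-1}f_{j}\phi_{2,j}(z)},
\end{align*}
reading the sum as $\sum_{j=0}^{l-1}$ under $M\ge l-1$ as before.

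Everything here is routine; the only point requiring attention is the conjugation bookkeeping, that is, keeping straight which slot of $\prodint{\cdot,\cdot}_{u}$ is antilinear and verifying that the conjugate scalar coefficient inside $\overline{K^{[l]}(\bar z_2,z)}$ (respectively the complex numbers $f_{j}$ in the second identity) is exactly compensated, together with noting the harmless truncation $\min(M,l-1)$. As an alternative route one could instead start from the ABC theorem $K^{[l]}(\bar z_1,z_2)=(\chi^{[l]}(z_1))^{\dagger}(G^{[l]})^{-1}\chi^{[l]}(z_2)$ together with $\phi_{i}=S_{i}\chi$ and the factorization \eqref{LU}, reducing the projection property to the statement that $(G^{[l]})^{-1}$ inverts $G^{[l]}$ on the truncated spaces; but the direct computation from \eqref{kernelChristoffeldefinicion} is the shortest.
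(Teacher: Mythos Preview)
Your proof is correct and follows essentially the same approach as the paper: substitute the expansion \eqref{kernelChristoffeldefinicion} of $K^{[l]}$ into the pairing, use sesquilinearity to pull out the scalar coefficients, and collapse the double sum via biorthogonality $\prodint{\phi_{1,j},\phi_{2,k}}_u=\delta_{j,k}H_k$. The paper's proof is terser (it skips directly from the substitution to the final answer), whereas you spell out the conjugation bookkeeping and the $\min(M,l-1)$ truncation explicitly, but the argument is the same.
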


\begin{proof}
If we have an expansion of the form
$\sum_{j=0}^M f_j \phi_{1,j}(z)$
then
\begin{align*}
\prodint{
	\sum_{j=0}^M f_j \phi_{1,j}(z_1),
	\overline{K^{[l]}(\bar z_2,z)}
		}_u&=
	\prodint{
		\sum_{j=0}^M f_j\phi_{1,j}(z_1),
		\sum_{k=0}^{l-1}\phi{_{2,k}}(z_2)\bar H_{k}^{-1}\overline{\phi_{1,k}(z)}
		}_u
%	\\
%	&=	\sum_{j=0}^M	\sum_{k=0}^{l-1} f_j 	\prodint{
%	\phi_{1,j}(z_1),
%	\phi{_{2,k}}(z_2)}_u H_{k}^{-1}\phi_{1,k}(z)
\\&=	\sum_{j=0}^{l-1} f_j \phi_{1,j}(z).
\end{align*}
Analogously,
\begin{align*}
\prodint{K^{[l]}(\bar z,z_1),
	\sum_{j=0}^M f_j \phi_{2,j}(z_2)}_u&=\prodint{\sum_{k=0}^{l-1}\overline{\phi{_{2,k}}(z)}H_{k}^{-1}\phi_{1,k}(z_1),	\sum_{j=0}^M f_j \phi_{2,j}(z_2)}_u
%\\
%&=\sum_{k=0}^{l-1}\sum_{j=0}^MH_{k}^{-1}\bar f_j
%\overline{\phi_{2,k}(z)}
%\prodint{
%	{\phi_{1,k}(z_1)},	 \phi_{2,j}(z_2)
%		}_u
	\\
&=\overline{\sum_{j=0}^{l-1} f_j \phi_{2,j}(z)}.
\end{align*}
\end{proof}
This simply says  that, when acting on the right  $\overline{K^{[l+1]}(\bar z_2,z)}$ projects  over $\Lambda_{l}:=\mathbb C\{\chi^{(k)}(z)\}_{k=0}^{l}$ while when acting on the left  $K^{[l+1]}(\bar z,z_1)$ projects on $\overline{\Lambda_l}$. Notice that
$\Lambda_{2k}=\{1,z^{-1},z,\dots,z^{-k},z^k\}$ and $\Lambda_{2k+1}=\{1,z^{-1},z,\dots,z^k,z^{-k-1}\}$.
\begin{coro}\label{CDproyeccion}
	If $L(z)\in\Lambda_l:=\mathbb C\{\chi^{(k)}(z)\}_{k=0}^{l-1}$  then
$
	\prodint{
		L(z_1),
		\overline{K^{[l]}(\bar z_2,z)}
	}_u=	L(z)$ and $	\prodint{K^{[l]}(\bar z,z_1),
		L(z_2)}_u=\overline{L(z)}$.
\end{coro}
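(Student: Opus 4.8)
The plan is to derive Corollary \ref{CDproyeccion} as an immediate specialization of the projection properties established in the preceding Proposition. Since $L(z)\in\Lambda_l=\mathbb C\{\chi^{(k)}(z)\}_{k=0}^{l-1}$, I would first write $L(z)=\sum_{j=0}^{l-1} f_j\chi^{(j)}(z)$ for suitable coefficients $f_j\in\mathbb C$. Because $S_1$ is lower unitriangular, the map $\chi^{[l]}(z)\mapsto\phi_1^{[l]}(z)$ is invertible, so the same space $\Lambda_l$ is spanned by $\{\phi_{1,j}(z)\}_{j=0}^{l-1}$; hence we may equally write $L(z)=\sum_{j=0}^{l-1}\tilde f_j\phi_{1,j}(z)$ for some coefficients $\tilde f_j$, which is a finite sum of the type appearing in the Proposition with $M=l-1$.

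Next I would simply invoke the Proposition with this expansion: since $M=l-1<l$, the truncation $\sum_{j=0}^{\min(M,l-1)}\tilde f_j\phi_{1,j}(z)$ equals the full sum $\sum_{j=0}^{l-1}\tilde f_j\phi_{1,j}(z)=L(z)$, giving $\prodint{L(z_1),\overline{K^{[l]}(\bar z_2,z)}}_u=L(z)$. For the second identity I would use the same expansion of $L$ in the $\phi_{1,j}$ basis, but this requires care: the second projection property of the Proposition is stated for expansions in the $\phi_{2,j}$ family, so I would instead expand $L(z)=\sum_{j=0}^{l-1} h_j\phi_{2,j}(z)$ using invertibility of $S_2$, and then apply the left-action formula to conclude $\prodint{K^{[l]}(\bar z,z_1),L(z_2)}_u=\overline{\sum_{j=0}^{l-1}h_j\phi_{2,j}(z)}=\overline{L(z)}$.

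Alternatively, and perhaps more cleanly, I would bypass the biorthogonal expansions entirely and use the ABC theorem $K^{[l]}(\bar z_1,z_2)=(\chi^{[l]}(z_1))^\dagger(G^{[l]})^{-1}\chi^{[l]}(z_2)$ together with the observation that for $L(z)\in\Lambda_l$ we have $L(z)=(c^{[l]})^\top\chi^{[l]}(z)$ where $c^{[l]}$ is the coordinate vector, so that $\prodint{L(z_1),\overline{K^{[l]}(\bar z_2,z)}}_u=(c^{[l]})^\top\prodint{\chi^{[l]}(z_1),(\chi^{[l]}(z_2))^\dagger}_u(G^{[l]})^{-1}\chi^{[l]}(z)=(c^{[l]})^\top G^{[l]}(G^{[l]})^{-1}\chi^{[l]}(z)=(c^{[l]})^\top\chi^{[l]}(z)=L(z)$, with the transpose/conjugate placement of the second slot handled by sesquilinearity.

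The only genuine subtlety — the main obstacle, though a mild one — is bookkeeping with the antilinearity in the second argument: one must track that $K^{[l]}$ carries conjugations in its first entry, so that $\overline{K^{[l]}(\bar z_2,z)}$ is the object one pairs against in the first slot, and correspondingly a bar appears on the right-hand side of the left-action identity. Everything else is a direct consequence of the already-proven Proposition (or the ABC theorem) together with the fact that $\Lambda_l$ is spanned by the first $l$ members of each biorthogonal family, which follows from the lower-unitriangularity of $S_1$ and $S_2$ in the factorization \eqref{LU}--\eqref{P}.
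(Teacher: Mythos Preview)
Your proposal is correct and matches the paper's intent: the corollary is stated without proof immediately after the projection-property Proposition, so it is meant to follow exactly as you describe, by expanding $L(z)\in\Lambda_l$ in the $\{\phi_{1,j}\}_{j=0}^{l-1}$ (respectively $\{\phi_{2,j}\}_{j=0}^{l-1}$) basis and applying the Proposition with $M=l-1$. Your alternative via the ABC theorem is also valid but goes slightly beyond what the paper does.
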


%\begin{coro}
%		The  Christoffel--Darboux satisfy
%		\begin{align*}
%		\prodint{
%			\sum_{j=0}^M f_j \phi_{1,j}(z_1),
%			\overline{K^{[l;0,n]}(\bar z_2,z)}
%		}_u&=	\sum_{j=0}^{l-1} f_j \phi^{(n)}_{1,j}(z),&
%		\prodint{K^{[l;n,0]}(\bar z,z_1),
%			\sum_{j=0}^M f_j \phi_{2,j}(z_2)}_u&=\overline{\sum_{j=0}^{l-1} f_j \phi^{(n)}_{2,j}(z)}.
%		\end{align*}
%	If $L(z)\in\Lambda_l:=\mathbb C\{\chi^{(k)}(z)\}_{k=0}^{l-1}$  then
%	\begin{align}\label{poject der}
%	\prodint{
%		L(z_1),
%		\overline{K^{[l;0,n]}(\bar z_2,z)}
%	}_u&=	L^{(n)}(z),&
%	\prodint{K^{[l;n,0]}(\bar z,z_1),
%	L(z_2)}_u&=\overline{L^{(n)}(z)}.
%	\end{align}
%\end{coro}

\begin{defi} The Gram partial second kind functions are defined by
	\begin{align}\label{alfre15}
	C_{1,1}(z)&:=H(S_{2}^{-1})^{\dagger}\chi_{1}^{*}(z),&
	C_{1,2}(z)&:=H(S_{2}^{-1})^{\dagger}\chi_{2}(z),\\
	(C_{2,1}(z))^\dagger&:=(\chi_1^*(z))^\dagger (S_1)^{-1}H,&
	(C_{2,2}(z))^\dagger&:=(\chi_{2}(z))^\dagger (S_1)^{-1}H.
	\end{align}
\end{defi}

\begin{pro}
The following relations hold true
	\begin{align*}
	C_{1,1}(z)&=\prodint{\phi_1(z_1), \frac{1}{\bar z-z_2}}_u, &|z|&>\sup_{z_2\in\operatorname{supp}_2u}|z_2|,\\ C_{1,2}(z)&=-\prodint{\phi_1(z_1), \frac{1}{\bar z-z_2}}_u,&|z|&<\inf_{z_2\in\operatorname{supp}_2u}|z_2|,\\
	(	C_{2,1}(z))^\dagger&=\prodint{\frac{1}{\bar z-z_1}, (\phi_2(z_2))^\top}_u,& |z|&>\sup_{z_1\in\operatorname{supp}_1u}|z_1|,\\ (C_{2,2}(z))^\dagger&=-\prodint{\frac{1}{\bar z-z_1}, (\phi_2(z_2))^\top}_u,&|z|&<\inf_{z_1\in\operatorname{supp}_1u}|z_1|,
	\end{align*}
in the indicated disks, which are  the domains of definition of these functions.
\end{pro}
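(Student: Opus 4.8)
The plan is to expand the second kind functions in their defining series and recognize the geometric series expansion of the Cauchy kernel $\tfrac{1}{\bar z - z_2}$, then use biorthogonality to collapse the sums. First I would recall from Definition~\eqref{alfre15} that $C_{1,1}(z)=H(S_2^{-1})^{\dagger}\chi_1^*(z)$ with $\chi_1^*(z)=[z^{-1},0,z^{-2},0,z^{-3},0,\dots]^\top$, so that the components of $\chi_1^*(z)$ are exactly the monomials $z^{-k-1}$, $k\ge 0$, placed in the even slots. For $|z|$ larger than $\sup_{z_2\in\operatorname{supp}_2 u}|z_2|$ the geometric expansion $\tfrac{1}{\bar z-z_2}=\sum_{k\ge 0}\bar z^{-k-1} z_2^{k}$ converges uniformly on $\operatorname{supp}_2 u$, so that, conjugating, $\overline{\tfrac{1}{\bar z-z_2}}=\tfrac{1}{z-\bar z_2}=\sum_{k\ge 0} z^{-k-1}\,\overline{z_2}^{\,k}$; plugging this into $\prodint{\phi_1(z_1),\tfrac{1}{\bar z-z_2}}_u$ and using continuity of the sesquilinear form to exchange sum and pairing, I would get $\sum_{k\ge 0} z^{-k-1}\prodint{\phi_1(z_1),(z_2)^k}_u$.

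Next I would identify $\prodint{\phi_1(z_1),(z_2)^k}_u$ with the appropriate column of $H(S_2^{-1})^{\dagger}$. Since $\phi_1(z)=S_1\chi(z)$ and $\phi_2(z)=S_2\chi(z)$, and $G=\prodint{\chi(z_1),(\chi(z_2))^\top}_u=S_1^{-1}H(S_2^{-1})^{\dagger}$, one has $\prodint{\phi_1(z_1),(\chi(z_2))^\top}_u=S_1 G = H(S_2^{-1})^{\dagger}$. Writing $(z_2)^k=\chi^{(2k)}(z_2)$ and expressing the monomial $(z_2)^k$ as a (finite) linear combination of CMV basis elements $\chi^{(l)}(z_2)$ — indeed $z^k$ and $z^{-k-1}$ are themselves CMV basis elements — the pairing $\prodint{\phi_1(z_1),(z_2)^k}_u$ is simply the column of $H(S_2^{-1})^{\dagger}$ indexed by $2k$. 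Summing $z^{-k-1}$ times this column over $k\ge 0$ reconstructs precisely $H(S_2^{-1})^{\dagger}\chi_1^*(z)=C_{1,1}(z)$, because $\chi_1^*(z)$ has $z^{-k-1}$ sitting in slot $2k$ and zeros elsewhere. This establishes the first identity; the second, $C_{1,2}(z)=-\prodint{\phi_1(z_1),\tfrac{1}{\bar z-z_2}}_u$ for $|z|<\inf|z_2|$, follows the same way using instead the expansion $\tfrac{1}{\bar z-z_2}=-\sum_{k\ge 0}\bar z^{k} z_2^{-k-1}$, valid in that disk, whose conjugated powers $z^k$ land in the odd slots matching $\chi_2(z)=[0,1,0,z,0,z^2,\dots]^\top$. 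The two remaining identities for $(C_{2,1}(z))^\dagger$ and $(C_{2,2}(z))^\dagger$ are obtained by the mirror-image argument, expanding $\tfrac{1}{\bar z-z_1}$ in powers of $z_1$ (resp. $z_1^{-1}$), using $\prodint{(\chi(z_1))^\top \text{-pairing}}$ on the left slot, and the factorization $G^{\dagger}$-side relation $\prodint{\chi(z_1),(\phi_2(z_2))^\top}_u = G (S_2)^{\dagger}= S_1^{-1}H$.

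The claim about the domains of definition is immediate once the geometric series manipulation is justified: the series $\sum_k z^{-k-1}(z_2)^k$ (and its three analogues) converges absolutely and uniformly for $z_2$ in the relevant support and $|z|$ in the stated disk, so the function defined by the pairing is holomorphic there, while the algebraic object $C_{1,1}(z)$ etc. is visibly a formal power series whose convergence region is exactly that disk; since both sides agree as formal series they agree as holomorphic functions on the common domain. The main obstacle, and the only point needing genuine care, is the interchange of the infinite sum with the sesquilinear form: this is where the continuity hypothesis on $\prodint{\cdot,\cdot}$ (Definition~\ref{def:sesquilinear}) together with the compact-support structure of $u$ (so that $u$ extends to $\mathcal E^*=C^\infty(\mathbb C^*)$ and the uniform limit of the truncated geometric series on $\operatorname{supp}_2 u$ is legitimate) must be invoked. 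Once that swap is licensed, everything else is bookkeeping about which monomial sits in which CMV slot.
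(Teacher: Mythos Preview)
Your argument is correct and follows essentially the same route as the paper: both rewrite $C_{1,1}(z)=H(S_2^{-1})^{\dagger}\chi_1^*(z)=S_1G\,\chi_1^*(z)=\prodint{\phi_1(z_1),(\chi(z_2))^\top}_u\,\chi_1^*(z)$, recognize $(\chi(z_2))^\top\overline{\chi_1^*(z)}=\tfrac{1}{\bar z-z_2}$ as a uniformly convergent geometric series on the stated disk, and invoke continuity of the sesquilinear form to pass the sum inside. Your only cosmetic slip is the opening mention of ``biorthogonality to collapse the sums'' --- you never actually use biorthogonality, only the factorization $\prodint{\phi_1,(\chi)^\top}_u=S_1G=H(S_2^{-1})^{\dagger}$, which is exactly what the paper does.
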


\begin{proof}
We can write
	\begin{align*}
C_{1,1}(z)&=S_1G{\chi}{_1^{*}}(z)=\prodint{\phi_1(z_1), (\chi(z_2))^\top}_u\chi_1^*(z),\\ C_{1,2}(z)&=S_1G{\chi}{_2}(z)=\prodint{\phi_1(z_1), (\chi(z_2))^\top}_u\chi_2(z),\\
(C_{2,1}(z))^\dagger&=(\chi_1^*(z))^\dagger G (S_2)^\dagger=(\chi_1^*(z))^\dagger\prodint{\chi(z_1), (\phi_2(z_2))^\top}_u,\\ (C_{2,2}(z))^\dagger&=(\chi_2(z))^\dagger G (S_2)^\dagger=(\chi_2(z))^\dagger\prodint{\chi(z_1), (\phi_2(z_2))^\top}_u,
	\end{align*}
but form the following uniform convergence
	\begin{align*}
	(\chi(z_2))^\top\overline{\chi_1^*(z)}&=\frac{1}{\bar z-z_2}, & |z|&>|z_2|,&
(\chi(z_2))^\top\overline{\chi_2(z)}&=-\frac{1}{\bar z-z_2}, & |z|&<|z_2|,\\
(\chi_1^*(z))^\dagger\chi(z_1)&=\frac{1}{\bar z-z_1}, & |z|&>|z_1|,&
(\chi_2(z))^\dagger\chi(z_1)&=-\frac{1}{\bar z-z_1}, & |z|&<|z_1|,
	\end{align*}
and the continuity of the sesquilinear forms we conclude the result.
\end{proof}
Using this result we extend the domain of definition of the Gram  functions of the second  kind to:
\begin{defi}
The Cauchy second kind functions are given by
	\begin{align*}
	C_{1}(z)&=\prodint{\phi_1(z_1), \frac{1}{\bar z-z_2}}_u
	=\prodint{u_{z_1,\bar z_2},\phi_1(z_1)\otimes \frac{1}{z-\bar z_2}}
	, &z&\not\in\overline{\operatorname{supp}_2(u)},\\
	(	C_{2}(z))^\dagger&=\prodint{\frac{1}{\bar z-z_1}, (\phi_2(z_2))^\top}_u=\prodint{u_{z_1,\bar z_2},\frac{1}{\bar z-z_1}\otimes(\phi_{2}(z_2))^\dagger },& z&\not\in\overline{ \operatorname{supp}_1(u)}.
 	\end{align*}
\end{defi}

\begin{defi}
The mixed Christoffel--Darboux  kernels are
	\begin{align}\label{kernelChristoffeldefinicionphiC}
	K_{C,\phi}^{[l]}(\bar z_1,z_2)&:=\sum_{k=0}^{l-1}\overline{C{_{2,k}}(z_1)}H_{k}^{-1}\phi_{1,k}(z_2)=[C_{2}(z_1)^{\dagger}]^{[l]}(H^{-1})^{[l]}[\phi_{1}(z_2)]^{[l]},& z_1&\not\in \overline{\operatorname{supp}_1(u)},\\
		K_{\phi,C}^{[l]}(\bar z_1,z_2)&:=\sum_{k=0}^{l-1}\overline{\phi{_{2,k}}(z_1)}H_{k}^{-1}C_{1,k}(z_2)=[\phi_{2}(z_1)^{\dagger}]^{[l]}(H^{-1})^{[l]}[C_{1}(z_2)]^{[l]},&z_2&\not\in \overline{\operatorname{supp}_2(u).}\label{kernelChristoffeldefinicionCphi}
	\end{align}
\end{defi}

\begin{pro}
The mixed kernels have the following expressions
	\begin{align*}%\label{kernelChristoffeldefinicionphiC}
	K_{C,\phi}^{[l]}(\bar x_1,x_2)&=
	\prodint{\frac{1}{\bar x_1-z_1}, \overline{K^{[l]}(\bar z_2,x_2)}}_u,&
	K_{\phi,C}^{[l]}(\bar x_1,x_2)&:=\prodint{K^{[l]}(\bar x_1,z_1), \frac{1}{\bar x_2-z_2}}_u.
	\end{align*}
\end{pro}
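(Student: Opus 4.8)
The plan is to unravel the definitions of the mixed kernels given in \eqref{kernelChristoffeldefinicionphiC}--\eqref{kernelChristoffeldefinicionCphi} and to recognize the sesquilinear-form right-hand sides as partial-sum projections of the Cauchy kernels against the ordinary Christoffel--Darboux kernel. For the first identity, I would start from the right-hand side $\prodint{\tfrac{1}{\bar x_1-z_1},\overline{K^{[l]}(\bar z_2,x_2)}}_u$, expand $\overline{K^{[l]}(\bar z_2,x_2)}=\sum_{k=0}^{l-1}\phi_{2,k}(z_2)\bar H_k^{-1}\overline{\phi_{1,k}(x_2)}$ as in the proof of the projection properties above, and then use antilinearity in the second slot together with bilinearity to pull the $x_2$-dependent scalars $\bar H_k^{-1}\overline{\phi_{1,k}(x_2)}$ out of the form. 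What remains inside is $\prodint{\tfrac{1}{\bar x_1-z_1},\phi_{2,k}(z_2)}_u$, and by the Cauchy second kind function proposition this equals $\overline{C_{2,k}(x_1)}$ (reading off the $k$-th component of $(C_2(z))^\dagger=\prodint{\tfrac{1}{\bar z-z_1},(\phi_2(z_2))^\top}_u$, valid for $x_1\notin\overline{\operatorname{supp}_1(u)}$, which is exactly the stated domain of $K_{C,\phi}^{[l]}$). Reassembling the sum $\sum_{k=0}^{l-1}\overline{C_{2,k}(x_1)}H_k^{-1}\phi_{1,k}(x_2)$ gives precisely the definition \eqref{kernelChristoffeldefinicionphiC}.

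The second identity is entirely analogous with the roles of the two slots exchanged: I would expand $K^{[l]}(\bar x_1,z_1)=\sum_{k=0}^{l-1}\overline{\phi_{2,k}(x_1)}H_k^{-1}\phi_{1,k}(z_1)$, factor the $x_1$-dependent scalars $\overline{\phi_{2,k}(x_1)}H_k^{-1}$ out of $\prodint{\cdot,\tfrac{1}{\bar x_2-z_2}}_u$ by linearity in the first slot, and identify the surviving factor $\prodint{\phi_{1,k}(z_1),\tfrac{1}{\bar x_2-z_2}}_u$ with the $k$-th component $C_{1,k}(x_2)$ of the Cauchy second kind function $C_1(z)=\prodint{\phi_1(z_1),\tfrac{1}{\bar z-z_2}}_u$, valid for $x_2\notin\overline{\operatorname{supp}_2(u)}$, which matches the stated domain of $K_{\phi,C}^{[l]}$. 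Collecting terms yields \eqref{kernelChristoffeldefinicionCphi}.

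The only genuine subtlety, and the step I expect to need the most care, is the interchange of the (finite) sum with the sesquilinear form and in particular the passage from the \emph{partial} second kind functions $C_{1,1},C_{1,2},C_{2,1},C_{2,2}$ to the Cauchy second kind functions $C_1,C_2$: the representation $\prodint{\cdot,\tfrac{1}{\bar z-z_i}}_u$ is only literally an integral/series expansion in the annuli $|z|>\sup|z_i|$ or $|z|<\inf|z_i|$, and elsewhere it is the \emph{extension by analytic continuation} defined in the Cauchy-second-kind definition. So rigorously one argues the identity first on those annuli, where the continuity of $\prodint{\cdot,\cdot}_u$ and uniform convergence of the geometric series justify moving the form past the sum (exactly as in the proof of the partial second kind proposition), and then invokes that both sides are analytic in $x_1$ (resp.\ $x_2$) off $\overline{\operatorname{supp}_1(u)}$ (resp.\ $\overline{\operatorname{supp}_2(u)}$) to propagate the equality to the full stated domain. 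Since the sums are finite this is painless once the single-term case is in hand, so the proof is short; I would write it as two displayed chains of equalities, one per identity, mirroring the already-given proof of the Christoffel--Darboux projection properties.
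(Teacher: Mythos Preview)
Your proposal is correct and follows essentially the same approach as the paper: expand the Christoffel--Darboux kernel inside the sesquilinear form, use (anti)linearity to pull the finite sum and the $x$-dependent scalars outside, and identify the remaining pairings $\prodint{\tfrac{1}{\bar x_1-z_1},\phi_{2,k}(z_2)}_u=\overline{C_{2,k}(x_1)}$ and $\prodint{\phi_{1,k}(z_1),\tfrac{1}{\bar x_2-z_2}}_u=C_{1,k}(x_2)$ componentwise. The only cosmetic difference is that the paper runs the chain from the mixed-kernel definition to the sesquilinear expression, whereas you start from the sesquilinear side; also, your discussion of analytic continuation is more cautious than necessary, since by this point in the paper the Cauchy second kind functions $C_1,C_2$ have already been \emph{defined} as the extensions off the support, so one may invoke them directly without revisiting the annular argument.
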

\begin{proof}
	We recall that $\overline{C_{2,k}(x_1)}=\prodint{\frac{1}{\bar x_1-z_1}, \phi_{2,k}(z_2)}$ for $\bar x_1\not\in \operatorname{supp}_1(u)$,
	 and $C_{1,k}(x_2)=\prodint{\phi_{1,k}(z_1), \frac{1}{\bar x_2-z_2}}_u$  for $x_2\not\in\overline{\operatorname{supp}_2(u)}$, so that
	\begin{align*}%\label{kernelChristoffeldefinicionphiC}
	K_{C,\phi}^{[l]}(\bar x_1,x_2)&=\sum_{k=0}^{l-1}\overline{C{_{2,k}}(x_1)}H_{k}^{-1}\phi_{1,k}(x_2)
	=\prodint{\frac{1}{\bar x_1-z_1}, \overline{\sum_{k=0}^{l-1}\overline{\phi_{2,k}(z_2)}H_{k}^{-1}\phi_{1,k}(x_2)}}_u,\\
	K_{\phi,C}^{[l]}(\bar x_1,x_2)&=\sum_{k=0}^{l-1}\overline{\phi{_{2,k}}(x_1)}H_{k}^{-1}C_{1,k}(x_2)=\sum_{k=0}^{l-1}\overline{\phi{_{2,k}}(x_1)}H_{k}^{-1}\prodint{\phi_{1,k}(z_1), \frac{1}{\bar x_2-z_2}}_u.
	\end{align*}
\end{proof}
Hence, the mixed kernels can be thought as the projections of the Cauchy kernels or, equivalently, the Cauchy transforms of the Christoffel--Darboux kernels.

\begin{rem}
	In what follows, we will consider several types of supports that belong to curves  in the plane. As examples, we consider curves $\gamma:=\{z\in\mathbb C: \bar z=\tau(z)\}$
	for some function $\tau(z)$, here we give some examples
	\begin{enumerate}
		\item  	If $\tau(z)=z$ we get that $\gamma=\mathbb R$ and we recover the theory of biorthogonal Laurent polynomials on the real line.
		\item   When $\tau(z)=z^{-1}$ we have that $\gamma=\mathbb T$.
		This  leads  to biorthogonal polynomials on the unit circle.
		\item
		The Cassini oval is the locus  of points in the complex plane such that the product of the distance to the two foci remains constant.
		Let the foci be $(0,a)$ and $(0,-a)$, where $a$ is a positive number, then the  Cassini oval equation is
		$       |z-a||z+a|=b^2$       where $b$ is a positive number,
		so that
$\bar z^2=a^2+\frac{b^4}{z^2-a^2}$.
		This implies that
	$	\bar z=\tau(z)$ with $\tau(z):=\sqrt{\frac{a^2 z^2+b^4-a^4}{z^2-a^2}}$.
		Here we must take a determination of the square root, so that we choose the branch to be at the semiaxis $(-\infty,0)$. The Cassini oval  has three different shapes depending on $a\lessgtr b$.   If  $b>a$ we have a closed loop, enclosing the two foci, that cuts the $x$-axis at $\pm\sqrt{a^2+b^2}$. In this case the function $\tau(z)$ is analytic but for a branch cut at the imaginary axis of the form  $\left(-\sqrt{\frac{b^4-a^4}{a^2}}\operatorname{i}, \sqrt{\frac{b^4-a^4}{a^2}}\operatorname{i}\right)$. When $b<a$ we have two closed loops each of which contains one focus. Now the function  $\tau(z)$ is analytic but for  a branch cut at  $ \left(-\sqrt{\frac{a^4-b^4}{a^2}}, \sqrt{\frac{a^4-b^4}{a^2}}\right)$. Finally, for $a=b$ we have the Bernoulli  lemniscate, having the shape of an eight with  a double point at the origin, in this case $\tau(z)$ is analytic at $\mathbb C^*$.  %Observe that in all cases  $\tau(z)$ is a local  conformal mapping as in its domain of definition we have $\tau'(z)$ is not zero.
		\item The cardioid is the locus of points in the complex plane that satisfy
		$(z{\bar  {z}}-a^{2})^{2}-4a^{2}(z-a)({\bar  {z}}-a)=0$, for $a>0$, that is
		%     \begin{align*}
		%     z^2\bar z^2-2a^2(3z-2a)\bar z+a^3(4z-3a)=0.
		%     \end{align*}
		%     Thus, we have
		$		\Big(    \bar z-a^2z^{-2}(3z-2a)\Big)^2=4a^3z^{-4}(a-z)^3$.
		%\frac{4a^4(3z-2a)^2-4z^2a^3(4z-3a)}{4z^4}
		Therefore, $	\bar z=\tau(z)$, $\tau(z):=\Big(2 a^{3/2}(a-z)^{3/2}+a^2(3z-2a)\Big)z^{-2}$.
		This is an analytic function in $\mathbb C^*$ but for three branch cuts starting at the cusp $z=a$  of the cardioid given by $\Exp{\operatorname{i}\pi/3}(a,+\infty)$,  $(a,+\infty)$, $\Exp{-\operatorname{i}\pi/3}(a,+\infty)$, and a double pole at the origin $z=0$.
		%The cardioid do  intersect  the branch cuts $z=a+a\Exp{\pm\operatorname{i}\pi/3}$.
	\end{enumerate}
\end{rem}

\section{Christoffel transformations}

We consider two possible types of perturbations of the sesquilinear form, the first is the multiplication by a Laurent polynomial $L^{(1)}(z_1)$ acting on the first variable.
Secondly, we also consider a perturbation but now we multiply by  $\overline{L^{(2)}(z_2)}$,  when the non perturbed bivariate linear functional supports lay on the curve $\bar z_2=\tau(z_2)$, this will be a Laurent polynomial on the variable $\tau(z_2)$.

\begin{defi}
Given two  Laurent polynomials $L^{(a)}(z)=L^{(a)}_{n} z^n+\dots+L^{(a)}_{-m} z^{-m}$, with $L^{(a)}_{n}L^{(a)}_{-m}\neq 0$ and $n,m\in\{0,1,2,\dots\}$,  $a\in\{1,2\}$, let us consider
the following two perturbations of the bivariate linear functional
\begin{align}\label{Chris1}
\hat u^{(1)}_{z_1,\bar z_2}&=L^{(1)}(z_1)u_{z_1,\bar z_2},  \\\label{Chris2}
\hat u^{(2)}_{z_1,\bar z_2}&=u_{z_1,\bar z_2} \overline{L^{(2)}(z_2)}.
\end{align}
\end{defi}

\begin{pro}
Christoffel transformations associated with  the Laurent polynomials $ L^{(1)}(z) $  and $ L ^{(2)}(z) $  imply for the corresponding Gram matrices
	\begin{align}
\label{gChristoffelNice}
\hat G^{(1)}&=L^{(1)}(\Upsilon)G, & \hat G^{(2)}&=G \big(L^{(2)}(\Upsilon)\big)^\dagger.
\end{align}
\end{pro}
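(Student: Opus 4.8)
The plan is to compute both perturbed Gram matrices straight from the pairing representation $\hat G^{(a)}=\prodint{\hat u^{(a)}_{z_1,\bar z_2},\chi(z_1)\otimes(\chi(z_2))^\dagger}$ and to reduce everything to the single algebraic fact, essentially the content of the Proposition on $\Upsilon$ recorded above, that for any Laurent polynomial $L(z)=\sum_{k=-m}^{n}L_k z^k$ one has $L(\Upsilon)\chi(z)=L(z)\chi(z)$. Indeed, $\Upsilon\chi(z)=z\chi(z)$ and $\Upsilon^{-1}\chi(z)=z^{-1}\chi(z)$ give $\Upsilon^k\chi(z)=z^k\chi(z)$ for every $k\in\Z$, and summing against the coefficients $L_k$ yields the claim; note that $L(\Upsilon)$ is a well-defined band matrix (a finite linear combination of powers of the unitary $\Upsilon$), so that multiplying by it commutes with the pairing against $u_{z_1,\bar z_2}$.

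\textbf{First perturbation.} I would substitute $\hat u^{(1)}_{z_1,\bar z_2}=L^{(1)}(z_1)u_{z_1,\bar z_2}$ into the Gram matrix and use that multiplying a functional by a function and then testing equals testing against the product, so that
\begin{align*}
\hat G^{(1)}=\prodint{u_{z_1,\bar z_2},\big(L^{(1)}(z_1)\chi(z_1)\big)\otimes(\chi(z_2))^\dagger}=\prodint{u_{z_1,\bar z_2},\big(L^{(1)}(\Upsilon)\chi(z_1)\big)\otimes(\chi(z_2))^\dagger}.
\end{align*}
Since $L^{(1)}(\Upsilon)$ is a constant (band) matrix acting on the row index carried by $\chi(z_1)$, it is pulled out of the pairing on the left, giving $\hat G^{(1)}=L^{(1)}(\Upsilon)\prodint{u_{z_1,\bar z_2},\chi(z_1)\otimes(\chi(z_2))^\dagger}=L^{(1)}(\Upsilon)G$.

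\textbf{Second perturbation.} Here the only extra step is the conjugate--transpose bookkeeping. From $\hat u^{(2)}_{z_1,\bar z_2}=u_{z_1,\bar z_2}\overline{L^{(2)}(z_2)}$ one gets $\hat G^{(2)}=\prodint{u_{z_1,\bar z_2},\chi(z_1)\otimes\big(\overline{L^{(2)}(z_2)}\,(\chi(z_2))^\dagger\big)}$, and I would rewrite the second factor as
\begin{align*}
\overline{L^{(2)}(z_2)}\,(\chi(z_2))^\dagger=\overline{L^{(2)}(z_2)\,(\chi(z_2))^\top}=\overline{\big(L^{(2)}(\Upsilon)\chi(z_2)\big)^\top}=\overline{(\chi(z_2))^\top\big(L^{(2)}(\Upsilon)\big)^\top}=(\chi(z_2))^\dagger\big(L^{(2)}(\Upsilon)\big)^\dagger,
\end{align*}
using once more $L^{(2)}(z_2)\chi(z_2)=L^{(2)}(\Upsilon)\chi(z_2)$. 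Pulling the constant matrix $\big(L^{(2)}(\Upsilon)\big)^\dagger$ out of the pairing on the right then yields $\hat G^{(2)}=G\big(L^{(2)}(\Upsilon)\big)^\dagger$.

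\textbf{Main obstacle.} There is no deep difficulty here: the whole argument rests on the spectral identity for $\Upsilon$ plus bilinearity and continuity of the sesquilinear form. The one place where care is needed is the interchange of the infinite matrices $L^{(a)}(\Upsilon)$ with the generalized-function pairing and, in the second case, the correct order of conjugation and transposition when moving from $\overline{L^{(2)}(z_2)}(\chi(z_2))^\dagger$ to $(\chi(z_2))^\dagger(L^{(2)}(\Upsilon))^\dagger$; both are legitimate precisely because $L^{(a)}(\Upsilon)$ has only finitely many nonzero diagonals, so every sum that occurs is finite.
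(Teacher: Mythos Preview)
Your proof is correct and follows essentially the same approach as the paper: both compute $\hat G^{(a)}$ directly from the pairing, absorb the Laurent polynomial into the $\chi$-vector via $L(\Upsilon)\chi(z)=L(z)\chi(z)$, and then pull the constant band matrix $L(\Upsilon)$ (respectively $(L(\Upsilon))^\dagger$) outside the functional. Your version is simply more explicit about the spectral identity and the conjugate--transpose bookkeeping in the second case, which the paper leaves implicit.
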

\begin{proof}
Just follow the following chains of equalities
\begin{align*}
\hat G^{(1)}&=\prodint{\hat u^{(1)}_{z_1,\bar z_2},\chi(z_1)\otimes\big( \chi(z_2)\big)^\dagger}\\
&=\prodint{ u_{z_1,\bar z_2},L^{(1)}(z_1)\chi(z_1)\otimes \big( \chi(z_2)\big)^\dagger}
\\
&=L^{(1)}(\Upsilon)\prodint{ u_{z_1,\bar z_2},\chi(z_1)\otimes \big( \chi(z_2)\big)^\dagger},
\\
\hat G^{(2)}&=\prodint{\hat u^{(2)}_{z_1,\bar z_2},\chi(z_1)\otimes \big( \chi(z_2)\big)^\dagger}\\
&=\prodint{ u_{z_1,\bar z_2},\chi(z_1)\otimes\big( \chi(z_2)\big)^\dagger\, \overline{L^{(2)}(z_2)}}
\\
&=\prodint{ u_{z_1,\bar z_2},\chi(z_1)\otimes \big( \chi(z_2)\big)^\dagger}  \big(L^{(2)}(\Upsilon)\big)^\dagger.
\end{align*}
\end{proof}

We assume that the two   Gram matrices are quasidefinite,  so that   the  Gauss--Borel factorization is ensured in both cases,  i. e.
\begin{align}\label{eq:quasidefGramCris}
\hat G^{(1)}&=\big(\hat S_1^{(1)}\big)^{-1}\hat H^{(1)}\big(\hat S_2^{(1)}\big)^{-\dagger}, &
\hat G^{(2)}&=\big(\hat S_1^{(2)}\big)^{-1}\hat H^{(2)}\big(\hat S_2^{(2)}\big)^{-\dagger}.
\end{align}

\subsection{Connection formulas}

\begin{defi}[Christoffel connectors]
Let us  consider a bivariate linear functional and the two Christoffel transformations \eqref{Chris1} and \eqref{Chris2} associated with the Laurent polynomials $ L^{(a)} (z) $, $a\in\{1,2\}$. The  connectors are the following semi-infinite matrices
	\begin{align*}
	\omega^{(1)}_1&=\hat S^{(1)}_1 L^{(1)}(\Upsilon)
(S_1)^{-1}, & 	\omega^{(1)}_2&=S_2\big(\hat S_2^{(1)}\big)^{-1},\\
\omega^{(2)}_1&=S_1\big(\hat S_1^{(2)}\big)^{-1}, &	\omega^{(2)}_2&=\hat S^{(2)}_2 L^{(2)}(\Upsilon)
	(S_2)^{-1}.
\end{align*}
\end{defi}
\begin{pro}The connectors fulfill the  ligatures	$\hat H^{(1)}\big(\omega_2^{(1)}\big)^\dagger =\omega^{(1)}_1H$ and $\omega_1^{(2)}\hat H^{(2)}=H\big(\omega_2^{(2)}\big)^\dagger$.
\end{pro}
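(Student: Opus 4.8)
The plan is to derive both ligatures directly from the definitions of the connectors together with the two Gauss--Borel factorizations \eqref{LU} and \eqref{eq:quasidefGramCris} and the matrix identities \eqref{gChristoffelNice}. Start with the first identity. From \eqref{gChristoffelNice} we have $\hat G^{(1)}=L^{(1)}(\Upsilon)G$; substitute the factorization $G=S_1^{-1}H(S_2^{-1})^\dagger$ on the right and $\hat G^{(1)}=(\hat S_1^{(1)})^{-1}\hat H^{(1)}(\hat S_2^{(1)})^{-\dagger}$ on the left, obtaining
\begin{align*}
\big(\hat S_1^{(1)}\big)^{-1}\hat H^{(1)}\big(\hat S_2^{(1)}\big)^{-\dagger}=L^{(1)}(\Upsilon)S_1^{-1}H\big(S_2^{-1}\big)^{\dagger}.
\end{align*}
Multiplying on the left by $\hat S_1^{(1)}$ and on the right by $(S_2^{-1})^{-\dagger}=S_2^{\dagger}$ rearranges this to
\begin{align*}
\hat H^{(1)}\big(\hat S_2^{(1)}\big)^{-\dagger}S_2^{\dagger}=\hat S_1^{(1)}L^{(1)}(\Upsilon)S_1^{-1}H.
\end{align*}
Now recognise the right-hand side as $\omega_1^{(1)}H$ by the definition of $\omega_1^{(1)}$, and the left-hand side as $\hat H^{(1)}\big(S_2(\hat S_2^{(1)})^{-1}\big)^{\dagger}=\hat H^{(1)}\big(\omega_2^{(1)}\big)^{\dagger}$ by the definition of $\omega_2^{(1)}$, which gives $\hat H^{(1)}\big(\omega_2^{(1)}\big)^{\dagger}=\omega_1^{(1)}H$ as claimed.

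For the second ligature I would run the symmetric argument using $\hat G^{(2)}=G\big(L^{(2)}(\Upsilon)\big)^{\dagger}$. Substituting both factorizations,
\begin{align*}
\big(\hat S_1^{(2)}\big)^{-1}\hat H^{(2)}\big(\hat S_2^{(2)}\big)^{-\dagger}=S_1^{-1}H\big(S_2^{-1}\big)^{\dagger}\big(L^{(2)}(\Upsilon)\big)^{\dagger}=S_1^{-1}H\big(L^{(2)}(\Upsilon)S_2^{-1}\big)^{\dagger},
\end{align*}
then multiply on the left by $S_1$ and on the right by $\big(\hat S_2^{(2)}\big)^{\dagger}$ to get
\begin{align*}
S_1\big(\hat S_1^{(2)}\big)^{-1}\hat H^{(2)}=H\big(\hat S_2^{(2)}L^{(2)}(\Upsilon)S_2^{-1}\big)^{\dagger}=H\big(\omega_2^{(2)}\big)^{\dagger},
\end{align*}
and the left-hand side is $\omega_1^{(2)}\hat H^{(2)}$ by definition, yielding $\omega_1^{(2)}\hat H^{(2)}=H\big(\omega_2^{(2)}\big)^{\dagger}$.

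The computation is essentially bookkeeping with inverses and daggers, so there is no deep obstacle; the one point that needs a word of care is that all the triangular factors and $L^{(a)}(\Upsilon)$ are genuinely invertible as semi-infinite matrices so that the manipulations are legitimate — $S_1,S_2,\hat S_1^{(a)},\hat S_2^{(a)}$ are lower unitriangular hence invertible, $H,\hat H^{(a)}$ are diagonal with nonzero entries by quasidefiniteness, and $L^{(a)}(\Upsilon)$ need not itself be invertible but is never inverted in the argument above. I would also note in passing that the ligatures are exactly the compatibility conditions forcing $\omega_1^{(a)}$ to be banded (lower triangular with finitely many nonzero diagonals above) and $\omega_2^{(a)}$ upper triangular with finitely many nonzero diagonals, which is what makes the connection formulas of Propositions \ref{ConnectionChristoffel} and \ref{ConnectionChristoffelCD} effective; but that banded structure is not needed for the present statement and I would defer it.
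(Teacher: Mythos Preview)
Your proof is correct and follows exactly the same route as the paper's: substitute the two Gauss--Borel factorizations into $\hat G^{(a)}=L^{(a)}(\Upsilon)G$ (respectively $G(L^{(2)}(\Upsilon))^\dagger$), clear the outer triangular factors, and identify the connectors. One small slip in your closing aside: the triangularity you sketch is reversed --- by the ligatures $\omega_1^{(1)},\omega_2^{(2)}$ are \emph{upper} triangular and $\omega_2^{(1)},\omega_1^{(2)}$ are \emph{lower} unitriangular (cf.\ Proposition~\ref{ConnectionChristoffel}) --- but as you say this is not needed here.
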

\begin{proof}
From \eqref{gChristoffelNice} y \eqref{eq:quasidefGramCris} we get
\begin{align*}
\big(\hat S_1^{(1)}\big)^{-1}\hat H^{(1)}\big(\hat S_2^{(1)}\big)^{-\dagger}&=L^{(1)}(\Upsilon)(S_1)^{-1}H(S_2)^{-\dagger},&
\big(\hat S_1^{(2)}\big)^{-1}\hat H^{(2)}\big(\hat S_2^{(2)}\big)^{-\dagger}&=(S_1)^{-1}H(S_2)^{-\dagger}L^{(2)}(\Upsilon),
\end{align*}
and, consequently,  we conclude
\begin{align*}
\hat H^{(1)}\big(\hat S_2^{(1)}\big)^{-\dagger}(S_2)^{\dagger}&=\hat S_1^{(1)}L^{(1)}(\Upsilon)(S_1)^{-1}H,&
S_1\big(\hat S_1^{(2)}\big)^{-1}\hat H^{(2)}&=H(S_2)^{-\dagger}\big(L^{(2)}(\Upsilon)\big)^\dagger\big(\hat S_2^{(2)}\big)^{\dagger}.
\end{align*}
\end{proof}
%\begin{defi}
%	We will use the notation $\underline{n}:=\max(n,m)$.
%\end{defi}

%Recall now  that the  matrices $ S $ are lower unitriangular to get
\begin{pro}\label{ConnectionChristoffel}	We will use the notation $\underline{n}:=\max(n,m)$.
\begin{enumerate}
	\item The connector $ \omega ^ {(1)} _ 1 $ is an upper triangular matrix with only $ 2\underline{n} + 1 $ nonzero upper diagonals.
%	In addition $ (\omega ^ {(1)} _ 1) _ {k, k} = \hat H ^ {(1)} _ k (H_k) ^ {- 1} $.
\item The connector $ \omega ^ {(1)} _ 2 $ is a lower unitriangular matrix with only $ 2\underline{n}+ 1 $ nonzero lower diagonals.
\item The connector $ \omega ^ {(2)} _ 1 $ is a lower unitriangular matrix with only $ 2\underline{n} + 1 $ nonzero lower diagonals.
\item The connector $ \omega ^ {(2)} _ 2 $ is an upper triangular matrix with only $ 2\underline{n}+ 1 $ nonzero upper diagonals.		
%Moreover, $(\bar\omega^{(2)}_2)_{k,k}=\hat H^{(2)}_k(H_2)^{-1}$. %y $(\omega^{(1)}_1)_{2k+1,2k+1+2m}=\beta$.
\end{enumerate}\end{pro}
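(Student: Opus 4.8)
The plan is to read off each structural statement directly from the definitions of the connectors together with the spectral property $\Upsilon\chi(z)=z\chi(z)$. The four connectors are products of the triangular factors $S_a,\hat S_a^{(a)}$ and the matrix $L^{(a)}(\Upsilon)$, so the work splits into two parts: (i) determine the banded/triangular shape of $L^{(a)}(\Upsilon)$, and (ii) see how this interacts with multiplication by lower unitriangular matrices on either side. For (i), since $L^{(a)}(z)=L^{(a)}_n z^n+\dots+L^{(a)}_{-m}z^{-m}$ we have $L^{(a)}(\Upsilon)=L^{(a)}_n\Upsilon^n+\dots+L^{(a)}_{-m}\Upsilon^{-m}$, and because $\Upsilon^{\pm1}\chi(z)=z^{\pm1}\chi(z)$ and $\chi$ runs through the CMV ordering $1,z^{-1},z,z^{-2},z^2,\dots$, multiplication by $z^j$ shifts the CMV index by at most $2|j|$ in either direction. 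Concretely, $z^k\chi^{(l)}(z)$ is a single monomial $z^{l'}$ with $|l'-l|\le 2|k|$ roughly, and expressed back in the CMV basis it sits within $2|k|$ positions of $\chi^{(l)}$ (the factor $2$ coming from the zig-zag of the CMV ordering between positive and negative powers). Hence $\Upsilon^{n}$ has nonzero entries confined to a band of width $2n$ around the diagonal, $\Upsilon^{-m}$ similarly with width $2m$, and $L^{(a)}(\Upsilon)$ is therefore a banded matrix whose nonzero diagonals lie between the $(-2\underline n)$-th and the $(2\underline n)$-th, i.e.\ a band of total width $2\underline n+1$ on each side.

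Next I would combine (i) with the triangular structure. Consider $\omega_1^{(1)}=\hat S_1^{(1)}L^{(1)}(\Upsilon)(S_1)^{-1}$. Multiplying a banded matrix on the left by a lower unitriangular matrix and on the right by a lower unitriangular matrix can only add entries \emph{below} the diagonal, never above it; so the strictly-upper part of $\omega_1^{(1)}$ coincides with the strictly-upper part of $L^{(1)}(\Upsilon)$, which has at most $2\underline n+1$ nonzero superdiagonals — that is statement i). But we also need that $\omega_1^{(1)}$ is actually \emph{upper} triangular, i.e.\ has no entries strictly below the diagonal at all. This is where the ligature $\hat H^{(1)}(\omega_2^{(1)})^\dagger=\omega_1^{(1)}H$ proved in the previous Proposition enters: $\omega_2^{(1)}=S_2(\hat S_2^{(1)})^{-1}$ is a product of lower unitriangular matrices, hence lower unitriangular, so $(\omega_2^{(1)})^\dagger$ is upper unitriangular; multiplying by the diagonal $\hat H^{(1)}$ on the left and $H^{-1}$ on the right keeps it upper triangular, so $\omega_1^{(1)}=\hat H^{(1)}(\omega_2^{(1)})^\dagger H^{-1}$ is upper triangular. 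This simultaneously gives the bandwidth of $\omega_2^{(1)}$: it is lower triangular (product of lower unitriangulars) and unitriangular, and its strictly-lower diagonals are controlled, via the ligature, by the $2\underline n+1$ superdiagonals of $\omega_1^{(1)}$ together with the diagonal $H$'s, yielding at most $2\underline n+1$ nonzero subdiagonals — statement ii). So the two halves of the argument feed each other: band-counting from $L^{(1)}(\Upsilon)$ gives one of them, the ligature transfers the band count and triangularity to the other.

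For $\omega_1^{(2)}$ and $\omega_2^{(2)}$ the argument is symmetric, using instead $\hat G^{(2)}=G(L^{(2)}(\Upsilon))^\dagger$ and the ligature $\omega_1^{(2)}\hat H^{(2)}=H(\omega_2^{(2)})^\dagger$. Here $\omega_1^{(2)}=S_1(\hat S_1^{(2)})^{-1}$ is manifestly a product of lower unitriangular matrices, hence lower unitriangular, and its subdiagonal bandwidth is pinned down by transposing the ligature: $(\omega_2^{(2)})^\dagger=H^{-1}\omega_1^{(2)}\hat H^{(2)}$, while $\omega_2^{(2)}=\hat S_2^{(2)}L^{(2)}(\Upsilon)(S_2)^{-1}$ is upper triangular with at most $2\underline n+1$ superdiagonals by the same left/right unitriangular-multiplication-plus-band argument as before, and the diagonal conjugation matches the two bandwidths. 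I would present the $a=1$ case in full and then say ``the case $a=2$ follows by the analogous computation, exchanging the roles of the two factors.''

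The main obstacle is the precise band-count for $L^{(a)}(\Upsilon)$ — i.e.\ proving that $\Upsilon^{j}$ has its nonzero entries within $2|j|$ diagonals of the main diagonal, and hence that the top power $z^n$ and the bottom power $z^{-m}$ contribute superdiagonals reaching exactly $2n$ and subdiagonals reaching $2m$ (so that after accounting for both one gets the uniform bound $2\underline n+1$ on each side, $\underline n=\max(n,m)$). This requires being careful about the CMV index arithmetic: writing $\chi^{(l)}(z)=z^{\lceil l/2\rceil}$ or $z^{-\lceil l/2\rceil}$ according to parity, one checks that $z^{\pm1}\chi^{(l)}$ lands on $\chi^{(l\pm2)}$ or $\chi^{(l\mp1)}$ depending on the parity of $l$, so a single step of $\Upsilon^{\pm1}$ moves the index by at most $2$; iterating $j$ times bounds the displacement by $2|j|$, but one must verify the bound is attained at the extreme powers so that no diagonals are ``lost,'' which is what forces the band width to be exactly $2\underline n+1$ rather than something smaller. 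Everything else — triangularity, unitriangularity, transfer of the bound through the ligatures — is bookkeeping with triangular and diagonal matrices.
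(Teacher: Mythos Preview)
Your proposal is correct and follows essentially the paper's (implicit) approach: the paper states this proposition without proof, treating it as a direct consequence of the ligature $\hat H^{(1)}(\omega_2^{(1)})^\dagger=\omega_1^{(1)}H$ (and its $a=2$ counterpart) combined with the lower-unitriangularity of the factors $S_a,\hat S_a^{(a)}$ and the $2\underline n$-band of $L^{(a)}(\Upsilon)$---precisely the ingredients you assemble (compare the Geronimus section, where the paper just writes ``since the factors $S$ are lower unitriangular matrices we conclude'' before the analogous statement). Your concern about the extreme diagonal being attained is unnecessary here, since the proposition only asserts an upper bound on the number of nonzero diagonals; the sharpness (for prepared polynomials) is recorded separately in the Remark preceding Theorem~\ref{Christoffel Formulas}.
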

The connectors are banded semi-infinite  matrices that give the relationship between  the perturbed and the original biorthogonal families.
\begin {pro}[Connection formulas for the CMV biorthogonal Laurent polynomials] \label {pro:conexión}
We have the following  connection formulas
\begin{align*}
\omega^{(1)}_1 \phi_1(z) &= L^{(1)}(z)\hat\phi_1^{(1)}(z), & \omega^{(1)}_2\hat \phi^{(1)}_2(z)&=\phi_2(z),\\
\omega_1^{(2)}\hat \phi_1^{(2)}(z)&=\phi_1(z), & \omega^{(2)}_2\phi_2(z)&=L^{(2)}(z)\hat \phi^{(2)}_2(z).
\end{align*}
\end{pro}

%\subsection{Connection formulas for the  Christoffel--Darboux kernels}
\begin{pro}[Connection formulas for the Christoffel--Darboux kernels]\label{ConnectionChristoffelCD}
For $ l \geq 2\underline{n} $,  the perturbed and original Christoffel--Darboux kernels satisfy
\begin{align}
\label{kernelChristoffelrelacion}&\begin{multlined}
K^{[l]}(\bar z_1,z_2)=L^{(1)}(z_2)(\hat{K}^{(1)})^{[l]}(\bar z_1,z_2)-
\overline{\left[\hat\phi_{2,l-2\underline{n}}^{(1)}(z_1),\cdots,\hat{\phi}^{(1)}_{2,l-1}(z_1)\right]}\diag(
	(\hat{H}^{(1)}_{l-2\underline{n}})^{-1},\dots, (\hat H^{(1)}_{l-1})^{-1} )\\
\times\begin{bmatrix}
	(\omega^{(1)}_1)_{l-2\underline{n},l} &0&0&\dots  & 0 \\
	(\omega^{(1)}_1)_{l-2\underline{n}+1,l} &	(\omega^{(1)}_1)_{l-2\underline{n}+1,l+1} &0&\dots  & 0 \\
(\omega^{(1)}_1)_{l-2\underline{n}+2,l} &(\omega^{(1)}_1)_{l-2\underline{n}+2,l+1} &(\omega^{(1)}_1)_{l-2\underline{n}+2,l+2}&\ddots  & 0 \\
\vdots & && \ddots & \\
 (\omega^{(1)}_1)_{l-1,l} &(\omega^{(1)}_1)_{l-1,l+1} &(\omega^{(1)}_1)_{l-1,l+2} &\dots &(\omega^{(1)}_1)_{l-1,l-1+2\underline{n}}
	\end{bmatrix}
\begin{bmatrix}
	\phi_{1,l}(z_2) \\
	 \vdots \\
   \phi_{1,l-1+2\underline{n}}(z_2)\\
	\end{bmatrix},
\end{multlined}	
\\&
\label{kernelChristoffelrelacion2}\begin{multlined}
\overline{K^{[l]}(\bar z_1,z_2)}=L^{(2)}(z_1)\overline{(\hat{K}^{(2)})^{[l]}(\bar z_1,z_2)}-
\overline{\left[\hat\phi_{1,l-2\underline{n}}^{(2)}( z_2),\cdots,\hat\phi^{(2)}_{1,l-1}( z_2)\right]}\overline{\diag(
(\hat{H}^{(2)}_{l-2\underline{n}})^{-1},\dots, (\hat H^{(2)}_{l-1})^{-1} )}\\
\times\begin{bmatrix}
(\omega^{(2)}_2)_{l-2\underline{n},l} &0&0&\dots  & 0 \\
(\omega^{(2)}_2)_{l-2\underline{n}+1,l} &	(\omega^{(2)}_2)_{l-2\underline{n}+1,l+1} &0&\dots  & 0 \\
(\omega^{(2)}_2)_{l-2\underline{n}+2,l} &(\omega^{(2)}_2)_{l-2\underline{n}+2,l+1} &(\omega^{(2)}_2)_{l-2\underline{n}+2,l+2}&\ddots  & 0 \\
\vdots & && \ddots & \\
(\omega^{(2)}_2)_{l-1,l} &(\omega^{(2)}_2)_{l-1,l+1} &(\omega^{(2)}_2)_{l-1,l+2} &\dots &(\omega^{(2)}_2)_{l-1,l-1+2\underline{n}}
\end{bmatrix}
\begin{bmatrix}
\phi_{2,l}(z_1) \\
\vdots \\
\phi_{2,l-1+2\underline{n}}(z_1)\\
\end{bmatrix}.
\end{multlined}	
\end{align}

\end{pro}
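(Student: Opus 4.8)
The plan is to derive the kernel connection formulas from the polynomial connection formulas of Proposition \ref{pro:conexi�n} together with the banded structure of the connectors described in Proposition \ref{ConnectionChristoffel}. I focus on \eqref{kernelChristoffelrelacion}; the second identity \eqref{kernelChristoffelrelacion2} follows by the same argument after applying complex conjugation and interchanging the roles of the two variables and the two families. First I would recall the matrix expression $K^{[l]}(\bar z_1,z_2)=[\phi_2(z_1)^\dagger]^{[l]}(H^{-1})^{[l]}[\phi_1(z_2)]^{[l]}$ from \eqref{kernelChristoffeldefinicion}, and likewise $(\hat K^{(1)})^{[l]}(\bar z_1,z_2)=[\hat\phi_2^{(1)}(z_1)^\dagger]^{[l]}((\hat H^{(1)})^{-1})^{[l]}[\hat\phi_1^{(1)}(z_2)]^{[l]}$. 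The strategy is to rewrite the unperturbed data $\phi_2(z_1)$, $H^{-1}$, $\phi_1(z_2)$ in terms of the perturbed data using the connectors, truncate to size $l$, and track which entries of the connectors fall outside the $l\times l$ block.

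The key algebraic input is the ligature $\hat H^{(1)}(\omega_2^{(1)})^\dagger=\omega_1^{(1)}H$, which lets me write $H^{-1}(\omega_1^{(1)})^{-1}=(\omega_2^{(1)})^{-\dagger}(\hat H^{(1)})^{-1}$, or in the form I actually need, a rearrangement expressing $[\phi_2(z_1)^\dagger] H^{-1}$ against the perturbed quantities. Concretely, from $\omega^{(1)}_2\hat\phi^{(1)}_2(z)=\phi_2(z)$ and $\omega^{(1)}_1\phi_1(z)=L^{(1)}(z)\hat\phi^{(1)}_1(z)$ I would substitute into $K^{[l]}=\phi_2(z_1)^\dagger H^{-1}\phi_1(z_2)$ (dropping superscripts $[l]$ momentarily and working with full semi-infinite vectors, then truncating). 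Using the ligature to convert $H^{-1}$ into $(\hat H^{(1)})^{-1}$ sandwiched between the appropriate connectors, one gets $K=\hat\phi_2^{(1)}(z_1)^\dagger(\hat H^{(1)})^{-1}\omega_1^{(1)}\phi_1(z_2)$. Now $\omega_1^{(1)}\phi_1(z_2)=L^{(1)}(z_2)\hat\phi_1^{(1)}(z_2)$ gives the principal term $L^{(1)}(z_2)(\hat K^{(1)})^{[l]}$, but only after truncation — and truncation is where the correction term is born.

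The heart of the argument, and the main obstacle, is the careful bookkeeping of the truncation. When I write $K^{[l]}(\bar z_1,z_2)=[\hat\phi_2^{(1)}(z_1)^\dagger]^{[l]}((\hat H^{(1)})^{-1})^{[l]}[\omega_1^{(1)}]^{[l]}[\phi_1(z_2)]^{[l]}$, this is valid because $\omega_1^{(1)}$ is upper triangular (Proposition \ref{ConnectionChristoffel}(i)), so the product of truncations equals the truncation of the product on this block. However $[\omega_1^{(1)}]^{[l]}[\phi_1(z_2)]^{[l]}$ is \emph{not} $[\omega_1^{(1)}\phi_1(z_2)]^{[l]}=L^{(1)}(z_2)[\hat\phi_1^{(1)}(z_2)]^{[l]}$: the full product $\omega_1^{(1)}\phi_1(z_2)$ in rows $0,\dots,l-1$ picks up contributions from $\phi_{1,l}(z_2),\dots,\phi_{1,l-1+2\underline n}(z_2)$, since $\omega_1^{(1)}$ has exactly $2\underline n+1$ nonzero upper diagonals, so row $k$ reaches column $k+2\underline n$. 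Thus $[\omega_1^{(1)}]^{[l]}[\phi_1(z_2)]^{[l]} = L^{(1)}(z_2)[\hat\phi_1^{(1)}(z_2)]^{[l]} - [\,(\omega_1^{(1)})_{k,l+j}\,]_{k,j}\,[\phi_{1,l}(z_2),\dots]^\top$, where the rectangular matrix of connector entries is nonzero only for $k\ge l-2\underline n$ (row $k$ hits column $\ge l$ only if $k+2\underline n\ge l$). This is exactly the triangular rectangular block displayed in the statement, and the surviving rows $l-2\underline n,\dots,l-1$ of $[\hat\phi_2^{(1)}(z_1)^\dagger]^{[l]}((\hat H^{(1)})^{-1})^{[l]}$ produce the prefactor $\overline{[\hat\phi_{2,l-2\underline n}^{(1)}(z_1),\dots,\hat\phi^{(1)}_{2,l-1}(z_1)]}\diag((\hat H^{(1)}_{l-2\underline n})^{-1},\dots)$. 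The hypothesis $l\ge 2\underline n$ guarantees $l-2\underline n\ge 0$ so the index ranges are legitimate. Assembling these pieces yields \eqref{kernelChristoffelrelacion}; for \eqref{kernelChristoffelrelacion2} I would start instead from $\overline{K^{[l]}}=[\phi_1(z_2)^\dagger]^{[l]}(\bar H^{-1})^{[l]}[\phi_2(z_1)]^{[l]}$, use the second ligature $\omega_1^{(2)}\hat H^{(2)}=H(\omega_2^{(2)})^\dagger$ together with $\omega_2^{(2)}\phi_2(z)=L^{(2)}(z)\hat\phi_2^{(2)}(z)$ and $\omega_1^{(2)}\hat\phi_1^{(2)}(z)=\phi_1(z)$, and repeat the identical truncation analysis with $\omega_2^{(2)}$ (upper triangular with $2\underline n+1$ diagonals, by Proposition \ref{ConnectionChristoffel}(iv)) in place of $\omega_1^{(1)}$.
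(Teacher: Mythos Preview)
Your proposal is correct and follows essentially the same route as the paper: rewrite $\phi_2(z_1)^\dagger H^{-1}$ as $\hat\phi_2^{(1)}(z_1)^\dagger(\hat H^{(1)})^{-1}\omega_1^{(1)}$ via the ligature, then track the off-block piece $[\omega_1^{(1)}]^{[l,\geq l]}[\phi_1(z_2)]^{[\geq l]}$ that appears when truncating $\omega_1^{(1)}\phi_1(z_2)=L^{(1)}(z_2)\hat\phi_1^{(1)}(z_2)$. The paper simply presents this as two truncated identities and subtracts, whereas you compute $K^{[l]}$ directly and isolate the correction, but the content is identical.
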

\begin{proof} See Appendix. 
\end{proof}
\subsection{Christoffel formulas}

We now require the perturbing polynomials to be of a particular form.  Féjer \cite{fejer} and Riesz \cite{riesz} found a representation  for nonnegative  trigonometric polynomials.  Any nonnegative trigonometric polynomial
$f(\theta)=a_0+\sum_{k=1}^n(a_k\cos(k\theta)+b_k\sin(k\theta))$
has the form $f(\theta)=|p(z)|^2$ where $p(z)=\sum\limits_{l=0}^n p_lz^l$ and $z=\Exp{\operatorname{i}\theta}$.
Following Grenader and Szeg\H{o} , see 1.12 in \cite{Grenader}, this is equivalent to write
$f(\theta)=z^{-n} P(z)$ with $z=\Exp{\operatorname{i}\theta}$
and $P(z)\in\mathbb C[z]$ is a polynomial with $\deg P(z)=2n$ such that
$ P(z)=P^*(z)$,
here we have used the Szeg\H{o} reciprocal polynomial $ P^*(z):= z^{2n}\bar P(z^{-1})$ of $P(z)$, fulfilling $z^{-n}P(z)=|P(z)|$ for $z\in\mathbb T$.  Observe that  for $z\in\mathbb C^*$ the function $L(z)=z^{-n} P(z)$ is not any more a trigonometric polynomial but a Laurent polynomial.  These Laurent polynomials are precisely those that as perturbations allow us to find Christoffel formulas.
Given a Laurent polynomial its reciprocal is given by $L_*(z):=\bar L(z^{-1})$, thus for $L(z)=z^{-n}P(z)$ we have $L_*(z)= z^n\bar P (z^{-1})=z^{-n}P^*(z)$ and if $P(z)=P^*(z)$ we find $L_*(z)=L(z)$; the positivity condition reads: $f(\theta)=L(z)$ with $L(z)$ a Laurent polynomial with $L(z)=L_*(z)$ and $L(z)=|L(z)|$ for $z\in\mathbb T$.

\begin{defi}[Prepared  Laurent polynomials] For every $2n$-degree polynomial $P(z)=P_{2n}z^{2n}+\dots+P_0\in\mathbb C[z]$ with $P_0\neq 0$,  its Féjer--Riesz corresponding  Laurent polynomial is given by
	\begin{align}\label{polinomio perturbador}
 L(z)&=z^{-n}P(z)=L_n z^{n}+\cdots + L_{-n} z^{-n}, & L_n&=P_{2n},&L_{-n}&=P_0.
\end{align}
We say that a  Laurent polynomial is prepared whenever it is the   Féjer--Riesz corresponding  Laurent polynomials  of  an even degree polynomial non vanishing at the origin.
%A prepared Laurent polynomial is said to be $\tau$-ready if $\overline{\sigma(L)}\subset \tau(D)$.
%In this case $\tau^{-1}(\bar{\zeta}_i)$, $\zeta_i\in\sigma(L)$, means that we take one of the possible  preimages, there is at least one of them.
\end{defi}

For the consideration of arbitrary multiplicities of the zeros of the perturbing polynomials we need of
\begin{defi}[Spectral jets]
	Given a Laurent polynomial $L(z)$ with zeros and multiplicities  $ \{\zeta _ {i}, m_ {i} \} _ {i = 1} ^ {d} $ we introduce the spectral jet of a  function $ f (z)$ along $L(z)$ as follows:
	\begin{align*}
	\mathcal J^L_f
	:=\left[f(\zeta_{1}),f'(\zeta_{1}),\cdots,\frac{f^{(m_{1}-1)}(\zeta_{1})}{(m_{1}-1)!},\cdots,f(\zeta_{d}),f'(\zeta_{d})\cdots,\frac{f^{(m_{d}-1)}(\zeta_{d})}{(m_{d}-1)!}\right]\in\mathbb C^{2m}.
	\end{align*}
\end{defi}
The spectral jet $ \mathcal J ^ { L^{(1)}}_ {K ^ {[l]}} (\bar z_1) $ of the kernel $ K ^ {[l]} (\bar z_1, z_2) $ is taken with respect to the variable $z_2 $ leaving the variable $ \bar z_1 $ as a fixed parameter. However,  the  spectral jet $ \mathcal J ^ { L^{(2)}} _ {\overline {K ^ {[l]}}} (\bar z_2) $ is taken with respect to the variable $ z_1 $  leaving the variable $ \bar z_2$ as fixed parameter. 

\begin{rem}
	For  prepared Laurent polynomials we have
that $ (\omega^{(1)}_1)_{k,k+2n} = L^{(1)}_{(-1)^kn}$ and $(\omega^{(2)}_2)_{k,k+2n} =  L^{(2)}_{(-1)^{k}n}$.		

\end{rem}\begin{teo}[Christoffel formulas]\label{Christoffel Formulas}
Given a prepared Laurent polynomial $L^{(1)}(z)$, let us suppose that
 $\hat \tau^{(1)}_{l}:=\begin{vsmallmatrix}
		\mathcal J^{L^{(1)}}_{\phi_{1,l}}\\
		\vdots\\
		\mathcal J^{L^{(1)}}_{\phi_{1,l+2n-1}}
		\end{vsmallmatrix}\neq 0$ and $l\geq 2m$.  Then,  given the Christoffel transformation \eqref{Chris1}, the perturbed biortogonal polynomials are expressed in terms of the original ones according to the following  formulas
\begin{align}\label{Chris11}
\hat{\phi}^{(1)}_{1,l}(z)&=\frac{L^{(1)}_{(-1)^ln}}{L^{(1)}(z)}
        \Theta_{*}
	\begin{bmatrix}
	\mathcal J^{L^{(1)}}_{\phi_{1,l}}& 	\phi_{1,l}(z)\\
	\vdots& \vdots\\
%	\mathcal J^{L^{(1)}}_{\phi_{1,l+2n-1}}&	\phi_{1,l+2n-1}(z)\\
	\mathcal J^{L^{(1)}}
_{\phi_{1,l+2n}}& \phi_{1,l+2n}(z)	
	\end{bmatrix}
	=\frac{L^{(1)}_{(-1)^ln}}{L(z)}\frac{1}{\hat \tau^{(1)}_{l}}
	\begin{vmatrix}
	\mathcal J^{L^{(1)}}_{\phi_{1,l}}&	\phi_{1,l}(z)\\
	\vdots&	\vdots\\
%	\mathcal J^{L^{(1)}}_{\phi_{1,l+2n-1}}&	\phi_{1,l+2n-1}(z)\\
	\mathcal J^{L^{(1)}}
_{\phi_{1,l+2n}}& \phi_{1,l+2n}(z)
	\end{vmatrix},	\end{align}
	\begin{align}\label{Chris10}
\hat{H}^{(1)}_{l}&=
L^{(1)}_{(-1)^ln}H_l\Theta_{*}
		\begin{bmatrix}
		\mathcal J^{L^{(1)}}_{\phi_{1,l}}&	 1\\\mathcal J^{L^{(1)}}_{\phi_{1,l+1}}&0\\
		\vdots&\vdots\\
	%	\mathcal J^{L^{(1)}}_{\phi_{1,l+2n-1}}&0\\
	\mathcal J^{L^{(1)}}_{\phi_{1,l+2n}}& 0	
		\end{bmatrix}
=L^{(1)}_{(-1)^ln}H_l
\frac{\hat \tau^{(1)}_{l+1}}{\hat \tau^{(1)}_{l}},
	\end{align}
	\begin{align}
	\label{Chris12}
	\overline{\hat \phi^{(1)}_{2,l}(z)}&=\frac{\hat{H}^{(1)}_{l}}{L^{(1)}_{(-1)^ln}}
	\Theta_*\begin{bmatrix}
			\mathcal J^{L^{(1)}}_{\phi_{1,l+1}}&0\\
			\vdots&\vdots\\\mathcal J^{L^{(1)}}_{\phi_{1,l+2n-1}}&0\\
			\mathcal J^{L^{(1)}}_{\phi_{1,l+2n}}&1\\
			\mathcal J^{L^{(1)}}_{K^{[l+1]}}(\bar z) & 0	
			\end{bmatrix}
		=-H_{l}\frac{1}{\hat \tau^{(1)}_{l+1}}
		\begin{vmatrix}
		\mathcal J^{L^{(1)}}_{\phi_{1,l+1}}\\
		\vdots\\\mathcal J^{L^{(1)}}_{\phi_{1,l+2n-1}}\\
		\mathcal J^{L^{(1)}}_{K^{[l+1]}}(\bar z)
		\end{vmatrix}.
\end{align}
Given a prepared Laurent polynomial $L^{(2)}(z)$, for the Christoffel transformation  \eqref{Chris2}, whenever $\hat \tau^{(2)}_{l}:=\begin{vsmallmatrix}
\mathcal J^{L^{(2)}}_{\phi_{2,l}}\\
\vdots\\
\mathcal J^{L^{(2)}}_{\phi_{2,l+2n-1}}
\end{vsmallmatrix}\neq 0$, we find
\begin{align}\label{Chris22}
\hat{\phi}^{(2)}_{2,l}(z)&=\frac{ L^{(2)}_{(-1)^{l}n}}{L^{(2)}(z) }
\Theta_{*}
\begin{bmatrix}
\mathcal J^{L^{(2)}}_{\phi_{2,l}}&\phi_{2,l}(z)\\
\vdots& \vdots\\
%\mathcal J^{L^{(2)}}_{\phi_{2,l+2n-1}}& \phi_{2,l+2n-1}(z)\\
\mathcal J^{L^{(2)}}_{\phi_{2,l+2n}}& \phi_{2,l+2n}(z)
\end{bmatrix}
=
\frac{ L^{(2)}_{(-1)^{l}n}}{L^{(2)}(z) }\frac{1}{\hat \tau^{(2)}_{l}}
\begin{vmatrix}
\mathcal J^{L^{(2)}}_{\phi_{2,l}}&\phi_{2,l}(z)\\
\vdots&\vdots\\
%\mathcal J^{L^{(2)}}_{\phi_{2,l+2n-1}}&\phi_{2,l+2m-1}(z)\\
\mathcal J^{L^{(2)}}_{\phi_{2,l+2n}}& \phi_{2,l+2n}(z)
\end{vmatrix},
\end{align}
\begin{align}\label{Chris20}
\bar{\hat{H}}^{(2)}_{l}&=
 L^{(2)}_{(-1)^{l}n}\bar H_l\Theta_{*}
\begin{bmatrix}
\mathcal J^{L^{(2)}}_{\phi_{2,l}}& 1\\\mathcal J^{L^{(2)}}_{\phi_{2,l+1}}& 0\\
\vdots&\vdots\\
%\mathcal J^{L^{(2)}}_{\phi_{2,l+2n-1}}&0\\
\mathcal J^{L^{(2)}}_{\phi_{2,l+2n}}& 0
\end{bmatrix}
= L^{(2)}_{(-1)^{l}n}\bar H_l\frac{\hat \tau^{(2)}_{l+1}}{\hat \tau^{(2)}_{l}},\end{align}
\begin{align}\label{Chris21}
\overline{\hat \phi^{(2)}_{1,l}(z)}&=\frac{\bar{\hat{H}}^{(2)}_{l}}{ L^{(2)}_{(-1)^{l}n}}
\Theta_{*}
\begin{bmatrix}
\mathcal J^{L^{(2)}}_{\phi_{2,l+1}}&0\\
\vdots&\vdots\\\mathcal J^{L^{(2)}}_{\phi_{2,l+2n-1}}&0\\
\mathcal J^{L^{(2)}}_{\phi_{2,l+2n}}&1\\
\mathcal J^{L^{(2)}}_{\overline{K^{[l+1]}}}(\bar z) & 0
\end{bmatrix}
=-\bar H_l\frac{1}{\hat \tau^{(2)}_{l+1}}
\begin{vmatrix}
\mathcal J^{L^{(2)}}_{\phi_{2,l+1}}\\
\vdots\\\mathcal J^{L^{(2)}}_{\phi_{2,l+2n-1}}\\
\mathcal J^{L^{(2)}}_{\overline{K^{[l+1]}}}(\bar z)
\end{vmatrix}.
\end{align}
\end{teo}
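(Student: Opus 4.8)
The plan is to derive the Christoffel formulas as a direct consequence of the connection formulas in Proposition \ref{pro:conexi�n} and Proposition \ref{ConnectionChristoffelCD}, combined with the banded structure of the connectors established in Proposition \ref{ConnectionChristoffel}. I will treat the transformation \eqref{Chris1} in detail; the case of \eqref{Chris2} is entirely parallel by the symmetry between the two variables, swapping the roles of $\phi_1$ and $\phi_2$, of $H$ and $\bar H$, and of $K^{[l]}$ and $\overline{K^{[l]}}$, so I would merely indicate the changes at the end.

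First I would unpack the identity $\omega_1^{(1)}\phi_1(z)=L^{(1)}(z)\hat\phi_1^{(1)}(z)$ at row $l$. Since $L^{(1)}$ is prepared of degree $n$ (so $m=n$, $\underline n=n$), Proposition \ref{ConnectionChristoffel}(i) says $\omega_1^{(1)}$ is upper triangular with exactly $2n+1$ nonzero upper diagonals, whence the $l$-th row of $\omega_1^{(1)}\phi_1(z)$ involves only $\phi_{1,l}(z),\dots,\phi_{1,l+2n}(z)$, with the top coefficient $(\omega_1^{(1)})_{l,l+2n}=L^{(1)}_{(-1)^l n}$ by the Remark preceding the theorem. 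So
\begin{align*}
L^{(1)}(z)\,\hat\phi^{(1)}_{1,l}(z)=\sum_{j=0}^{2n}(\omega_1^{(1)})_{l,l+j}\,\phi_{1,l+j}(z).
\end{align*}
The remaining task is to identify the $2n$ unknown connector entries $(\omega_1^{(1)})_{l,l},\dots,(\omega_1^{(1)})_{l,l+2n-1}$. Here is the key observation: $\hat\phi^{(1)}_{1,l}(z)$ is a Laurent polynomial, so $L^{(1)}(z)\hat\phi^{(1)}_{1,l}(z)$ vanishes, together with the appropriate number of derivatives, at every zero $\zeta_i$ of $L^{(1)}(z)$ of multiplicity $m_i$. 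Imposing this on the right-hand side gives exactly $2n$ linear conditions (one for each entry of the spectral jet $\mathcal J^{L^{(1)}}$) on the $2n+1$ unknown coefficients, and these conditions are precisely the statement that the vector $\big((\omega_1^{(1)})_{l,l},\dots,(\omega_1^{(1)})_{l,l+2n}\big)$ annihilates the matrix of jets $\big[\mathcal J^{L^{(1)}}_{\phi_{1,l}};\dots;\mathcal J^{L^{(1)}}_{\phi_{1,l+2n}}\big]$ when paired on the left. Solving this linear system by Cramer's rule, with the normalization $(\omega_1^{(1)})_{l,l+2n}=L^{(1)}_{(-1)^l n}$, and substituting back, yields the quasideterminantal expression \eqref{Chris11}; the hypothesis $\hat\tau^{(1)}_l\neq 0$ is exactly the nondegeneracy that makes the $2n\times 2n$ minor $|A|$ in $\Theta_*$ nonzero, so the Schur complement is well defined. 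I would also remark that $\hat\tau^{(1)}_l\neq0$ is guaranteed by quasidefiniteness of $\hat G^{(1)}$, which is why it can be assumed.

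For the norm \eqref{Chris10}, I would expand $\hat H^{(1)}_l$ from the ligature $\hat H^{(1)}(\omega_2^{(1)})^\dagger=\omega_1^{(1)}H$: reading the $(l,l)$ entry and using that $\omega_2^{(1)}$ is lower unitriangular gives $\hat H^{(1)}_l=(\omega_1^{(1)})_{l,l}H_l$. Then $(\omega_1^{(1)})_{l,l}$ is the first component of the solution vector above, which Cramer's rule expresses as the ratio $L^{(1)}_{(-1)^l n}\hat\tau^{(1)}_{l+1}/\hat\tau^{(1)}_l$ once one recognizes the cofactor of the $(1,1)$-position — this is the $\Theta_*$ expression with the column $(1,0,\dots,0)^\top$ appended. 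For \eqref{Chris12}, I would start from the second connection formula $\omega_2^{(1)}\hat\phi_2^{(1)}(z)=\phi_2(z)$ together with the ligature, which lets me write $\overline{\hat\phi^{(1)}_{2,l}(z)}$ in terms of $\hat H^{(1)}_l/L^{(1)}_{(-1)^l n}$ times a combination of $\phi_{2,k}$; then I would invoke the kernel connection formula \eqref{kernelChristoffelrelacion} evaluated along the spectral jet — since the jet of $L^{(1)}(z_2)(\hat K^{(1)})^{[l]}$ vanishes, \eqref{kernelChristoffelrelacion} collapses to a relation between $\mathcal J^{L^{(1)}}_{K^{[l+1]}}$ and a triangular combination of $\hat\phi^{(1)}_{2,k}$, which inverts to give the determinant with the $K^{[l+1]}$ row in \eqref{Chris12}. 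The main obstacle, I expect, is the bookkeeping in this last step: keeping straight which jets are taken in which variable, getting the sign and the index shift $l\to l+1$ right in the kernel formula, and matching the triangular connector block in \eqref{kernelChristoffelrelacion} with the $\Theta_*$ layout — the algebra of passing between the Schur-complement form and the explicit quotient-of-determinants form is routine but error-prone. Once \eqref{Chris11}–\eqref{Chris12} are in place, \eqref{Chris22}–\eqref{Chris21} follow verbatim after the substitutions $1\leftrightarrow2$, $H\leftrightarrow\bar H$, $K^{[l]}\leftrightarrow\overline{K^{[l]}}$, using Proposition \ref{ConnectionChristoffel}(iv), the second ligature, and \eqref{kernelChristoffelrelacion2}.
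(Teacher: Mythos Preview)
Your proposal is correct and follows essentially the same route as the paper: extract the $l$-th row of $\omega_1^{(1)}\phi_1=L^{(1)}\hat\phi_1^{(1)}$, impose vanishing of the spectral jet to solve for the connector entries (giving \eqref{Chris11}), read off $(\omega_1^{(1)})_{l,l}$ via the ligature to get \eqref{Chris10}, and then take the spectral jet of the kernel connection \eqref{kernelChristoffelrelacion} and project onto the last coordinate to obtain \eqref{Chris12}; the paper does exactly this (its terse ``multiply by $[1,0,\dots,0]^\top$'' for \eqref{Chris10} is your ligature argument). One caution: your side remark that $\hat\tau^{(1)}_l\neq 0$ is \emph{guaranteed} by quasidefiniteness of $\hat G^{(1)}$ is not justified in the paper and is simply taken as a standing hypothesis, so you should not assert it without proof.
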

\begin{proof} See Appendix.
\end{proof}

	We have two possible Christoffel transformations and corresponding Christoffel formulas. For the simplest diagonal situation
$\prodint{L(z),M(z)}_u=\prodint{u_z, L(z)\overline{M(z)}}$
we have these two possibilities, $\hat u^{(1)}_z=L^{(1)}(z)u_z$ or $ \hat u^{(2)}_z=\overline{L^{(2)}(z)}u_z$. If $\operatorname{supp}u_z\subset \gamma$, where the curve $\gamma=\{z
\in\mathbb C^*:\bar z=\tau(z)
\}$, we have that the perturbation of type 2 can be described as
$	 \hat u^{(2)}_z=\overline{ L^{(2)}}(\tau(z))u_z$.
For example, if we consider the unit circle $\gamma=\mathbb T$,  with $\tau(z)=z^{-1}$, this Christoffel transformation can be thought, as
$	\hat u^{(2)}_z=L^{(2)}_{*}(z)u_z$.
 Hence, taking $L^{(2)}(z)=L^{(1)}_{*}(z)$ we get that $\hat u^{(1)}_z=\hat u^{(2)}_z=L^{(1)}(z)u_z$.
In such a case, we have $\hat G^{(1)}=\hat G^{(2)}=:\hat G$, and
	$\hat S^{(1)}_1=\hat S_1^{(2)}=:\hat S_1$, $\hat S^{(1)}_2=\hat S_2^{(2)}=:\hat S_2$, and $\hat H^{(1)}=\hat H^{(2)}=:\hat H$,
so that $\hat \phi^{(1)}_1(z)=\hat \phi_1^{(2)}=:\hat \phi_1(z)$ y $\hat \phi^{(1)}_2(z)=\hat \phi_2^{(2)}=:\hat \phi_2(z)$
and we find  two alternative forms for the Christoffel formulas.
Realize also that if $P(z)=P_{2n}\prod\limits_{j=1}^d (z-\zeta_j)^{m_j}$ then
$L(z)=L_n z^{-n}\prod_{i=1}^{d}(z-\zeta_{i})^{m_{i}}$, $ \zeta_i\in\mathbb{C}\setminus \{0\}$,
where  $L_{-n}=L_n\prod_{i=1}^d\zeta_i^{m_i}$ and $2n=\sum_{i=1}^dm_i$.
	A prepared Laurent polynomial $L(z)=L_n z^{n}+\cdots + L_{-n} z^{-n}$ is such that
$	L(z)\Lambda_k\subset \Lambda_{k+2n}$ and $L(z)\Lambda_k\not\subset \Lambda_{k+2n-1 }$.

\subsection{Reductions to univariate linear functionals supported on the unit circle}
\begin{pro}\label{Christoffel FormulasCircle}
For a sesquilinear form given by an univariate linear functional and supported on the unit circle $\mathbb T$,
for 	$l\geq 2n$,   we have the following alternative Christoffel formulas, whenever the involved quasideterminants  make any sense,
\begin{align*}
\hat{\phi}_{1,l}(z)&=\frac{L_{(-1)^ln}}{L(z)}
\Theta_{*}
\begin{bmatrix}
\mathcal J^L_{\phi_{1,l}}&\phi_{1,l}(z)\\
\vdots&\vdots\\
%\mathcal J^L_{\phi_{1,l+2n-1}}&\phi_{1,l+2n-1}(z)\\
\mathcal J^L_{\phi_{1,l+2n}}& \phi_{1,l+2n}(z)
\end{bmatrix}
=\overline{\frac{\bar{\hat{H}}_{l}}{\bar  L_{(-1)^{l+1}n}}
\Theta_{*}
\begin{bmatrix}
\mathcal J^{L_*}_{\phi_{2,l+1}}&0\\
\vdots\\\mathcal J^{L_*}_{\phi_{2,l+2n-1}}&0\\
\mathcal J^{L_*}_{\phi_{2,l+2n}}&1\\
\mathcal J^{L_*}_{\overline{K^{[l+1]}}}(z) & 0
\end{bmatrix}},\\
\hat{H}_{l}&=
L_{(-1)^ln}H_l\Theta_{*}
\begin{bmatrix}
\mathcal J^L_{\phi_{1,l}}&1\\\mathcal J^L_{\phi_{1,l+1}}&0\\
\vdots&\vdots\\
%\mathcal J^L_{\phi_{1,l+2n-1}}&0
\mathcal J^L_{\phi_{1,l+2n}}& 0
\end{bmatrix}
=\overline{ \bar  L_{(-1)^{l+1}n}\Theta_{*}
\begin{bmatrix}
\mathcal J^{L_*}_{\phi_{2,l}}&1\\\mathcal J^{L_*}_{\phi_{2,l+1}}&0\\
\vdots&\vdots\\
%\mathcal J^{L_*}_{\phi_{2,l+2n-1}}&0\\
\mathcal J^{L_*}_{\phi_{2,l+2n}}& 0
\end{bmatrix}},\\
\overline{\hat \phi_{2,l}(z)}&=\frac{\hat{H}_{l}}{L_{(-1)^ln}}
\Theta_{*}
\begin{bmatrix}
\mathcal J^L_{\phi_{1,l+1}}&0\\
\vdots&\vdots\\\mathcal J^L_{\phi_{1,l+2n-1}}&0\\
\mathcal J^L_{\phi_{1,l+2n}}&1\\
\mathcal J^L_{K^{l+1}}(z) & 0
\end{bmatrix}
=\overline{\frac{ \bar  L_{(-1)^{l+1}n}}{L_*(z)}
\Theta_{*}
\begin{bmatrix}
\mathcal J^{L_*}_{\phi_{2,l}}&\phi_{2,l}(z)\\
\vdots&\vdots\\
%\mathcal J^{L_*}_{\phi_{2,l+2n-1}}&\phi_{2,l+2n-1}(z)\\
\mathcal J^{L_*}_{\phi_{2,l+2n}}& \phi_{2,l+2n}(z)
\end{bmatrix}}.
\end{align*}
\end{pro}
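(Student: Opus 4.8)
The plan is to specialize Theorem \ref{Christoffel Formulas} to the case where the bivariate linear functional is an honest univariate linear functional $u_z$ with $\operatorname{supp}(u)\subset\mathbb T$. In that situation the two Christoffel perturbations $\hat u^{(1)}_z=L(z)u_z$ and $\hat u^{(2)}_z=\overline{L^{(2)}(z)}u_z$ can be made to coincide: on $\mathbb T$ we have $\bar z=z^{-1}$, so $\overline{L^{(2)}(z)}=L^{(2)}_*(z)$, and taking $L^{(2)}(z)=L_*(z)$ gives $\overline{L^{(2)}(z)}=L_{**}(z)=L(z)$. Hence $\hat u^{(1)}_z=\hat u^{(2)}_z$, which forces $\hat G^{(1)}=\hat G^{(2)}$, and by uniqueness of the Gauss--Borel factorization $\hat S_1^{(1)}=\hat S_1^{(2)}$, $\hat S_2^{(1)}=\hat S_2^{(2)}$, $\hat H^{(1)}=\hat H^{(2)}$, and therefore $\hat\phi_1^{(1)}=\hat\phi_1^{(2)}=:\hat\phi_1$, $\hat\phi_2^{(1)}=\hat\phi_2^{(2)}=:\hat\phi_2$. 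This identification is already spelled out in the discussion preceding Proposition \ref{Christoffel FormulasCircle}, so the proof mostly consists of writing down both formula sets from the theorem and observing they describe the same perturbed objects.

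First I would record the first (type-1) column of each identity in Theorem \ref{Christoffel Formulas}: equations \eqref{Chris11}, \eqref{Chris10}, \eqref{Chris12} give $\hat\phi_{1,l}(z)$, $\hat H_l$, and $\overline{\hat\phi_{2,l}(z)}$ directly in terms of the jets $\mathcal J^{L}_{\phi_{1,\cdot}}$ and $\mathcal J^{L}_{K^{[l+1]}}(\bar z)$ — these become the left-hand expressions in the Proposition. Then I would take the type-2 formulas \eqref{Chris22}, \eqref{Chris20}, \eqref{Chris21} applied with the perturbing polynomial $L^{(2)}=L_*$, so the jets appearing are $\mathcal J^{L_*}_{\phi_{2,\cdot}}$ and $\mathcal J^{L_*}_{\overline{K^{[l+1]}}}(\bar z)$, and the leading coefficient is $L^{(2)}_{(-1)^ln}=(L_*)_{(-1)^ln}=\overline{L_{(-1)^{l}n}}$ — wait, more carefully, since $L_*(z)=\bar L(z^{-1})$ its top coefficient is $\overline{L_{-n}}$ and its bottom is $\overline{L_n}$, so $(L_*)_{(-1)^ln}=\overline{L_{(-1)^{l+1}n}}$, which is exactly the factor $\bar L_{(-1)^{l+1}n}$ written in the statement. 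The point to make explicit is the matching of unknowns: \eqref{Chris22} produces $\hat\phi^{(2)}_{2,l}=\hat\phi_{2,l}$, which we then conjugate to match $\overline{\hat\phi_{2,l}(z)}$ on the left; \eqref{Chris21} produces $\overline{\hat\phi^{(2)}_{1,l}(z)}=\overline{\hat\phi_{1,l}(z)}$, which after conjugation is $\hat\phi_{1,l}(z)$; and \eqref{Chris20} gives $\bar{\hat H}^{(2)}_l=\bar{\hat H}_l$, whose conjugate is $\hat H_l$. So the right-hand sides of the three displayed identities in the Proposition are precisely the conjugates of the type-2 Christoffel formulas, and the left-hand sides are the type-1 Christoffel formulas, and the two sides are equal because both compute the same perturbed quantity. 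That is the whole argument.

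The one genuinely careful step — and the only place the circle geometry enters nontrivially beyond $\overline{L^{(2)}}=L_*$ — is checking that the spectral-jet conventions in the mixed terms are consistent. In Theorem \ref{Christoffel Formulas} the jet $\mathcal J^{L^{(1)}}_{K^{[l+1]}}(\bar z)$ is taken in the variable $z_2$ of $K^{[l+1]}(\bar z_1,z_2)$ with $\bar z_1=\bar z$ held fixed, whereas $\mathcal J^{L^{(2)}}_{\overline{K^{[l+1]}}}(\bar z)$ is taken in $z_1$ of $\overline{K^{[l+1]}(\bar z_1,z_2)}$; on the circle, with the functional collapsing to univariate, one must verify that the zeros of $L_*$ are the complex conjugates of the inverses of the zeros of $L$ and that the jet of $\overline{K^{[l+1]}}$ along $L_*$ at those points is the complex conjugate of the jet of $K^{[l+1]}$ along $L$ — this is where $\zeta\mapsto\bar\zeta^{-1}$ and the reflection $P\mapsto P^*$ conspire. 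Once this bookkeeping is in place, I would also note the consistency constraint between the two expressions for $\hat H_l$ — namely $\hat\tau^{(1)}_{l+1}/\hat\tau^{(1)}_l$ on one side and $\overline{\hat\tau^{(2)}_{l+1}/\hat\tau^{(2)}_l}$ on the other must agree, which is forced since both equal $\hat H_l/(L_{(-1)^ln}H_l)$ up to the explicit scalar — and conclude. I expect the main obstacle to be purely notational: keeping the conjugations, the $(-1)^l$ parity shifts in the subscripts of $L$, and the two different jet conventions straight; there is no new analytic content beyond Theorem \ref{Christoffel Formulas} and the reciprocal-polynomial identities.
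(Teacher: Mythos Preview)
Your proposal is correct and follows essentially the same approach as the paper: the proposition is a direct specialization of Theorem \ref{Christoffel Formulas}, and the paper's own ``proof'' is precisely the discussion you cite preceding the statement, which identifies $\hat u^{(1)}=\hat u^{(2)}$ on $\mathbb T$ via $L^{(2)}=L_*$ and then invokes uniqueness of the Gauss--Borel factorization. Your coefficient computation $(L_*)_{(-1)^ln}=\overline{L_{(-1)^{l+1}n}}$ is exactly the bookkeeping needed, and your caution about the jet conventions is appropriate but not strictly required, since the type-2 formulas in the theorem are already phrased in terms of jets along $L^{(2)}=L_*$ directly.
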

However, this identification does not hold in general, in which case we have two different Christoffel transformations ---multiplying by a Laurent polynomial $L^{(1)}(z)$,  or multiplying by a Laurent polynomial composed with the supporting curve function $\tau(z)$, $\overline{L^{(2)}} \circ \tau (z) $---  for example, multiplying by
$\overline{L^{(2)}}\Big(\sqrt{a^2+\frac{b^4}{z^2-a^2}}\Big )$ for the Cassini oval and $\overline{L^{(2)}} \Big(2 a^{3/2}\sqrt{(a-z)^3}+a^2\frac{3z-2a}{z^2}\Big)$ for the cardioid.
If the original functional is nonnegative and  biorthogonality  simplifies to orthogonality with $\phi_{1,l}(z)=\phi_{2,l}(z)=:\phi_l(z)$  we could consider a Christoffel transformation with a prepared polynomial $L(z)$ which is equal to its reciprocal, $L_*(z)=L(z)$,  and such that $L(z)=|L(z)|$ for $|z|\in\mathbb T$, i.e. a nonnegative trigonometrical polynomial when evaluated over the circle, and find
\begin{align*}
\hat{\phi}_{l}(z)&=\frac{L_{(-1)^ln}}{L(z)}
\Theta_{*}
\begin{bmatrix}
\mathcal J^L_{\phi_{l}}&\phi_{l}(z)\\
\vdots&\vdots\\
%\mathcal J^L_{\phi_{l+2n-1}}&\phi_{l+2n-1}(z)\\
\mathcal J^L_{\phi_{l+2n}}& \phi_{l+2n}(z)
\end{bmatrix}
=\overline{\frac{\bar{\hat{H}}_{l}}{\bar  L_{(-1)^{l+1}n}}
	\Theta_{*}
	\begin{bmatrix}
	\mathcal J^{L}_{\phi_{l+1}}&0\\
	\vdots&\vdots\\\mathcal J^{L}_{\phi_{l+2n-1}}&0\\
	\mathcal J^{L}_{\phi_{l+2n}}&1\\
		\mathcal J^{L}_{\overline{K^{[l+1]}}}(z) & 0
	\end{bmatrix}},\end{align*}
\begin{align*}
\hat{H}_{l}&=
L_{(-1)^ln}\Theta_{*}
\begin{bmatrix}
\mathcal J^L_{\phi_{l}}&H_l\\\mathcal J^L_{\phi_{l+1}}&0\\
\vdots&\vdots\\
%\mathcal J^L_{\phi_{l+2n-1}}&0\\
\mathcal J^L_{\phi_{l+2n}}& 0
\end{bmatrix}
=\overline{ \bar  L_{(-1)^{l+1}n}\Theta_{*}
	\begin{bmatrix}
	\mathcal J^{L}_{\phi_{l}}&\bar H_l\\\mathcal J^L_{\phi_{l+1}}&0\\
	\vdots&\vdots\\
%	\mathcal J^{L}_{\phi_{l+2n-1}}&0\\
		\mathcal J^{L}_{\phi_{l+2n}}& 0
	\end{bmatrix}}.
\end{align*}
Notice that, in terms of $\hat \tau_l:=\begin{vsmallmatrix}
\mathcal J^{L}_{\phi_{l}}\\\mathcal J^L_{\phi_{l+1}}\\
\vdots\\
\mathcal J^{L}_{\phi_{l+2n-1}}
\end{vsmallmatrix}$, the last relation gives $\hat H_l=L_{(-1)^ln}H_l\frac{\tau_{l+1}}{\hat \tau_l}=L_{(-1)^{l+1}n}H_l\frac{\bar {\hat\tau}_{l+1}}{\bar{ \hat\tau}_l}$, which implies
$L_{(-1)^ln}\hat \tau_{l+1}\bar {\hat\tau}_l=L_{(-1)^{l+1}n}\bar {\hat\tau}_{l+1}\hat\tau_l$. We also have
\begin{align*}
\hat{\phi}_{l}(z)&=\frac{L_{(-1)^ln}}{L(z)}
\frac{\begin{vmatrix}
\mathcal J^L_{\phi_{l}} &\phi_{l}(z)\\
\vdots&\vdots\\
%\mathcal J^L_{\phi_{l+2n-1}} &\phi_{l+2n-1}(z)\\
\mathcal J^L_{\phi_{l+2n}}& \phi_{l+2n}(z)
\end{vmatrix}}{\hat\tau_l}
=H_l\frac{\overline{
	\begin{vmatrix}
	\mathcal J^{L}_{\phi_{l+1}}\\
	\vdots\\\mathcal J^{L}_{\phi_{l+2n-1}}\\
	\mathcal J^{L}_{\overline{K^{[l+1]}}}(z)
	\end{vmatrix}}}{\bar{\hat \tau}_l}.\end{align*}

\section{Geronimus transformations}

For this transformation we need a bivariate linear  functional  with  a well-defined support.
\begin{defi}
	Given Laurent polynomials $L^{(a)}(z)=L^{(a)}_{n} z^n+\dots+L^{(a)}_{-m} z^{-m}$, with $L^{(a)}_{n}L^{(a)}_{-m}\neq 0$ and $n,m\in\{0,1,2,\dots\}$,  $a\in\{1,2\}$,  and $\sigma(L^{(1)}(z))\cap\operatorname{supp}_1u=\varnothing$,
	$\overline{\sigma(L^{(2)}(z))}\cap\operatorname{supp}_2u=\varnothing$, we consider two possible families of Geronimus transformations $\check u^{(1)}_{z_1,\bar z_2}$ y   $\check u^{(2)}_{z_1,\bar z_2}$, of the bivariate linear functional $u_{z_1,\bar z_2}$ characterized by
	\begin{align}
\label{Ger1}
	L^{(1)}(z_1)\check u^{(1)}_{z_1,\bar z_2}&=u_{z_1,\bar z_2},  \\
\label{Ger2}	\check u^{(2)}_{z_1,\bar z_2}\overline{L^{(2)}(z_2)}&=u_{z_1,\bar z_2} .
	\end{align}
\end{defi}

Therefore, the perturbed bivariate linear  functionals   are
\begin{align*}
\check u^{(1)}_{z_1,\bar z_2}&=\frac{u_{z_1,\bar z_2}}{L^{(1)}(z_1)}+\sum_{i=1}^{d^{(1)}}\sum_{l=0}^{m^{(1)}_{i}-1}\frac{(-1)^{l}}{l!}\delta^{(l)}\big(z_1-\zeta^{(1)}_{i}\big)
\otimes \overline{(\xi^{(1)}_{i,l})_{ z_2}},  \\
\check u^{(2)}_{z_1,\bar z_2}&=\frac{u_{z_1,\bar z_2}}{\overline{L^{(2)}(z_2)}}+
\sum_{i=1}^{d^{(2)}}\sum_{l=0}^{m^{(2)}_{i}-1}\frac{(-1)^{l}}{l!}(\xi^{(2)}_{i,l})_{z_1}\otimes\overline{\delta^{(l)}\big(z_2- \zeta^{(2)}_{i}\big)},
\end{align*}
where  $(\xi^{(1)}_{i,l})_{z}$ y $ (\xi^{(2)}_{i,l})_{ z}$ are univariate  linear functionals.

\begin{pro}
	Geronimus transformations associated with the  perturbing Laurent polynomials $ L^{(1)}(z) $ and ${L^{(2)}(z)}$ imply for the corresponding Gram matrices
	the following relations
\begin{align}\label{GGeronimus}
L^{(1)}(\Upsilon)	\check  G^{(1)}&=G, & \check G^{(2)}\big(L^{(2)}(\Upsilon)\big)^\dagger&=G.
	\end{align}
\end{pro}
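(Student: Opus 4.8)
The plan is to mimic exactly the computation already carried out for the Christoffel transformations in the proof of the proposition establishing \eqref{gChristoffelNice}, simply transporting the multiplication by the perturbing Laurent polynomial to the defining relations \eqref{Ger1} and \eqref{Ger2}. First I would write the Gram matrix of the perturbed functional $\check u^{(1)}_{z_1,\bar z_2}$ in the CMV basis, $\check G^{(1)}=\prodint{\check u^{(1)}_{z_1,\bar z_2},\chi(z_1)\otimes(\chi(z_2))^\dagger}$, and then apply $L^{(1)}(\Upsilon)$ on the left. Using the spectral property $\Upsilon\chi(z)=z\chi(z)$, hence $L^{(1)}(\Upsilon)\chi(z_1)=L^{(1)}(z_1)\chi(z_1)$, one gets
\begin{align*}
L^{(1)}(\Upsilon)\check G^{(1)}&=\prodint{\check u^{(1)}_{z_1,\bar z_2},L^{(1)}(z_1)\chi(z_1)\otimes(\chi(z_2))^\dagger}\\
&=\prodint{L^{(1)}(z_1)\check u^{(1)}_{z_1,\bar z_2},\chi(z_1)\otimes(\chi(z_2))^\dagger}=\prodint{u_{z_1,\bar z_2},\chi(z_1)\otimes(\chi(z_2))^\dagger}=G,
\end{align*}
where the key third equality is precisely \eqref{Ger1}. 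The symmetric computation for $\check G^{(2)}$ multiplies on the right by $(L^{(2)}(\Upsilon))^\dagger$, uses $(\chi(z_2))^\dagger(L^{(2)}(\Upsilon))^\dagger=\overline{L^{(2)}(z_2)}(\chi(z_2))^\dagger$ together with the antilinearity of the sesquilinear form in the second slot, and then invokes \eqref{Ger2} to replace $\check u^{(2)}_{z_1,\bar z_2}\overline{L^{(2)}(z_2)}$ by $u_{z_1,\bar z_2}$, yielding $\check G^{(2)}(L^{(2)}(\Upsilon))^\dagger=G$.

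The one genuine subtlety, and the step I expect to require the most care, is justifying that the manipulation ``pull $L^{(1)}(z_1)$ inside the pairing and regroup it with the functional'' is legitimate when $\check u^{(1)}$ contains the singular mass terms $\sum_{i,l}\frac{(-1)^l}{l!}\delta^{(l)}(z_1-\zeta^{(1)}_i)\otimes\overline{(\xi^{(1)}_{i,l})_{z_2}}$. Here one must check that the product $L^{(1)}(z_1)\delta^{(l)}(z_1-\zeta^{(1)}_i)$, expanded by the Leibniz rule, annihilates the mass contributions — which is exactly why the masses are attached to derivatives of Dirac deltas supported at the zeros $\zeta^{(1)}_i$ of $L^{(1)}$ with the stated multiplicities $m^{(1)}_i$: since $L^{(1)}$ vanishes to order $m^{(1)}_i$ at $\zeta^{(1)}_i$ and the highest derivative appearing is $\delta^{(m^{(1)}_i-1)}$, every term $L^{(1)}(z_1)\,\delta^{(l)}(z_1-\zeta^{(1)}_i)$ with $l\le m^{(1)}_i-1$ is zero as a distribution. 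Thus $L^{(1)}(z_1)\check u^{(1)}_{z_1,\bar z_2}=L^{(1)}(z_1)\tfrac{u_{z_1,\bar z_2}}{L^{(1)}(z_1)}=u_{z_1,\bar z_2}$, which is the content of \eqref{Ger1}; the analogous cancellation in the second variable (now with $\overline{L^{(2)}(z_2)}$ acting on $\delta^{(l)}(z_2-\zeta^{(2)}_i)$) handles $\check u^{(2)}$. With this distributional identity in hand the matrix identities follow immediately from the CMV spectral relation for $\Upsilon$, exactly as in the Christoffel case.

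Consequently the proof is a two-line chain of equalities in each case once the distributional bookkeeping is settled, and it can be presented in essentially the same format as the proof of \eqref{gChristoffelNice}. I would simply write out the two chains:
\begin{align*}
L^{(1)}(\Upsilon)\check G^{(1)}&=\prodint{u_{z_1,\bar z_2},\chi(z_1)\otimes(\chi(z_2))^\dagger}=G,\\
\check G^{(2)}(L^{(2)}(\Upsilon))^\dagger&=\prodint{u_{z_1,\bar z_2},\chi(z_1)\otimes(\chi(z_2))^\dagger}=G,
\end{align*}
with the intermediate steps as above, remarking that the support hypotheses $\sigma(L^{(1)})\cap\operatorname{supp}_1u=\varnothing$ and $\overline{\sigma(L^{(2)})}\cap\operatorname{supp}_2u=\varnothing$ are what guarantee that $u_{z_1,\bar z_2}/L^{(1)}(z_1)$ and $u_{z_1,\bar z_2}/\overline{L^{(2)}(z_2)}$ are well-defined distributions, so that the whole computation makes sense.
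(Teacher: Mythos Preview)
Your proof is correct and follows essentially the same route as the paper: both arguments are the short chain of equalities combining the defining relation \eqref{Ger1} (respectively \eqref{Ger2}) with the spectral property $L^{(a)}(\Upsilon)\chi(z)=L^{(a)}(z)\chi(z)$. The paper simply invokes \eqref{Ger1} and \eqref{Ger2} directly without unpacking the distributional cancellation of the mass terms that you spell out; your extra paragraph on why $L^{(1)}(z_1)\delta^{(l)}(z_1-\zeta^{(1)}_i)=0$ for $l\le m^{(1)}_i-1$ is a verification that the explicit form of $\check u^{(1)}$ does satisfy \eqref{Ger1}, which the paper treats as definitional.
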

\begin{proof}
It follows from
	\begin{align*}
	G&=\prodint{ u_{z_1,\bar z_2},\chi(z_1)\otimes (\chi(z_2))^\dagger}\\
	&=\prodint{ \check u^{(1)}_{z_1,\bar z_2},L^{(1)}(z_1)\chi(z_1)\otimes (\chi(z_2))^\dagger}
	\\
	&=L^{(1)}(\Upsilon)\prodint{ \check u^{(1)}_{z_1,\bar z_2},\chi(z_1)\otimes (\chi(z_2))^\dagger},
	\\
G&=\prodint{\check u^{(2)}_{z_1,\bar z_2},\chi(z_1)\otimes (\chi(z_2))^\dagger}\\
	&=\prodint{ u_{z_1,\bar z_2},\chi(z_1)\otimes (\chi(z_2))^\dagger \overline{L^{(2)}(z_2)}}
	\\
	&=\prodint{ u^{(2)}_{z_1,\bar z_2},\chi(z_1)\otimes (\chi(z_2))^\dagger} \big(L^{(2)}(\Upsilon)\big)^\dagger.
	\end{align*}
\end{proof}

When both Gram matrices  are quasidefinite, the following  Gauss--Borel factorizations do exist
\begin{align}\label{eq:quasidefGramGer}
\check G^{(1)}&=\big(\check S_1^{(1)}\big)^{-1}\check H^{(1)}\big(\check S_2^{(1)}\big)^{-\dagger}, &
\check  G^{(2)}&=\big(\check S_1^{(2)}\big)^{-1}\check H^{(2)}\big(\check S_2^{(2)}\big)^{-\dagger}.
\end{align}

\subsection{Connection formulas}

\begin{defi}[Geronimus connectors] Consider a bivariate linear functional and Geronimus transformations associated with  two Laurent polynomial $ L^{(a)} (z) $, $a=1,2$. We define the following connectors
	\begin{align*}
	\Omega^{(1)}_1&= S_1 L^{(1)}(\Upsilon)
	(\check S_1^{(1)})^{-1}, & 	\Omega^{(1)}_2&=\check S_2^{(1)}( S_2)^{-1},\\
	\Omega^{(2)}_1&=\check S_1^{(2)}( S_1)^{-1}, &	\Omega^{(2)}_2&= S_2 L^{(2)}(\Upsilon)
	(\check S_2^{(2)})^{-1}.
	\end{align*}
\end{defi}
\begin{pro}
Geronimus connectors satisfy
$	\Omega^{(1)}_1 \check H^{(1)}=H \big(\Omega^{(1)}_2\big)^\dagger$ and $	\check H^{(2)} \big(\Omega^{(2)}_2\big)^\dagger = \Omega^{(2)}_1 H$.
\end{pro}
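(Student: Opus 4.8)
The plan is to replicate, with the obvious changes, the argument already used for the Christoffel ligatures: I would combine the Gram matrix relations \eqref{GGeronimus} with the Gauss--Borel factorizations \eqref{LU} and \eqref{eq:quasidefGramGer}, and then push the unitriangular factors $S_1,S_2,\check S^{(a)}_1,\check S^{(a)}_2$ to the outside so that the definitions of the Geronimus connectors can be read off.

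For the type-$1$ ligature I would start from $L^{(1)}(\Upsilon)\check G^{(1)}=G$ and insert $G=S_1^{-1}H(S_2^{-1})^\dagger$ together with $\check G^{(1)}=(\check S_1^{(1)})^{-1}\check H^{(1)}(\check S_2^{(1)})^{-\dagger}$, obtaining
\[
L^{(1)}(\Upsilon)(\check S_1^{(1)})^{-1}\check H^{(1)}(\check S_2^{(1)})^{-\dagger}=S_1^{-1}H(S_2)^{-\dagger}.
\]
Left multiplication by $S_1$ turns the leftmost block into $\Omega^{(1)}_1=S_1L^{(1)}(\Upsilon)(\check S_1^{(1)})^{-1}$; right multiplication by $(\check S_2^{(1)})^\dagger$, together with $(\check S_2^{(1)}(S_2)^{-1})^\dagger=(S_2)^{-\dagger}(\check S_2^{(1)})^\dagger$, turns the trailing factor into $(\Omega^{(1)}_2)^\dagger$, which yields $\Omega^{(1)}_1\check H^{(1)}=H(\Omega^{(1)}_2)^\dagger$.

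The type-$2$ ligature is the mirror image: from $\check G^{(2)}(L^{(2)}(\Upsilon))^\dagger=G$ and the corresponding factorizations one gets
\[
(\check S_1^{(2)})^{-1}\check H^{(2)}(\check S_2^{(2)})^{-\dagger}(L^{(2)}(\Upsilon))^\dagger=S_1^{-1}H(S_2)^{-\dagger},
\]
and then left multiplication by $\check S_1^{(2)}$ exposes $\Omega^{(2)}_1=\check S_1^{(2)}(S_1)^{-1}$ on the right-hand side, while right multiplication by $(S_2)^\dagger$, using $(S_2L^{(2)}(\Upsilon)(\check S_2^{(2)})^{-1})^\dagger=(\check S_2^{(2)})^{-\dagger}(L^{(2)}(\Upsilon))^\dagger(S_2)^\dagger$, collapses the left-hand side to $\check H^{(2)}(\Omega^{(2)}_2)^\dagger$; hence $\check H^{(2)}(\Omega^{(2)}_2)^\dagger=\Omega^{(2)}_1H$.

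I do not expect any genuine obstacle: the statement is a pure rearrangement of the defining relations. The only point needing care is the adjoint bookkeeping, since \eqref{LU} places the dagger on the $S_2$-factor while the connectors $\Omega^{(a)}_2$ are defined without one, and \eqref{GGeronimus} already carries a dagger on $L^{(2)}(\Upsilon)$; one must apply $(ABC)^\dagger=C^\dagger B^\dagger A^\dagger$ consistently at each step. This is exactly parallel to the proof of the corresponding ligatures for the Christoffel connectors.
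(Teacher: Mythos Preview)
Your proposal is correct and follows essentially the same route as the paper: substitute the Gauss--Borel factorizations \eqref{LU} and \eqref{eq:quasidefGramGer} into the Gram relations \eqref{GGeronimus}, then clear the unitriangular factors by multiplying on the left and right so that the definitions of $\Omega^{(a)}_1$ and $(\Omega^{(a)}_2)^\dagger$ appear. Your handling of the adjoint bookkeeping is in fact more explicit than the paper's own proof.
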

\begin{proof}
According to \eqref{GGeronimus}  and  \eqref{eq:quasidefGramGer} one has
	\begin{align*}
	L^{(1)}(\Upsilon)\big(\check  S_1^{(1)}\big)^{-1}\check H^{(1)}\big(\check S_2^{(1)}\big)^{-\dagger}&=(S_1)^{-1}H(S_2)^{-\dagger},&
	\big(\check S_1^{(2)}\big)^{-1}\check H^{(2)}\big(\check S_2^{(2)}\big)^{-\dagger}L^{(2)}(\Upsilon)&=(S_1)^{-1}H(S_2)^{-\dagger}.
	\end{align*}
so that
	\begin{align*}
S_1	L^{(1)}(\Upsilon)\big(\check  S_1^{(1)}\big)^{-1}\check H^{(1)}&=H(S_2)^{-\dagger}\big(\check S_2^{(1)}\big)^{\dagger},&
	\check H^{(2)}\big(\check S_2^{(2)}\big)^{-\dagger}L^{(2)}(\Upsilon)(S_2)^{-\dagger}&=\check S_1^{(2)}(S_1)^{-1}H.
	\end{align*}
\end{proof}
Therefore, since the factors $ S $ are lower unitriangular matrices we conclude the
\begin{pro}
	\begin{enumerate}
\item The connector $ \Omega ^ {(1)} _ 1 $ is an upper triangular matrix with only $ 2\underline{n} + 1 $ nonzero upper above.
% Addition $ (\ Omega ^ {(1)} _ 1) _ {k, k} = H_k (\ check H_k ^ {(1)}) ^ {- 1}. $% And $ (\ omega ^ {(1 )} _ 1) _ {2k, 2k + 2n} = \ alpha $
\item The connector $ \Omega ^ {(1)} _ 2 $ is a lower unitriangular matrix with only $ 2\underline{n} + 1 $ nonzero lower diagonals.

\item The connector $ \Omega ^ {(2)} _ 1 $ is a lower unitriangular matrix with only $ 2\underline{n} + 1 $ nonzero  diagonals below.
\item The connector $ \Omega ^ {(2)} _ 2 $ is an upper triangular matrix with only $ 2\underline{n}+ 1 $ nonzero upper diagonals.
\item We have the formulas
		\begin{align}\label{Omegalambda}
			( \Omega^{(1)}_2)_{k,k-2\underline{n}}&=\overline{ L^{(1)}_{(-1)^k\underline{n}}}		\dfrac{\bar {\check H}^{(1)}_k}{ \bar H_{k-2\underline{n}}}, &
		(\Omega^{(2)}_1)_{k,k-2\underline{n}}&=\overline{ L^{(2)}_{(-1)^k\underline{n}}}		\dfrac{\check H^{(2)}_k}{H_{k-2\underline{n}}}.
%		& \lambda_{i,k}:=\begin{cases}
%	L_{i,n}, & \text{$k$ is even},\\
%L_{i,-n}, & \text{$k$ is odd}.
%		\end{cases}
		\end{align}
	%	Además $(\bar\Omega^{(2)}_2)_{k,k}= H_k(\check H_2^{(2)})^{-1}$. %y $(\omega^{(1)}_1)_{2k+1,2k+1+2m}=\beta$.
	\end{enumerate}\end{pro}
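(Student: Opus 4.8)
The plan is to mimic, step by step, the treatment of the Christoffel connectors in Propositions~\ref{ConnectionChristoffel} and~\ref{ConnectionChristoffelCD}. \emph{Triangularity.} Since all the Gauss--Borel factors in \eqref{eq:quasidefGramGer} and in \eqref{LU} are lower unitriangular, the products $\Omega^{(1)}_2=\check S_2^{(1)}(S_2)^{-1}$ and $\Omega^{(2)}_1=\check S_1^{(2)}(S_1)^{-1}$ are lower unitriangular, which is already items (ii) and (iii). For (i) and (iv) I would invoke the ligatures just established: from $\Omega^{(1)}_1\check H^{(1)}=H(\Omega^{(1)}_2)^{\dagger}$ one gets $\Omega^{(1)}_1=H(\Omega^{(1)}_2)^{\dagger}(\check H^{(1)})^{-1}$, an upper triangular matrix because $(\Omega^{(1)}_2)^{\dagger}$ is upper unitriangular and the $H$'s are diagonal; symmetrically $(\Omega^{(2)}_2)^{\dagger}=(\check H^{(2)})^{-1}\Omega^{(2)}_1H$ is lower triangular, so $\Omega^{(2)}_2$ is upper triangular.

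\emph{Band count.} By the spectral identity $\Upsilon\chi(z)=z\chi(z)$ the matrix $L^{(a)}(\Upsilon)$ is characterized by $L^{(a)}(\Upsilon)\chi(z)=L^{(a)}(z)\chi(z)$, and reading off the explicit CMV basis (as in the inclusion $L(z)\Lambda_k\subset\Lambda_{k+2\underline{n}}$ used before Theorem~\ref{Christoffel Formulas}, together with the mirror estimate for the most negative powers) one sees that $(L^{(a)}(\Upsilon))_{i,j}=0$ whenever $|i-j|>2\underline{n}$. Rewriting the definition as $\Omega^{(1)}_1=\big(S_1L^{(1)}(\Upsilon)\big)(\check S_1^{(1)})^{-1}$ with $S_1$ and $(\check S_1^{(1)})^{-1}$ lower unitriangular, the product acquires nonzero entries only on the main and the first $2\underline{n}$ upper diagonals, which together with the previous step is (i); then $(\Omega^{(1)}_2)^{\dagger}=H^{-1}\Omega^{(1)}_1\check H^{(1)}$ carries the same upper band, giving the $2\underline{n}+1$ lower diagonals of (ii). The identical argument applied to $\Omega^{(2)}_2=\big(S_2L^{(2)}(\Upsilon)\big)(\check S_2^{(2)})^{-1}$ and to the second ligature yields (iv) and (iii).

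\emph{Extreme diagonals.} To get \eqref{Omegalambda} I would compare, in $\Omega^{(1)}_1\check S_1^{(1)}=S_1L^{(1)}(\Upsilon)$, the entries in position $(k-2\underline{n},k)$: on the left only the diagonal term of $\check S_1^{(1)}$ contributes (because $\Omega^{(1)}_1$ has upper band $2\underline{n}$), so that entry is $(\Omega^{(1)}_1)_{k-2\underline{n},k}$; on the right only the diagonal term of $S_1$ contributes, so it equals $(L^{(1)}(\Upsilon))_{k-2\underline{n},k}$. Multiplying $\chi^{(k-2\underline{n})}(z)$ by $L^{(1)}(z)$ and reading off the coefficient of $\chi^{(k)}(z)$ gives $(L^{(1)}(\Upsilon))_{k-2\underline{n},k}=L^{(1)}_{(-1)^{k}\underline{n}}$ --- this is exactly the computation recorded in the Remark preceding Theorem~\ref{Christoffel Formulas}, with the convention that $L^{(1)}_{\pm\underline{n}}$ vanishes when $\pm\underline{n}$ lies outside the exponent range of $L^{(1)}$. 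Finally, taking the $(k-2\underline{n},k)$ entry of the ligature $\Omega^{(1)}_1\check H^{(1)}=H(\Omega^{(1)}_2)^{\dagger}$ and conjugating gives $(\Omega^{(1)}_2)_{k,k-2\underline{n}}=\overline{(H_{k-2\underline{n}})^{-1}(\Omega^{(1)}_1)_{k-2\underline{n},k}\check H^{(1)}_k}$, which is the first identity in \eqref{Omegalambda}; the second one is obtained in the same way from $\Omega^{(2)}_2\check S_2^{(2)}=S_2L^{(2)}(\Upsilon)$ and $\check H^{(2)}(\Omega^{(2)}_2)^{\dagger}=\Omega^{(2)}_1H$.

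The one point that needs actual care is the value $(L^{(a)}(\Upsilon))_{k-2\underline{n},k}=L^{(a)}_{(-1)^{k}\underline{n}}$: one must track, in the CMV ordering, which of the leading coefficient $L^{(a)}_{\underline{n}}$ or the trailing coefficient $L^{(a)}_{-\underline{n}}$ produces the extreme band entry according to the parity of the index, and check that the band has width exactly $2\underline{n}$ (and not smaller) for at least one parity. Everything else is the triangular/banded matrix bookkeeping that already underlies the Christoffel case, so I expect no surprises there.
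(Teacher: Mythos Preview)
Your argument is correct and is exactly the route the paper intends: the paper's own proof is the single clause ``since the factors $S$ are lower unitriangular matrices we conclude\ldots'', and you have simply spelled out that bookkeeping --- lower unitriangularity of $\Omega^{(1)}_2,\Omega^{(2)}_1$ from their definition, upper triangularity of $\Omega^{(1)}_1,\Omega^{(2)}_2$ from the ligatures, the $2\underline n$ superdiagonal bound from $L(z)\Lambda_k\subset\Lambda_{k+2\underline n}$, and the extremal entries from matching the $(k-2\underline n,k)$ coefficient with $(L^{(a)}(\Upsilon))_{k-2\underline n,k}=L^{(a)}_{(-1)^k\underline n}$. One small cosmetic point: you only need, and only use, the \emph{upper} band estimate $(L^{(a)}(\Upsilon))_{i,j}=0$ for $j>i+2\underline n$; the full two--sided band $|i-j|\le 2\underline n$ that you state in passing is not required (and can fail for small indices), so you may want to phrase that step accordingly.
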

The connectors are matrices that establish the relationships between the perturbed and original families of  biorthogonal  Laurent polynomials	
\begin{pro}[Connection formulas for the CMV Laurent orthogonal polynomials]\label{Geronimus Connection Laurent}
The following connection formulas hold
		\begin{align*}
		\Omega^{(1)}_1  \check\phi_1^{(1)}(z) &= L^{(1)}(z)\phi_1(z), & \Omega^{(1)}_2 \phi_2(z)&=\check\phi^{(1)}_2(z),\\
		\Omega_1^{(2)} \phi_1(z)&=\check\phi^{(2)}_1(z), & \Omega^{(2)}_2\check\phi^{(2)}_2(z)&=L^{(2)}(z) \phi_2(z).
		\end{align*}
	\end{pro}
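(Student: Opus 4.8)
The plan is to mirror the proof strategy used for the Christoffel connection formulas in Proposition~\ref{pro:conexi�n}, transporting everything through the Geronimus relations \eqref{GGeronimus} rather than \eqref{gChristoffelNice}. First I would recall the definitions of the objects involved: by \eqref{P} we have $\phi_1(z)=S_1\chi(z)$ and $\phi_2(z)=S_2\chi(z)$, and by the analogous definition for the perturbed quantities $\check\phi_1^{(1)}(z)=\check S_1^{(1)}\chi(z)$, $\check\phi_2^{(1)}(z)=\check S_2^{(1)}\chi(z)$, and similarly for the superscript $(2)$ family. The key auxiliary fact is the spectral property of $\Upsilon$ stated right after its definition, namely $\Upsilon\chi(z)=z\chi(z)$, which immediately gives $L^{(a)}(\Upsilon)\chi(z)=L^{(a)}(z)\chi(z)$ for any Laurent polynomial $L^{(a)}$ (expanding $L^{(a)}(\Upsilon)$ as a finite sum of powers $\Upsilon^k$ and using $\Upsilon^{-1}\chi(z)=z^{-1}\chi(z)$ for the negative powers).

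With these in hand the proof is a short computation for each of the four identities. For the first one, start from the definition $\Omega^{(1)}_1=S_1L^{(1)}(\Upsilon)(\check S_1^{(1)})^{-1}$ and apply it to $\check\phi_1^{(1)}(z)=\check S_1^{(1)}\chi(z)$:
\begin{align*}
\Omega^{(1)}_1\check\phi_1^{(1)}(z)=S_1L^{(1)}(\Upsilon)(\check S_1^{(1)})^{-1}\check S_1^{(1)}\chi(z)=S_1L^{(1)}(\Upsilon)\chi(z)=S_1\,L^{(1)}(z)\chi(z)=L^{(1)}(z)\phi_1(z),
\end{align*}
where the last step uses that $L^{(1)}(z)$ is a scalar and commutes with the matrix $S_1$. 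For the second identity, from $\Omega^{(1)}_2=\check S_2^{(1)}(S_2)^{-1}$ one gets $\Omega^{(1)}_2\phi_2(z)=\check S_2^{(1)}(S_2)^{-1}S_2\chi(z)=\check S_2^{(1)}\chi(z)=\check\phi_2^{(1)}(z)$, with no need for the $\Upsilon$-property at all. The two identities with superscript $(2)$ are entirely symmetric: $\Omega_1^{(2)}\phi_1(z)=\check S_1^{(2)}(S_1)^{-1}S_1\chi(z)=\check\phi_1^{(2)}(z)$, and $\Omega^{(2)}_2\check\phi^{(2)}_2(z)=S_2L^{(2)}(\Upsilon)(\check S_2^{(2)})^{-1}\check S_2^{(2)}\chi(z)=S_2L^{(2)}(z)\chi(z)=L^{(2)}(z)\phi_2(z)$.

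There is no real obstacle here; the only point requiring a little care is the well-definedness of the connectors themselves, i.e. that $\check S_1^{(1)}$, $\check S_2^{(1)}$, etc.\ are invertible — but this is guaranteed by the standing quasidefiniteness assumption on the perturbed Gram matrices, which yields the Gauss--Borel factorizations \eqref{eq:quasidefGramGer} with unitriangular (hence invertible) $S$-factors. One should also note, for consistency, that the algebraic relations \eqref{GGeronimus} together with \eqref{eq:quasidefGramGer} are exactly what force the ligatures $\Omega^{(1)}_1\check H^{(1)}=H(\Omega^{(1)}_2)^\dagger$ and $\check H^{(2)}(\Omega^{(2)}_2)^\dagger=\Omega^{(2)}_1H$ established just above, so the connection formulas and the ligatures are two facets of the same factorization identity; but the connection formulas for the Laurent polynomials, as shown, follow purely from the definitions of the connectors and the eigen-relation for $\Upsilon$, and do not even need \eqref{GGeronimus}.
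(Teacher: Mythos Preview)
Your proof is correct and is exactly the natural approach; the paper in fact states this proposition (like its Christoffel analogue, Proposition~\ref{pro:conexi�n}) without proof, precisely because it follows immediately from the definitions of the connectors together with the eigen-relation $L^{(a)}(\Upsilon)\chi(z)=L^{(a)}(z)\chi(z)$, just as you have written out.
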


\begin{defi}
Given a Laurent polynomial $L(z)$ we consider
$	\delta L (z_1,z_2):=\frac{L(z_1)-L(z_2)}{z_1-z_2}$.
and introduce the completely homogeneous symmetric polynomials
$
	h_j(z_1,z_2):=(z_1)^j+(z_1)^{j-1}z_2+\dots +z_1(z_2)^{j-1}+(z_2)^{j}$ 
and their duals $h^*_{j}(z_1,z_2):=(z_1z_2)^{-1}h_{j}\big((z_1)^{-1},(z_2)^{-1}\big)$
with $ j\in\{0,1,2,\dots\}$.
\end{defi}

\begin{pro}\label{simetricos}
It is true that
$\delta L(z_1,z_2)
	=\sum_{j=1}^{n} L_j  h_{j-1}(z_1, z_2)-\sum_{j=1}^{m}L_{-j} h^*_{j-1}(z_1, z_2)$
and therefore the bivariate Laurent polynomial $ \delta L (z_1, z_2) $ is symmetrical and, fixing one of the variables, is a  Laurent polynomial   in the other variable of positive maximum degree $ n-1 $ and  negative degree  $-m $.
\end{pro}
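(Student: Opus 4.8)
\emph{Proof proposal.} The plan is to use the evident linearity of the map $L\mapsto \delta L$ and reduce everything to monomials $L(z)=z^{j}$. First I would record the elementary finite geometric identity
$$\delta(z^{j})(z_1,z_2)=\frac{z_1^{j}-z_2^{j}}{z_1-z_2}=h_{j-1}(z_1,z_2),\qquad j\geq 1,$$
together with $\delta(1)(z_1,z_2)=0$. Applied to the nonnegative powers occurring in $L(z)=L_nz^n+\dots+L_{-m}z^{-m}$, this already accounts for the first sum $\sum_{j=1}^n L_j h_{j-1}(z_1,z_2)$.

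Next I would handle the negative powers. Writing $j=-k$ with $k\geq 1$ and clearing denominators,
$$\delta(z^{-k})(z_1,z_2)=\frac{z_1^{-k}-z_2^{-k}}{z_1-z_2}=\frac{1}{(z_1z_2)^{k}}\,\frac{z_2^{k}-z_1^{k}}{z_1-z_2}=-\frac{h_{k-1}(z_1,z_2)}{(z_1z_2)^{k}}.$$
Then I would verify that this is exactly $-h^*_{k-1}(z_1,z_2)$. By definition $h^*_{k-1}(z_1,z_2)=(z_1z_2)^{-1}h_{k-1}(z_1^{-1},z_2^{-1})$; expanding $h_{k-1}(z_1^{-1},z_2^{-1})=\sum_{a=0}^{k-1}z_1^{-(k-1-a)}z_2^{-a}$, multiplying by $(z_1z_2)^{-1}$ and reindexing by $c=k-a$, $d=a+1$ shows $h^*_{k-1}(z_1,z_2)=\sum_{c+d=k+1,\ c,d\geq 1}z_1^{-c}z_2^{-d}$; the same reindexing applied to $(z_1z_2)^{-k}h_{k-1}(z_1,z_2)$ gives the identical sum. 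Summing the monomial contributions over $j$ then yields $\delta L=\sum_{j=1}^n L_j h_{j-1}-\sum_{j=1}^m L_{-j}h^*_{j-1}$, which is the asserted formula.

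Finally, the structural statements follow by inspection of this closed form. Symmetry in $(z_1,z_2)$ is immediate since every $h_{j-1}$ and every $h^*_{j-1}$ is symmetric. Fixing $z_2$ and regarding $\delta L(\cdot,z_2)$ as a Laurent polynomial in $z_1$: the term $h_{j-1}$ has $z_1$-degrees $0,\dots,j-1$, so the top power is $z_1^{n-1}$ with coefficient $L_n\neq 0$; the term $h^*_{j-1}$ has $z_1$-degrees $-1,\dots,-j$, so the lowest power is $z_1^{-m}$ with coefficient $-L_{-m}z_2^{-1}\neq 0$. The analogous statement in $z_2$ is identical by symmetry.

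There is essentially no deep step here; the only point requiring a little care is the index bookkeeping in the identity $(z_1z_2)^{-k}h_{k-1}(z_1,z_2)=h^*_{k-1}(z_1,z_2)$, i.e.\ matching the $k$ monomials of $h_{k-1}$ (exponent pairs summing to $k-1$) with the $k$ monomials of $h^*_{k-1}$ (exponent pairs summing to $-(k+1)$, each entry negative).
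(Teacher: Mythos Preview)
Your proof is correct and follows essentially the same approach as the paper: both split $L$ by linearity into monomials, use the geometric-series identity $(z_1^j-z_2^j)=(z_1-z_2)h_{j-1}(z_1,z_2)$ for positive powers, and the companion identity $(z_1^{-j}-z_2^{-j})=-(z_1-z_2)h^*_{j-1}(z_1,z_2)$ for negative powers. The paper simply asserts the negative-power identity, whereas you derive it via the intermediate step $(z_1z_2)^{-k}h_{k-1}=h^*_{k-1}$ and then read off the degree and symmetry claims explicitly; this is the same argument with a bit more detail.
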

\begin{proof}
For $n\in\{1,2,\dots\}$ we deduce that
$	(z_1)^j-(z_2)^j= (z_1-z_2)h_{j-1}(z_1,z_2)$ and $(z_1)^{-j}-(z_2)^{-j}= -(z_1-z_2)h^*_{j-1}(z_1,z_2)$.
\end{proof}

\begin{pro}[Connection formulas for the Cauchy second kind functions]\label{Geronimus Connection Cauchy}
The following connection formulas are fulfilled
	\begin{align}
 \label{CC12}
	(C_2(z))^\dagger \big(\Omega^{(1)}_2\big)^\dagger - L^{(1)}(\bar z)(\check C^{(1)}_2(z))^\dagger &=-\prodint{
		\delta L^{(1)}(\bar z,z_1),\big(\check\phi^{(1)}_2(z_2)\big)^\top}_{\check u^{(1)}},\\
	\label{CC21}	\Omega^{(2)}_1 C_{1}(z)-\check C_{1}^{(2)}(z)\overline{ L^{(2)}}(z)&=-\prodint{ \check\phi^{(2)}_1(z_1),
		\delta L^{(2)}(\bar z,z_2)}_{ \check u^{(2)} },
	\\\label{CC11}
	\Omega^{(1)}_1 \check C^{(1)}_1(z)&=C_1(z),\\\label{CC22}
		\big(\check C^{(2)}_2(z)\big)^\dagger\big(\Omega_2^{(2)}\big)^\dagger&=\big(C_2(z)\big)^\dagger.
	\end{align}
	\end{pro}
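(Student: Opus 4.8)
The plan is to derive the four connection formulas for the Cauchy second kind functions directly from the Gauss--Borel data, mimicking the strategy used for the Laurent polynomials themselves but now paying attention to the fact that division by $L^{(a)}(z)$ is only a linear operation \emph{modulo} the obstruction measured by $\delta L^{(a)}$. First I would start from the defining relations \eqref{alfre15}, i.e. $C_{1,1}(z)=H(S_2^{-1})^\dagger\chi_1^*(z)$, $C_{1,2}(z)=H(S_2^{-1})^\dagger\chi_2(z)$, and the analogous expressions for $C_2$, and rewrite each of the desired identities as a statement purely about the matrices $S_1,S_2,H$ and their checked counterparts $\check S_1^{(a)},\check S_2^{(a)},\check H^{(a)}$, using the ligatures $\Omega_1^{(1)}\check H^{(1)}=H(\Omega_2^{(1)})^\dagger$ and $\check H^{(2)}(\Omega_2^{(2)})^\dagger=\Omega_1^{(2)}H$ already established, together with the defining expressions of the connectors $\Omega^{(a)}_b$ in terms of $L^{(a)}(\Upsilon)$. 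Relations \eqref{CC11} and \eqref{CC22} are the ``easy'' ones: there is no mass contribution and no $\delta L$ term, so they should fall out of a short chain of substitutions. For instance $\Omega_1^{(1)}\check C_1^{(1)}(z)=S_1L^{(1)}(\Upsilon)(\check S_1^{(1)})^{-1}\cdot\check H^{(1)}(\check S_2^{(1)})^{-\dagger}\chi_2(z)$, and using $L^{(1)}(\Upsilon)\check G^{(1)}=G$ together with both Gauss--Borel factorizations this collapses to $S_1 G(S_2)^{-\dagger}\chi_2(z)=H(S_2)^{-\dagger}\chi_2(z)=C_1(z)$; an entirely parallel computation, transposed and conjugated, gives \eqref{CC22}.

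The substantive formulas are \eqref{CC12} and \eqref{CC21}, and the key algebraic input is the resolvent-type identity $L(\bar z)\dfrac{1}{\bar z-z_i}=\dfrac{L(z_i)}{\bar z-z_i}+\delta L(\bar z,z_i)$, valid because $\delta L(\bar z,z_i)=\dfrac{L(\bar z)-L(z_i)}{\bar z-z_i}$ is, by Proposition \ref{simetricos}, an honest Laurent polynomial in $z_i$ of bounded bidegree. I would apply this with $L=L^{(1)}$ inside the integral representation $(C_2(z))^\dagger=\prodint{\tfrac{1}{\bar z-z_1},(\phi_2(z_2))^\top}_u$. Writing $u=L^{(1)}(z_1)\check u^{(1)}$ (which is exactly the defining relation \eqref{Ger1} of the Geronimus transformation) and inserting the split, the first piece $\prodint{\tfrac{L^{(1)}(z_1)}{\bar z-z_1},(\phi_2(z_2))^\top}_{\check u^{(1)}}$ rebuilds $L^{(1)}(\bar z)$ times a Cauchy function of the perturbed functional, after one uses the connection formula $\Omega_2^{(1)}\phi_2(z)=\check\phi_2^{(1)}(z)$ from Proposition \ref{Geronimus Connection Laurent} to trade $\phi_2$ for $\check\phi_2^{(1)}$ and absorb $\Omega_2^{(1)}$; the second piece is precisely $-\prodint{\delta L^{(1)}(\bar z,z_1),(\check\phi_2^{(1)}(z_2))^\top}_{\check u^{(1)}}$ once $\phi_2$ has likewise been replaced by $\check\phi_2^{(1)}$. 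Rearranging gives \eqref{CC12}. The derivation of \eqref{CC21} is the mirror image: start from $C_1(z)=\prodint{\phi_1(z_1),\tfrac{1}{\bar z-z_2}}_u$, use \eqref{Ger2} in the form $u=\check u^{(2)}\overline{L^{(2)}(z_2)}$, apply the conjugated resolvent identity to $\overline{L^{(2)}(z_2)}/(\bar z-z_2)$, and invoke $\Omega_1^{(2)}\phi_1(z)=\check\phi_1^{(2)}(z)$.

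There are two points that require care rather than routine bookkeeping, and I expect the bidegree accounting to be the main obstacle. First, one must be sure that each manipulation happens in a region of $z$ where the relevant Cauchy kernels converge uniformly --- the formulas are to be read in the intersection of the annuli $z\notin\overline{\operatorname{supp}_i(u)}$ and the corresponding regions for $\check u^{(a)}$, and since the added masses are supported on $\sigma(L^{(a)})$, which is disjoint from $\operatorname{supp}_i u$ by hypothesis, these regions do overlap; I would state this explicitly and push through by analytic continuation afterwards. Second, and more delicate, one must check that $\delta L^{(a)}(\bar z,z_i)$, as a Laurent polynomial in $z_i$ of maximal positive degree $n-1$ and negative degree $-m$, actually pairs with $\check\phi^{(a)}_{2}$ (resp. $\check\phi^{(a)}_{1}$) to give a \emph{finite} vector with the triangular banded structure consistent with the connectors $\Omega^{(a)}_b$ having only $2\underline n+1$ nonzero diagonals --- this is what guarantees that the mass terms hidden in $\check u^{(a)}$ contribute only finitely and that the right-hand sides are genuine (truncatable) semi-infinite vectors. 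This compatibility is exactly Proposition \ref{simetricos} combined with the banded structure already proved for the $\Omega$'s, so the argument closes, but assembling the indices correctly is where the real work lies. Finally, I would remark that the same computation carried out with $L^{(a)}$ replaced by the constant polynomial recovers the trivial identity, a useful sanity check, and that \eqref{CC11}--\eqref{CC22} are the specialization of \eqref{CC12}--\eqref{CC21} to the component where $\delta L$ drops out because the kernel is applied from the ``regular'' side.
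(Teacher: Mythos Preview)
Your approach is correct and essentially coincides with the paper's. For \eqref{CC12} and \eqref{CC21} the paper does exactly what you describe: it uses the Cauchy integral representations of $C_2,\check C_2^{(1)}$ (resp.\ $C_1,\check C_1^{(2)}$), invokes the polynomial connection $\Omega_2^{(1)}\phi_2=\check\phi_2^{(1)}$ (resp.\ $\Omega_1^{(2)}\phi_1=\check\phi_1^{(2)}$), replaces $u$ by $L^{(1)}(z_1)\check u^{(1)}$ (resp.\ $\check u^{(2)}\overline{L^{(2)}(z_2)}$), and subtracts so that the difference is $-\prodint{\delta L^{(1)}(\bar z,z_1),(\check\phi_2^{(1)}(z_2))^\top}_{\check u^{(1)}}$. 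The only difference is order of operations: the paper first absorbs $(\Omega_2^{(1)})^\dagger$ into the second slot and then subtracts, while you split the kernel first via the resolvent identity and absorb $(\Omega_2^{(1)})^\dagger$ afterwards. For \eqref{CC11} and \eqref{CC22} you argue via the Gram representations \eqref{alfre15} and the matrix identity $L^{(1)}(\Upsilon)\check G^{(1)}=G$, whereas the paper stays with the Cauchy integral and uses $\Omega_1^{(1)}\check\phi_1^{(1)}(z_1)=L^{(1)}(z_1)\phi_1(z_1)$ directly; both are one-line arguments. (Your displayed chain has a small slip: $S_1L^{(1)}(\Upsilon)\check G^{(1)}\chi_2=S_1G\chi_2=H(S_2)^{-\dagger}\chi_2$, not $S_1G(S_2)^{-\dagger}\chi_2$.)

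Two remarks. First, you overestimate the difficulty: there is no ``real work'' in the bidegree accounting at the level of this proposition. The identity holds as a formal statement about semi-infinite vectors; the fact that $\delta L^{(a)}(\bar z,\cdot)$ has bounded bidegree is used only \emph{afterwards}, in the Corollary, to conclude that the right-hand side vanishes for $l\ge 2\underline n$. Second, your closing comment that \eqref{CC11}--\eqref{CC22} are specializations of \eqref{CC12}--\eqref{CC21} is not accurate: they concern the \emph{other} family of second kind functions under the \emph{same} Geronimus transformation (i.e.\ $C_1$ with $\Omega_1^{(1)}$ rather than $C_2$ with $\Omega_2^{(1)}$), and the absence of a $\delta L$ term reflects that $L^{(1)}(z_1)$ is absorbed directly into $u$ rather than competing with the Cauchy kernel in the same variable.
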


\begin{proof} See Appendix
\end{proof}

	\begin{coro}
	The following connection formulas
		\begin{align}
		\label{CC12k>}
		(\Omega^{(1)}_2)_{l,l-2\underline{n}} C_{2,l-2\underline{n}}(z)+\dots+	(\Omega^{(1)}_2)_{l,l-1} C_{2,l-1}(z)+C_{2,l}(z)&=\overline{  L^{(1)}}(z)\check C^{(1)}_{2,l}(z),\\
		\label{CC21k>}
			(\Omega^{(2)}_1)_{l,l-2\underline{n}} C_{1,l-2\underline{n}}(z)+\dots+	(\Omega^{(2)}_1)_{l,l-1} C_{1,l-1}(z)+C_{1,l}(z)&=\overline{ L^{(2)}}(z)\check C_{1,l}^{(2)}(z),
				\end{align}
				hold for $l\geq 2\underline{n}$.
	\end{coro}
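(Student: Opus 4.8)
The plan is to derive the componentwise formulas \eqref{CC12k>} and \eqref{CC21k>} directly from the matrix connection formulas \eqref{CC12} and \eqref{CC21} of Proposition \ref{Geronimus Connection Cauchy}, by reading off a single row and showing that the right-hand side inner product, which a priori produces infinitely many extra terms, actually collapses. First I would look at the $l$-th component of \eqref{CC12}. The term $(C_2(z))^\dagger\big(\Omega^{(1)}_2\big)^\dagger$ in its $l$-th slot is $\sum_{k}(\Omega^{(1)}_2)_{l,k}C_{2,k}(z)$; since $\Omega^{(1)}_2$ is lower unitriangular with only $2\underline{n}+1$ nonzero diagonals (Proposition immediately preceding Proposition \ref{Geronimus Connection Laurent}), this sum truncates to exactly $(\Omega^{(1)}_2)_{l,l-2\underline{n}}C_{2,l-2\underline{n}}(z)+\dots+(\Omega^{(1)}_2)_{l,l-1}C_{2,l-1}(z)+C_{2,l}(z)$, which is the left-hand side of \eqref{CC12k>} once we note the diagonal entry is $1$. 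The term $L^{(1)}(\bar z)(\check C^{(1)}_2(z))^\dagger$ contributes $\overline{L^{(1)}}(z)\check C^{(1)}_{2,l}(z)$ in its $l$-th slot (using $L^{(1)}(\bar z)=\overline{\overline{L^{(1)}}(z)}$ appropriately for the conjugated objects, matching the bar placement in the statement).

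The crux is then to show the $l$-th component of the right-hand side inner product $\prodint{\delta L^{(1)}(\bar z,z_1),\big(\check\phi^{(1)}_2(z_2)\big)^\top}_{\check u^{(1)}}$ vanishes for $l\geq 2\underline{n}$. By Proposition \ref{simetricos}, for fixed first variable $\bar z$ the bivariate Laurent polynomial $\delta L^{(1)}(\bar z,z_1)$, as a function of $z_1$, is a Laurent polynomial of positive degree at most $n-1$ and negative degree at most $-m$; hence it lies in $\Lambda_{2\underline{n}-1}=\mathbb C\{\chi^{(k)}(z)\}_{k=0}^{2\underline{n}-2}$ (or a slightly larger but still bounded span — the key point is that its order does not depend on $z$). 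Therefore, by the orthogonality relations of Corollary \ref{ortogonalidad} applied to the sesquilinear form $\prodint{\cdot,\cdot}_{\check u^{(1)}}$ with its biorthogonal family $\check\phi^{(1)}_2$, the pairing of any such bounded-order Laurent polynomial against $\check\phi^{(1)}_{2,l}$ is zero as soon as $l$ is large enough that $\check\phi^{(1)}_{2,l}$ is orthogonal to all of $\Lambda_{l-1}$; the threshold $l\geq 2\underline{n}$ is exactly what is needed to cover the span containing $\delta L^{(1)}(\bar z,\cdot)$. This is the step I expect to be the main obstacle: one must carefully track the CMV ordering so that "positive degree $\leq n-1$, negative degree $\leq -m$" translates into membership in $\Lambda_{2\underline{n}-1}$, and then invoke the correct line of Corollary \ref{ortogonalidad} (even versus odd index $l$) to conclude $\prodint{(z_1)^j,\check\phi^{(1)}_{2,l}(z_2)}_{\check u^{(1)}}=0$ for all the relevant $j$.

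The second formula \eqref{CC21k>} is obtained by the same argument applied to \eqref{CC21}, now reading off the $l$-th component of $\Omega^{(2)}_1 C_1(z)$ — where $\Omega^{(2)}_1$ is lower unitriangular with $2\underline{n}+1$ nonzero diagonals, giving the truncated sum on the left — and of $\check C^{(2)}_1(z)\overline{L^{(2)}}(z)$, while the right-hand side $\prodint{\check\phi^{(2)}_1(z_1),\delta L^{(2)}(\bar z,z_2)}_{\check u^{(2)}}$ vanishes in its $l$-th slot for $l\geq 2\underline{n}$ because $\delta L^{(2)}(\bar z,\cdot)$ is, in the second variable, a Laurent polynomial of bounded order lying in $\overline{\Lambda_{2\underline{n}-1}}$, so the orthogonality relations of Corollary \ref{ortogonalidad} for $\check\phi^{(2)}_1$ kill it. Since both cases are parallel and purely componentwise bookkeeping on top of already-established matrix identities and orthogonality, there is nothing further to do; I would simply write "Take the $l$-th component of \eqref{CC12} and \eqref{CC21}, use that $\Omega^{(1)}_2,\Omega^{(2)}_1$ are banded lower unitriangular, and observe that by Proposition \ref{simetricos} the $\delta L^{(a)}$ terms pair to zero against $\check\phi^{(a)}$ for $l\geq 2\underline{n}$ by Corollary \ref{ortogonalidad}."
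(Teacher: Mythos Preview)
Your proposal is correct and follows essentially the same approach as the paper: take the $l$-th component of \eqref{CC12} and \eqref{CC21}, use the banded lower unitriangular structure of $\Omega^{(1)}_2$ and $\Omega^{(2)}_1$ to obtain the finite sums on the left, and invoke Proposition \ref{simetricos} together with Corollary \ref{ortogonalidad} to see that the $\delta L^{(a)}$ pairings vanish for $l\geq 2\underline{n}$. The paper's own proof records only this last vanishing, but your more detailed bookkeeping (including the CMV-ordering check that degree range $[-m,n-1]$ lands in $\Lambda_{l-1}$ precisely when $l\geq 2\underline{n}$) is exactly the content behind it.
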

\begin{proof}
It is a consequence of the orthogonality relations  in Corollary \ref{ortogonalidad}, because they involve
\begin{align*}
	\prodint{\delta L^{(1)}(\bar z,z_1),\big(\check\phi^{(1)}_{2,l}(z_2)\big)^\top}_{\check u^{(1)}} =	\prodint{ \check\phi^{(2)}_{1,l}(z_1), \delta L^{(2)}(\bar z,z_2)}_{ \check u^{(2)} }
&=0,
		&l&\geq 2\underline{n}.
	\end{align*}
\end{proof}

%\subsection{Connection formulas for the Christoffel--Darboux kernels and their mixed versions}
\begin{defi}
Let's  define
	\begin{align*}
		\Omega^{(1)}_2[\underline{n},l]&:=\begin{bmatrix}
			(\Omega^{(1)}_2)_{l,l-2\underline{n}} & (\Omega^{(1)}_2)_{l,l-2\underline{n}+1}&(\Omega^{(1)}_2)_{l,l-2\underline{n}+2}&\dots& (\bar\Omega^{(1)}_2)_{l,l-1}\\
			0 & (\Omega^{(1)}_2)_{l+1,l-2\underline{n}+1}&(\Omega^{(1)}_2)_{l+1,l-2\underline{n}+2}&\dots& (\Omega^{(1)}_2)_{l+1,l-1}\\
			0 &0 &(\Omega^{(1)}_2)_{l+2,l-2\underline{n}+1}&\dots& (\Omega^{(1)}_2)_{l+2,l-1}\\
			\vdots& &\ddots & &\vdots\\
			0&0&\dots& &(\Omega^{(1)}_2)_{l+2\underline{n}-1,l-1}
		\end{bmatrix},\\
		\Omega^{(2)}_1[\underline{n},l]&:=\begin{bmatrix}
			(\Omega^{(2)}_1)_{l,l-2\underline{n}} & (\Omega^{(2)}_1)_{l,l-2\underline{n}+1}&(\Omega^{(2)}_1)_{l,l-2\underline{n}+2}&\dots& (\Omega^{(2)}_1)_{l,l-1}\\
			0 & (\Omega^{(2)}_1)_{l+1,l-2\underline{n}+1}&(\Omega^{(2)}_1)_{l+1,l-2\underline{n}+2}&\dots& (\Omega^{(2)}_1)_{l+1,l-1}\\
			0 &0 &(\Omega^{(2)}_1)_{l+2,l-2\underline{n}+1}&\dots& (\Omega^{(2)}_1)_{l+2,l-1}\\
			\vdots& &\ddots & &\vdots\\
			0&0&\dots& &(\Omega^{(2)}_1)_{l+2\underline{n}-1,l-1}
		\end{bmatrix},
	\end{align*}
and consider
$\check H^{(1)}[\underline{n},l]:=\diag({\check H}^{(1)}_l,\dots,{\check H}^{(1)}_{l+2\underline{n}-1})$ and $\check H^{(2)}[\underline{n},l]:=\diag({\check H}^{(2)}_l,\dots,{\check H}^{(2)}_{l+2\underline{n}-1})$.
\end{defi}

\begin{pro}[Connection formulas for the Christoffel--Darboux kernels]\label{Geronimus Connection CD}
For $l\geq 2\underline{n}$, the Christoffel--Darboux kernels and their Geronimus transformations
satisfy	\begin{align}\label{GerKerNor1}
&\begin{multlined}[t][0.9\textwidth]
{\check K^{(1),[l]}(\bar z_1,z_2)}-{L^{(1)}(z_2)} {K^{[l]}(\bar z_1,z_2)}=-{\Big[\check \phi^{(1)}_{1,l}(z_2),\dots,
\check \phi^{(1)}_{1,l+2\underline{n}-1}(z_2)\Big]}({\check H}^{(1)}[\underline{n},l])^{-1}\bar \Omega_2^{(1)}[\underline{n},l]
\begin{bmatrix}
\overline{ \phi_{2,l-2\underline{n}}(z_1)}\\ \vdots\\  \overline{
\phi_{2,l-1}(z_1)},
\end{bmatrix},
\end{multlined}
\\\label{GerKerNor2}
&\begin{multlined}[t][0.9\textwidth]
\check K^{(2),[l]}(\bar z_1,z_2)- {\overline{ L^{(2)}}(\bar z_1)} K^{[l]}(\bar z_1,z_2)=-\Big[\overline{\check\phi^{(2)}_{2,l}(z_1)},\dots, \overline{\check\phi^{(2)}_{2,l+2\underline{n}-1}(z_1)}\Big](\check H^{(2)}[\underline{n},l])^{-1}\Omega^{(2)}_1[\underline{n},l]
\begin{bmatrix}
\phi_{1,l-2\underline{n}}(z_2)\\\vdots\\\phi_{1,l-1}(z_2)
\end{bmatrix}.
\end{multlined}
\end{align}
\end{pro}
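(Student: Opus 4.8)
The plan is to derive \eqref{GerKerNor1} and \eqref{GerKerNor2} from the connection formulas of Proposition \ref{Geronimus Connection Laurent} together with the ligatures $\Omega^{(1)}_1 \check H^{(1)} = H(\Omega^{(1)}_2)^\dagger$ and $\check H^{(2)}(\Omega^{(2)}_2)^\dagger = \Omega^{(2)}_1 H$, exactly in the spirit of the proof of Proposition \ref{ConnectionChristoffelCD}. Recall that by the ABC-type representation $K^{[l]}(\bar z_1,z_2) = [\phi_2(z_1)^\dagger]^{[l]}(H^{-1})^{[l]}[\phi_1(z_2)]^{[l]}$, so the kernel is a truncated bilinear pairing of the two biorthogonal vectors. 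First I would write $L^{(1)}(z_2) K^{[l]}(\bar z_1,z_2)$ using the connection formula $\Omega^{(1)}_1 \check\phi^{(1)}_1(z) = L^{(1)}(z)\phi_1(z)$, so that $L^{(1)}(z_2)[\phi_1(z_2)]^{[l]}$ becomes (a truncation of) $\Omega^{(1)}_1 \check\phi^{(1)}_1(z_2)$; since $\Omega^{(1)}_1$ is upper triangular with $2\underline n + 1$ nonzero diagonals, the truncation to the first $l$ rows picks up contributions from components $\check\phi^{(1)}_{1,k}$ with $k < l + 2\underline n$, i.e. it spills over by $2\underline n$ indices beyond $l$. Likewise I would rewrite $[\phi_2(z_1)^\dagger]^{[l]}(H^{-1})^{[l]}$ using $\Omega^{(1)}_2 \phi_2(z) = \check\phi^{(1)}_2(z)$ and the ligature, so that $(H^{-1})^{[l]}[\phi_2^\dagger]^{[l]}$ converts into $(\check H^{(1)})^{-1}$ times $\check\phi^{(1)}_2$ contracted against $(\Omega^{(1)}_2)^\dagger$; since $\Omega^{(1)}_2$ is lower unitriangular banded, this time the indices spill \emph{below} $l$.

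The key algebraic identity is then: the full (untruncated) bilinear form $\sum_k \overline{\check\phi^{(1)}_{2,k}(z_1)}(\check H^{(1)}_k)^{-1}\check\phi^{(1)}_{1,k}(z_2)$ equals $\check K^{(1),[\cdot]}$, and matching the two sides of $\Omega^{(1)}_1\check H^{(1)}(\omega\text{-stuff}) = H(\Omega^{(1)}_2)^\dagger$-type relations one finds that $L^{(1)}(z_2)K^{[l]}(\bar z_1,z_2)$ differs from $\check K^{(1),[l]}(\bar z_1,z_2)$ precisely by the ``corner'' terms created by the banded overspill. Concretely: expanding $\check K^{(1),[l+2\underline n]}$ versus $L^{(1)}(z_2)K^{[l]}$ via the connectors, all terms with indices in $\{0,\dots,l-1\}$ cancel by biorthogonality of the unperturbed family (this is where the orthogonality relations of Corollary \ref{ortogonalidad}, or rather the banded structure, do the work), leaving a finite sum over the boundary band of width $2\underline n$. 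Rearranging that boundary sum into the product of the row vector $[\check\phi^{(1)}_{1,l}(z_2),\dots,\check\phi^{(1)}_{1,l+2\underline n -1}(z_2)]$, the diagonal matrix $(\check H^{(1)}[\underline n, l])^{-1}$, the triangular connector block $\bar\Omega^{(1)}_2[\underline n,l]$, and the column $[\overline{\phi_{2,l-2\underline n}(z_1)},\dots,\overline{\phi_{2,l-1}(z_1)}]^\top$ yields exactly \eqref{GerKerNor1}. The identity \eqref{GerKerNor2} is proved the same way with the roles of the two families interchanged, using $\Omega^{(2)}_2\check\phi^{(2)}_2(z) = L^{(2)}(z)\phi_2(z)$, $\Omega^{(2)}_1\phi_1(z) = \check\phi^{(2)}_1(z)$, the ligature $\check H^{(2)}(\Omega^{(2)}_2)^\dagger = \Omega^{(2)}_1 H$, and noting that $\overline{L^{(2)}}(\bar z_1)$ appears because the perturbation acts by complex conjugation on the second slot.

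The main obstacle I expect is purely bookkeeping: tracking which connector ($\Omega^{(1)}_1$ upper-triangular versus $\Omega^{(1)}_2$ lower-unitriangular) produces overspill above versus below the truncation index $l$, and verifying that the two overspills are compatible so that after cancellation exactly a $2\underline n \times 2\underline n$-type boundary block survives rather than something of the wrong width. One must be careful that the condition $l \geq 2\underline n$ is exactly what guarantees the lower band of $\Omega^{(1)}_2$ does not reach negative indices, so that the column vector $[\overline{\phi_{2,l-2\underline n}},\dots,\overline{\phi_{2,l-1}}]$ is well-defined; this is the analogue of the hypothesis $l \geq 2\underline n$ in Proposition \ref{ConnectionChristoffelCD}. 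A secondary subtlety is the conjugation: since the ligature reads $\Omega^{(1)}_1\check H^{(1)} = H(\Omega^{(1)}_2)^\dagger$, the block $\Omega^{(1)}_2[\underline n,l]$ enters \eqref{GerKerNor1} conjugated (written $\bar\Omega^{(1)}_2[\underline n,l]$ in the statement), and one must make sure the $\dagger$ versus bar versus transpose is handled consistently throughout. Once the index ranges are pinned down, the remaining computation is a routine matrix multiplication, and I would relegate those details to the Appendix as the authors evidently do.
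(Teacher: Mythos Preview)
Your proposal is correct and follows essentially the same route as the paper: form the matrix $(\Omega^{(1)}_2)^\dagger(\check H^{(1)})^{-1}$ (respectively $(\check H^{(2)})^{-1}\Omega^{(2)}_1$), write its action on the left via $\Omega^{(1)}_2\phi_2=\check\phi^{(1)}_2$ and on the right via the ligature combined with $\Omega^{(1)}_1\check\phi^{(1)}_1=L^{(1)}\phi_1$, truncate to $[l]$ and subtract so that the common $[l]\times[l]$ block cancels and only the $[l,\geq l]$ overspill survives. One small remark: the cancellation is purely algebraic (the same truncated term appears in both lines), not a consequence of biorthogonality as you phrase it; otherwise your bookkeeping of the conjugation and the role of $l\geq 2\underline n$ is exactly right.
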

\begin{proof} See Appendix.
\end{proof}

	\begin{pro}[Connection formulas for the mixed Christoffel--Darboux kernels]\label{Geronimus Connection mixed}
	When $l\geq 2\underline{n}$,	the mixed Christoffel--Darboux kernels fulfill the following connection formulas
		\begin{align}
		\label{mix CF12}	&\begin{multlined}[t][0.9\textwidth]
		L^{(1)}(\bar x_1)\check K_{C,\phi}^{(1),[l]}(\bar x_1,x_2)-	L^{(1)}(x_2)K_{C,\phi}^{[l]}(\bar x_1,x_2)-
		\delta L^{(1)}(\bar x_1,x_2)
		\\		=
		-{\Big[\check \phi^{(1)}_{1,l}(x_2),\dots,
			\check \phi^{(1)}_{1,l+2\underline{n}-1}(x_2)\Big]}({\check H}^{(1)}[\underline{n},l])^{-1}
		\bar\Omega^{(1)}_2[\underline{n},l]
		\begin{bmatrix}
		\overline{ C_{2,l-2\underline{n}}(x_1)}\\ \vdots\\  \overline{
			C_{2,l-1}(x_1)},
		\end{bmatrix},
		\end{multlined}	
%	\label{mix CF1}	&\begin{multlined}[t][0.9\textwidth]
%			L^{(1)}(\bar x_1)\check K_{C,\phi}^{(1),[l]}(\bar x_1,x_2)-	L^{(1)}(x_2)K_{C,\phi}^{[l]}(\bar x_1,x_2)-	
%			\prodint{\delta L^{(1)}(\bar x_1,z_1), \overline{\check K^{(1),[l]}(\bar z_2,x_2)}}_{\check u^{(1)}}
%			\\=
%	-{\Big[\check \phi^{(1)}_{1,l}(x_2),\dots,
%			\check \phi^{(1)}_{1,l+2\underline{n}-1}(x_2)\Big]}
%({\check H}^{(1)}_l[\underline{n},l])^{-1}
%		 \bar\Omega^{(1)}_2[\underline{n},l]
%		\begin{bmatrix}
%		\overline{ C_{2,l-2\underline{n}}(x_1)}\\ \vdots\\  \overline{
%			C_{2,l-1}(x_1)}
%		\end{bmatrix},
%		\end{multlined}	
		\end{align}
		\begin{align}
		\label{mix CF2}	
		&\begin{multlined}[t][0.9\textwidth]
		\check K_{C,\phi}^{(2),[l]}(\bar x_1,x_2)-K_{C,\phi}^{[l]}(\bar x_1,x_2)=-
		\Big[\overline{\check C^{(2)}_{2,l}(x_1)},\dots, \overline{\check C^{(2)}_{2,l+2\underline{n}-1}(x_1)}\Big]
		(\check H^{(2)}[\underline{n},l])^{-1}
 \Omega^{(2)}_1[\underline{n},l]
\begin{bmatrix}
		\phi_{1,l-2\underline{n}}(x_2)\\\vdots\\\phi_{1,l-1}(x_2)
		\end{bmatrix},
		\end{multlined}
		\end{align}		
		
		\begin{align}	\label{mix FC1}	
		&	\begin{multlined}[t][0.9\textwidth]
		\check K_{\phi,C}^{(1),[l]}(\bar x_1,x_2)-K_{\phi,C}^{[l]}(\bar x_1,x_2)=
		-{\Big[\check C^{(1)}_{1,l}(x_2),\dots,
			\check C^{(1)}_{1,l+2\underline{n}-1}(x_2)\Big]}
({\check H}^{(1)}[\underline{n},l])^{-1}
				 \bar\Omega^{(1)}_2[\underline{n},l]
		\begin{bmatrix}
		\overline{ \phi_{2,l-2\underline{n}}(x_1)}\\ \vdots\\  \overline{
			\phi_{2,l-1}(x_1)}
		\end{bmatrix},
		\end{multlined}
	\end{align}

	\begin{align}
%	\label{mix FC2}	
%&\begin{multlined}[t][0.9\textwidth]
%\overline{ L^{(2)}}(x_2)\check K_{\phi,C}^{(2),[l]}(\bar x_1,x_2)-
%\overline{ L^{(2)}}(\bar x_1) K_{\phi,C}^{[l]}(\bar x_1,x_2)
%-
%\prodint{ \check K^{(2),[l]}(\bar x_1,z_1),
%\delta L^{(2)}(\bar x_2,z_2))}_{\check u^{(2)}}\\=
%-\Big[\overline{\check \phi^{(2)}_{2,l}(x_1)},\dots, \overline{\check \phi^{(2)}_{2,l+2\underline{n}-1}(x_1)}\Big]
%(\check H^{(2)}[\underline{n},l])^{-1}\Omega^{(2)}_1[\underline{n},l]
%\begin{bmatrix}
%C_{1,l-2\underline{n}}(x_2)\\\vdots\\C_{1,l-1}(x_2)
%\end{bmatrix}.
%\end{multlined}
\label{mix FC22}	
&\begin{multlined}[t][0.9\textwidth]
\overline{ L^{(2)}}(x_2)\check K_{\phi,C}^{(2),[l]}(\bar x_1,x_2)-	\overline{ L^{(2)}}(\bar x_1) K_{\phi,C}^{[l]}(\bar x_1,x_2)
-\delta \overline{ L^{(2)}}(\bar x_1,x_2)
%\prodint{ \check u^{(2)}_{z_1,\bar z_2},\check K^{[l]}(\bar z_1,x_2)\otimes
%	\frac{\bar L(  x_2)	-\bar L((z_2)^{-1})}{ x_2-(z_2)^{-1}}}
\\=
-\Big[\overline{\check \phi^{(2)}_{2,l}(x_1)},\dots, \overline{\check \phi^{(2)}_{2,l+2\underline{n}-1}(x_1)}\Big]
(\check H^{(2)}_{}[\underline{n},l])^{-1}
\Omega^{(2)}_1[\underline{n},l]
\begin{bmatrix}
C_{1,l-2\underline{n}}(x_2)\\\vdots\\C_{1,l-1}(x_2)
\end{bmatrix}.
\end{multlined}
		\end{align}
	\end{pro}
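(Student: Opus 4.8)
The statement to be proved (Proposition \ref{Geronimus Connection mixed}) gives four connection formulas for the mixed Christoffel--Darboux kernels. My plan is to derive each of them from the already-established connection formulas for the ordinary Christoffel--Darboux kernels (Proposition \ref{Geronimus Connection CD}) together with the identification of the mixed kernels as Cauchy transforms / projections of the Cauchy kernels (the Proposition following Definition of $K_{C,\phi}^{[l]}$ and $K_{\phi,C}^{[l]}$). Concretely, for \eqref{mix CF2} and \eqref{mix FC1} I would apply the sesquilinear form $\prodint{\tfrac{1}{\bar x_1-z_1},\,\cdot\,}_u$ (resp. $\prodint{\,\cdot\,,\tfrac{1}{\bar x_2-z_2}}_u$) to both sides of the corresponding kernel connection formula \eqref{GerKerNor2} (resp. \eqref{GerKerNor1}), using that pairing the kernel against $\tfrac1{\bar x-z}$ produces the mixed kernel and that pairing $\phi_{1,k}$ or $\phi_{2,k}$ against $\tfrac1{\bar x-z}$ produces the Cauchy second kind function $C_{1,k}$ or $\overline{C_{2,k}}$. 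For the type-$(2)$ division the perturbing factor sits on the $\bar z_2$ side, so no $\delta L$ correction appears and one gets the clean identities \eqref{mix CF2}, \eqref{mix FC1}; this is the easy half.

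For the type-$(1)$ formulas \eqref{mix CF12} and \eqref{mix FC22} the extra term $\delta L^{(1)}(\bar x_1,x_2)$ (resp. $\delta\overline{L^{(2)}}(\bar x_1,x_2)$) arises exactly as in the proof of Proposition \ref{Geronimus Connection Cauchy}: when one multiplies a kernel relation by $L^{(1)}(z_2)$ and then takes a Cauchy transform in a variable against which $L^{(1)}$ also acts, the identity $L^{(1)}(z_1)-L^{(1)}(z_2)=(z_1-z_2)\,\delta L^{(1)}(z_1,z_2)$ splits off a polynomial piece whose pairing is computable in closed form. So for \eqref{mix CF12} I would start from \eqref{GerKerNor1}, apply $\prodint{\tfrac{1}{\bar x_1-z_1},\,\cdot\,}_{\check u^{(1)}}$ — note: with respect to the \emph{perturbed} functional $\check u^{(1)}$, since that is the functional for which $\check\phi_1^{(1)}$ are (bi)orthogonal and for which $\delta L^{(1)}$ pairings vanish by Corollary \ref{ortogonalidad}/\ref{CDproyeccion} — and use $L^{(1)}(z_1)\check u^{(1)}=u$ to trade $\check u^{(1)}$ for $u$ at the cost of the $\delta L^{(1)}$ term. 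The bookkeeping of which variable is integrated and which is a spectator parameter (as emphasized in the remark on spectral jets) must be done carefully: in $K_{C,\phi}$ the Cauchy transform is in the \emph{first} slot $\bar x_1$, in $K_{\phi,C}$ it is in the \emph{second} slot $x_2$, so \eqref{mix CF12}/\eqref{mix CF2} come from \eqref{GerKerNor1}/\eqref{GerKerNor2} transformed in $z_1$, while \eqref{mix FC1}/\eqref{mix FC22} come from the same two relations transformed in $z_2$.

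An equivalent and perhaps cleaner route, which I would at least use as a cross-check, is to work directly from the definitions of the mixed kernels as finite sums $\sum_{k=0}^{l-1}\overline{C_{2,k}}H_k^{-1}\phi_{1,k}$ etc., substitute the Cauchy-function connection formulas \eqref{CC12}--\eqref{CC22} and the Laurent-polynomial connection formulas of Proposition \ref{Geronimus Connection Laurent}, and collect terms. The truncated sums telescope against the banded structure of the connectors exactly as in the proof of Proposition \ref{Geronimus Connection CD}: the overlap between the $2\underline n+1$-banded connector and the length-$l$ truncation leaves precisely the boundary block $\Omega^{(1)}_2[\underline n,l]$ (resp. $\Omega^{(2)}_1[\underline n,l]$) paired against the tail vector of $\phi$'s or $C$'s. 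Here one must use the ligature $\Omega^{(1)}_1\check H^{(1)}=H(\Omega^{(1)}_2)^\dagger$ to move the diagonal of norms to the correct side, mirroring the corresponding step for the unmixed kernels.

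\textbf{Main obstacle.} The genuine difficulty is not any single estimate but the correct handling of the $\delta L$ remainder terms in \eqref{mix CF12} and \eqref{mix FC22}: one must verify that the polynomial part peeled off by $L^{(1)}(z_1)-L^{(1)}(z_2)=(z_1-z_2)\delta L^{(1)}$ contributes \emph{exactly} $\delta L^{(1)}(\bar x_1,x_2)$ after the Cauchy pairing — i.e. that $\prodint{\tfrac{1}{\bar x_1-z_1},\delta L^{(1)}(z_1,z_2)\,\overline{K^{[l]}(z_2,x_2)}}$-type terms either collapse to $\delta L^{(1)}(\bar x_1,x_2)$ via the reproducing property of $K^{[l]}$ (Corollary \ref{CDproyeccion}, valid because $\delta L^{(1)}(z_1,\cdot)$ has degree within $\Lambda_l$ for $l\ge 2\underline n$) or vanish by the orthogonality relations — and that the domains of validity $x_i\notin\overline{\operatorname{supp}_i u}$ are respected throughout, so that all Cauchy transforms converge and the interchange of the (finite) sum with the continuous sesquilinear form is legitimate. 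The rest is the same banded-telescoping computation already carried out for Proposition \ref{Geronimus Connection CD}, so I would relegate it to the Appendix as indicated.
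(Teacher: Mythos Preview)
Your proposal is essentially correct, and in fact your ``cross-check'' route---expanding the mixed kernels as finite sums, inserting the connection formulas \eqref{CC11}--\eqref{CC22} for the Cauchy functions together with the ligature $\Omega^{(1)}_1\check H^{(1)}=H(\Omega^{(1)}_2)^\dagger$ (equivalently $(\Omega^{(1)}_2)^\dagger(\check H^{(1)})^{-1}\check\phi_1^{(1)}=L^{(1)}H^{-1}\phi_1$), truncating to the $[l]$ block, and subtracting---is exactly how the paper proceeds. The boundary block $\bar\Omega^{(1)}_2[\underline n,l]$ (resp.\ $\Omega^{(2)}_1[\underline n,l]$) appears for the same banded-telescoping reason you give, and the $\delta L$ term is then identified via the reproducing property in Corollary~\ref{CDproyeccion}.

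One small correction concerning your ``main obstacle'': the reproducing identity that collapses the $\delta L^{(1)}$ piece is that of the \emph{perturbed} kernel $\check K^{(1),[l]}$ with respect to $\check u^{(1)}$ (and likewise $\check K^{(2),[l]}$ with $\check u^{(2)}$ for \eqref{mix FC22}), not of the original $K^{[l]}$. Concretely, the term that arises is $\prodint{\delta L^{(1)}(\bar x_1,z_1),\overline{\check K^{(1),[l]}(\bar z_2,x_2)}}_{\check u^{(1)}}$, which equals $\delta L^{(1)}(\bar x_1,x_2)$ because $\delta L^{(1)}(\bar x_1,\cdot)\in\Lambda_{2\underline n-1}\subset\Lambda_{l-1}$. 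Your primary approach (Cauchy-transforming the unmixed kernel formulas \eqref{GerKerNor1}--\eqref{GerKerNor2}) works cleanly for \eqref{mix CF2} and \eqref{mix FC1}, but for \eqref{mix CF12} and \eqref{mix FC22} it entangles $\phi_{2,k}$ with the perturbed functional $\check u^{(1)}$ rather than $u$; disentangling this reproduces the same $\delta L$ computation, so the direct matrix-truncation route is more economical there.
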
	

\begin{proof}
See Appendix.
\end{proof}

\subsection{Christoffel--Geronimus formulas}
The perturbed Cauchy second kind functions read as follows
\begin{align*}
\overline{	\check C_{2,k}^{(1)}(z)}%&=\prodint{\frac{1}{\bar z-z_1}, \check \phi^{(1)}_{2,k}(z_2)}_{ (L^{(1)}(z_1))^{-1}u}+\sum_{i=1}^{d^{(1)}}\sum_{l=0}^{m^{(1)}_{i}-1}\frac{(-1)^l}{l!}\prodint{\delta^{(l)}\big(z_1-\zeta^{(1)}_{i}\big)
%	\otimes \overline{(\xi^{(1)}_{i,l})_{ z_2}},\frac{1}{\bar z-z_1}\otimes  	\overline{\check\phi^{(1)}_{2,k}}}\\
&=\prodint{\frac{1}{\bar z-z_1}, \check \phi^{(1)}_{2,k}(z_2)}_{ (L^{(1)}(z_1))^{-1}u}+\sum_{i=1}^{d^{(1)}}\sum_{l=0}^{m^{(1)}_{i}-1}
\frac{1}{l!}\frac{\d^l}{\d \zeta^l}
\Big(
\frac{1}{ \bar z-\zeta}\Big)
\bigg|_{\zeta= \zeta^{(1)}_{i}}\overline{\prodint{\xi^{(1)}_{i,l},	\check\phi^{(1)}_{2,k}}},& z\in\mathbb C\setminus \overline{\operatorname{supp}_1(u)\cup\sigma(L_1)}\end{align*}
\begin{align*}
\check C_{1,k}^{(2)}(z)%&=\prodint{\check\phi^{(2)}_{1,k}(z_1),
%	\frac{1}{\bar z-z_2}}_{u\overline{(L^{(2)}(z_2))^{-1}}}+\sum_{i=1}^{d^{(2)}}\sum_{l=0}^{m^{(2)}_{i}-1}\frac{(-1)^l}{l!}\prodint{(\xi^{(2)}_{i,l})_{z_1}\otimes \overline{\delta^{(l)}\big(z_2-\zeta^{(2)}_{i}\big)},
%	\check\phi^{(2)}_{1,k}(z_1)\otimes \frac{1}{ z-\bar z_2}}\\
&=\prodint{\check\phi^{(2)}_{1,k}(z_1),
	\frac{1}{\bar z-z_2}}_{u\overline{(L^{(2)}(z_2))^{-1}}}+\sum_{i=1}^{d^{(2)}}\sum_{l=0}^{m^{(2)}_{i}-1}\prodint{\xi^{(2)}_{i,l},	\check\phi^{(2)}_{1,k}}
\frac{1}{l!}\overline{\frac{\d^l}{\d \zeta^l}
\Big(
\frac{1}{ \bar z-\zeta}\Big)
\bigg|_{\zeta= \zeta^{(2)}_{		
		i}}},&z\in\mathbb C\setminus \overline{\operatorname{supp}_2(u)\cup \sigma(L_2)}.
\end{align*}
%with definition domains given by  $\operatorname{dom}\big({\check C_{2,k}^{(1)}}\big)=\mathbb C\setminus \overline{\operatorname{supp}_1(u)\cup\sigma(L_1)}$ and   $\operatorname{dom}(\check C_{1,k}^{(2)})=\mathbb C\setminus \overline{\operatorname{supp}_2(u)\cup \sigma(L_2)}$, respectively.

\begin{defi}
	\begin{enumerate}
		\item  For  $a\in\{1,2\}$ we define
		\begin{align*}
		\prodint{\xi^{(a)}_i,
			L}&:=\Big[\prodint{\xi^{(a)}_{i,1},
			L},\dots,\prodint{\xi^{(a)}_{i,m_i^{(a)}-1},
			L}\Big], &\prodint{\xi^{(a)},
		L}&:=\Big[\prodint{\xi^{(a)}_{1},	L
		},\dots,\prodint{\xi^{(a)}_{d^{(a)}}, L}
	\Big].
	\end{align*}
	\item The expression   $L(z)=L_n z^{-m}\prod\limits_{j=1}^{d}(z-\zeta_j)^{m_j}$ with  $m_1+\dots+m_d=n+m$, allows us to introduce
$	L_{[i]}(z):=
	L_n z^{-m}\prod\limits_{\substack{j=1\\j\neq i}}^{d}(z-\zeta_j)^{m_j}$.
\end{enumerate}
\end{defi}
Given relations \eqref{CC21k>} and  \eqref{CC12k>} let's look at
\begin{align*}
\overline{	\bar L^{(1)}(z)\check C_{2,k}^{(1)}(z)}
&=\prodint{\frac{L^{(1)}(\bar z)}{\bar z-z_1}, \check \phi^{(1)}_{2,k}(z_2)}_{ (L^{(1)}(z_1))^{-1}u}+\sum_{i=1}^{d^{(1)}}
\overline{\prodint{\xi^{(1)}_{i},	\check\phi^{(1)}_{2,k}}}
L^{(1)}_{[i]}(\bar z)\begin{bmatrix}
\big(\bar z- \zeta^{(1)}_{i}\big)^{m^{(1)}_{i}-1}\\ \big(\bar z- \zeta^{(1)}_{i}\big)^{m^{(1)}_{1}-2}\\\vdots\\1
\end{bmatrix},\\
\overline{ L^{(2)}}(z)\check C_{1,k}^{(2)}(z)&=
\prodint{\check\phi^{(2)}_{1,k}(z_1),\frac{L^{(2)}(\bar z)}{\bar z-z_2}}_{u(\overline{L^{(2)}(z_2)})^{-1}}
+\sum_{i=1}^{d^{(2)}}\prodint{\xi^{(2)}_{i},	\check\phi^{(2)}_{1,k}}
\overline{ L^{(2)}_{[i]}}(z)
\begin{bmatrix}
\big(\bar z- \zeta^{(2)}_{i}\big)^{m^{(2)}_{i}-1}\\ \big(\bar z- \zeta^{(2)}_{i}\big)^{m^{(2)}_{1}-2}\\\vdots\\1
\end{bmatrix}.
\end{align*}
We need to evaluate the spectral jets
$\mathcal J^{ \overline{L^{(1)}}}_{ \overline{L^{(1)}}\check C_{2,k}^{(1)}}$ y $\mathcal J^{\overline{ L^{(2)}}}_{\overline{ L^{(2)}}\check C_{1,k}^{(2)}}$.
We have to do it as limits since the perturbing polynomial zeros lay  on the border of the perturbed functional support.

\begin{defi}		For $i\in\{1,2\}$, $j\in\{1,\dots,d^{(i)}\}$, $k\in\{0,\dots,m^{(i)}_{j}-1\}$, let's use the notation
	\begin{align*}
	%	\kappa^{[j]}_k:&=\frac{1}{k!}\dfrac{\d^{k}L^{[j]}(\bar z)}{\d \bar z^{k}}\bigg|_{z=\zeta_j}, & 	
	\ell^{(i)}_{{[j]},k}&:=	\frac{1}{k!}
	\frac{\d^{k} \overline{L^{(i)}_{[j]}}(z)}{\d z^{k}}\bigg|_{z=\bar \zeta^{(i)}_{j}},&
	\mathcal L^{(i)}_{[j]}&:=	\begin{bmatrix}0 &0 & 0& & \ell^{[j]}_{i,0}\\
	\vdots&\vdots &\vdots & \iddots&\vdots\\ 0&0 &\ell^{(i)}_{[j],0}&\dots &\ell^{(i)}_{[j],m^{(i)}_{j}-3}\\	
	0&	\ell^{(i)}_{[j],0} &\ell^{(i)}_{[j],1}& \dots&\ell^{(i)}_{[j],m^{(i)}_{j}-2}\\
	\ell^{(i)}_{[j],0} 	&\ell^{(i)}_{[j],1} &\ell^{(i)}_{[j],2}& \dots&\ell^{(i)}_{[j],m^{(i)}_{j}-1}
	\end{bmatrix}, &\mathcal L^{(i)}&:=\diag(\mathcal L^{(i)}_{[1]},\dots,\mathcal L^{(i)}_{[d^{(i)}]}).
	\end{align*}
\end{defi}

\begin{pro}\label{restos}
The spectral jets satisfy
	\begin{align}
	\label{caleGer}
	\mathcal J^{\overline{ L^{(2)}}}_{\overline{ L^{(2)}}\check C^{(2)}_{{1,k}}}&=\prodint{\xi^{(2)},\check\phi^{(2)}_{1,k}}\mathcal L^{(2)},\\\label{chorro (1)}
	\mathcal J^{ \overline{L^{(1)}}}_{\overline{L^{(1)}}\check C_{2,k}^{(1)}}&=	{\prodint{\xi^{(1)},	\check\phi^{(1)}_{2,k}}\mathcal L^{(1)}}.
	\end{align}
\end{pro}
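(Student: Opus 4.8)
The plan is to compute the two spectral jets directly from the explicit formulas already displayed for $\overline{\bar L^{(1)}(z)\check C_{2,k}^{(1)}(z)}$ and $\overline{L^{(2)}}(z)\check C_{1,k}^{(2)}(z)$, evaluating derivatives at the zeros $\zeta^{(i)}_j$ of the perturbing polynomial. I will treat \eqref{caleGer} in detail; \eqref{chorro (1)} is entirely analogous after complex conjugation and swapping the roles of the variables. First I would observe that in the expression for $\overline{L^{(2)}}(z)\check C_{1,k}^{(2)}(z)$, the first term, $\prodint{\check\phi^{(2)}_{1,k}(z_1),\frac{L^{(2)}(\bar z)}{\bar z-z_2}}_{u(\overline{L^{(2)}(z_2)})^{-1}}$, is, for fixed $z_1$, a Laurent polynomial in $\bar z$ of the form $\delta L^{(2)}(\bar z,z_2)$-type paired against the functional, hence analytic at each $\bar\zeta^{(2)}_j$ but — this is the point — I claim its spectral jet along $\overline{L^{(2)}}$ vanishes. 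Indeed $\frac{L^{(2)}(\bar z)}{\bar z - z_2}=\delta L^{(2)}(\bar z,z_2)+\frac{L^{(2)}(z_2)}{\bar z-z_2}$, and $L^{(2)}(z_2)$ times the Cauchy kernel, paired against $\check u^{(2)}$ which is $(\overline{L^{(2)}})^{-1}u$ plus masses, reproduces (via \eqref{Ger2}) a piece whose jet is controlled; more cleanly, one uses that $\check C^{(2)}_{1,k}$ itself is regular at the zeros only after multiplication by $\overline{L^{(2)}}$, and the product $\overline{L^{(2)}}(z)\check C^{(2)}_{1,k}(z)$ evaluated via the displayed identity has, as its only contribution to the jet at $\bar\zeta^{(2)}_j$, the mass terms, because the integral term is $\overline{L^{(2)}}(z)$ times a function that is \emph{already} finite there (it is the Cauchy transform against $(\overline{L^{(2)}})^{-1}u$, whose singularities are confined to $\operatorname{supp}_2 u$, which is disjoint from $\overline{\sigma(L^{(2)})}$ by hypothesis), so that term contributes a jet proportional to the jet of $\overline{L^{(2)}}$ along itself, which is zero.

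Having reduced to the mass terms, the computation is a Taylor-coefficient bookkeeping. The mass contribution to $\overline{L^{(2)}}(z)\check C_{1,k}^{(2)}(z)$ is
\begin{align*}
\sum_{i=1}^{d^{(2)}}\prodint{\xi^{(2)}_{i},\check\phi^{(2)}_{1,k}}\,\overline{L^{(2)}_{[i]}}(z)
\begin{bmatrix}
(\bar z-\zeta^{(2)}_i)^{m^{(2)}_i-1}\\ (\bar z-\zeta^{(2)}_i)^{m^{(2)}_i-2}\\ \vdots\\ 1
\end{bmatrix},
\end{align*}
where $\prodint{\xi^{(2)}_i,\check\phi^{(2)}_{1,k}}$ is the row vector $\big[\prodint{\xi^{(2)}_{i,1},\check\phi^{(2)}_{1,k}},\dots,\prodint{\xi^{(2)}_{i,m^{(2)}_i-1},\check\phi^{(2)}_{1,k}}\big]$ of the Definition preceding Proposition \ref{restos} (I would double-check the index range against that Definition — it runs from $1$ to $m^{(2)}_i-1$, matching the $m^{(2)}_i$-dimensional jet block once the $\ell$'s are incorporated). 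Now I take the spectral jet along $\overline{L^{(2)}}$, i.e. for each $j$ I record $\frac{1}{p!}\frac{\d^p}{\d z^p}$ at $z=\bar\zeta^{(2)}_j$ for $p=0,\dots,m^{(2)}_j-1$. Only the $i=j$ summand contributes (for $i\neq j$ the factor $\overline{L^{(2)}_{[i]}}(z)$ still carries the zero $(\bar z-\zeta^{(2)}_j)^{m^{(2)}_j}$ since $i\neq j$ means the $j$-th factor is retained in $L^{(2)}_{[i]}$), and Leibniz on $\overline{L^{(2)}_{[j]}}(z)\cdot(\bar z-\zeta^{(2)}_j)^{m^{(2)}_j-1-r}$ produces exactly the lower-triangular-with-anti-diagonal Toeplitz-type pattern of $\ell^{(2)}_{[j],s}=\frac{1}{s!}\frac{\d^s \overline{L^{(2)}_{[j]}}}{\d z^s}\big|_{z=\bar\zeta^{(2)}_j}$ that defines the block $\mathcal L^{(2)}_{[j]}$. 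Assembling over $j$ and recognizing $\diag(\mathcal L^{(2)}_{[1]},\dots,\mathcal L^{(2)}_{[d^{(2)}]})=\mathcal L^{(2)}$ and $\big[\prodint{\xi^{(2)}_1,\check\phi^{(2)}_{1,k}},\dots,\prodint{\xi^{(2)}_{d^{(2)}},\check\phi^{(2)}_{1,k}}\big]=\prodint{\xi^{(2)},\check\phi^{(2)}_{1,k}}$ gives \eqref{caleGer}.

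For \eqref{chorro (1)} I repeat the argument on $\overline{\bar L^{(1)}(z)\check C^{(1)}_{2,k}(z)}$, where the roles of the two tensor slots are interchanged: the integral term is a Cauchy transform against $(L^{(1)})^{-1}u$ whose singular support lies in $\operatorname{supp}_1 u$, disjoint from $\sigma(L^{(1)})$, hence its jet along $\overline{L^{(1)}}$ — taken, per the convention fixed after the spectral-jet definition, in the $z_1$ variable, so at the points $\bar\zeta^{(1)}_j$ — vanishes; the mass term, with $\overline{L^{(1)}_{[i]}}(\bar z)$ and the column $\big[(\bar z-\zeta^{(1)}_i)^{m^{(1)}_i-1},\dots,1\big]^\top$, yields the block $\mathcal L^{(1)}_{[j]}$ by the same Leibniz expansion, and summing over $j$ gives $\prodint{\xi^{(1)},\check\phi^{(1)}_{2,k}}\mathcal L^{(1)}$.

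The main obstacle I anticipate is not the combinatorics of the Leibniz expansion (that is routine once set up) but the clean justification that \emph{the integral term contributes nothing to the jet}. One must argue carefully that $\frac{L^{(i)}(\bar z)}{\bar z - z_s}$ paired against the \emph{divided} functional $(\overline{L^{(i)}})^{-1}u$ (resp. $(L^{(i)})^{-1}u$) produces a function of $\bar z$ that is genuinely analytic in a neighborhood of each zero $\zeta^{(i)}_j$ — this uses the disjointness hypotheses $\sigma(L^{(1)})\cap\operatorname{supp}_1 u=\varnothing$ and $\overline{\sigma(L^{(2)})}\cap\operatorname{supp}_2 u=\varnothing$ from the Definition opening the section, together with the fact that multiplying by $\overline{L^{(i)}}(z)$ (which vanishes to order $m^{(i)}_j$ at $\bar\zeta^{(i)}_j$) kills the first $m^{(i)}_j-1$ Taylor coefficients of the product, so that the only surviving jet entries come from the mass terms whose explicit rational structure is pole-free at the zeros by construction. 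Once this regularity claim is pinned down, the rest is the explicit differentiation already indicated.
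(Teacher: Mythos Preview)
Your proof is correct and follows essentially the same approach as the paper: first show the integral term has vanishing jet (the paper does this by applying Leibniz to the product $\overline{L^{(i)}}(z)\cdot(z-\bar z_s)^{-1}$ inside the functional and using that all derivatives of $\overline{L^{(i)}}$ up to order $m^{(i)}_j-1$ vanish at $\bar\zeta^{(i)}_j$, which is precisely your ``more cleanly'' factorization argument), then compute the mass contribution block by block via Leibniz to recover $\mathcal L^{(i)}_{[j]}$. Your initial $\delta L^{(2)}$ decomposition is an unnecessary detour you rightly abandon, and your index-range worry is well-founded --- the definition of $\prodint{\xi^{(a)}_i,L}$ in the paper should begin at index $0$ rather than $1$ to match the $m^{(a)}_i$-dimensional blocks.
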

\begin{proof} See Appendix.
\end{proof}

\begin{teo}[Christoffel-Geronimus formulas]\label{Christoffel-Geronimus formulas}
Let's assume that  $n=m$ y $l\geq 2n$ and that  Laurent polynomials $L^{(1)}(z)$ and $L^{(2)}(z)$ are prepared. Then, when
$\check\tau^{(2)}_{l}:=
	\begin{vsmallmatrix}
	\mathcal J^{\overline{ L^{(2)}}}_{ C_{1,l-2n}}-\prodint{\xi^{(2)},\phi_{1,l-2n}}\mathcal L^{(2)}\\\vdots \\
	\mathcal J^{\overline{ L^{(2)}}}_{ C_{1,l-1}}-\prodint{\xi^{(2)},\phi_{1,l-1}}\mathcal L^{(2)}
	\end{vsmallmatrix}\neq 0$,
we have the Christoffel-Geronimus formulas
	\begin{align}\label{Ger12}
	\check\phi^{(2)}_{1,l}(z)&=\Theta_*\begin{bmatrix}
	\mathcal J^{\overline{ L^{(2)}}}_{ C_{1,l-2n}}-\prodint{\xi^{(2)},\phi_{1,l-2n}}\mathcal L^{(2)} & \phi_{1,l-2n}(z)\\\vdots&\vdots \\
	\mathcal J^{\overline{ L^{(2)}}}_{ C_{1,l}}-\prodint{\xi^{(2)},\phi_{1,l}}\mathcal L^{(2)}&\phi_{1,l}(z)
	\end{bmatrix}=\frac{1}{\check\tau^{(2)}_{l}}\begin{vmatrix}
	\mathcal J^{\overline{ L^{(2)}}}_{ C_{1,l-2n}}-\prodint{\xi^{(2)},\phi_{1,l-2n}}\mathcal L^{(2)} & \phi_{1,l-2n}(z)\\\vdots&\vdots \\
	\mathcal J^{\overline{ L^{(2)}}}_{ C_{1,l}}-\prodint{\xi^{(2)},\phi_{1,l}}\mathcal L^{(2)}&\phi_{1,l}(z)
	\end{vmatrix},
	\end{align}
	\begin{align}
	\label{GerH2}
	\check H^{(2)}_{l}&=\dfrac{H_{l-2n}}{\overline{ L^{(2)}_{(-1)^ln}}}\Theta_*\begin{bmatrix}
	\mathcal J^{\overline{ L^{(2)}}}_{ C_{1,l-2n}}-\prodint{\xi^{(2)},\phi_{1,l-2n}}\mathcal L^{(2)} & 1\\
		\mathcal J^{\overline{ L^{(2)}}}_{ C_{1,l-2n+1}}-\prodint{\xi^{(2)},\phi_{1,l-2n+1}}\mathcal L^{(2)} & 0\\
	\vdots&\vdots \\
	\mathcal J^{ \overline{L^{(2)}}}_{ C_{1,l}}-\prodint{\xi^{(2)},\phi_{1,l}}\mathcal L^{(2)}&0
	\end{bmatrix}
	=\dfrac{H_{l-2n}}{\overline{ L^{(2)}_{(-1)^ln}}}\frac{\check \tau^{(2)}_{l+1}}{\check \tau^{(2)}_{l}},\end{align}
	\begin{align}
	\label{Ger22}
\begin{multlined}[t][0.9\textwidth]
	\overline{\check \phi^{(2)}_{2,l}(z)}=-
\dfrac{H_{l-2n}}{\overline{ L^{(2)}_{(-1)^ln}}}	\Theta_*\begin{bmatrix}
	\mathcal J^{\overline{L^{(2)}}} _{C_{1,l-2n}}-\prodint{\xi^{(2)},\phi_{1,l-2n}}\mathcal L^{(2)} & 1\\
	\mathcal J^{\overline{L^{(2)}}} _{C_{1,l-2n+1}}-\prodint{\xi^{(2)},\phi_{1,l-2n+1}}\mathcal L^{(2)}&0\\
	\vdots & \vdots\\
	\mathcal J^{\overline{L^{(2)}}} _{C_{1,l-1}}-\prodint{\xi^{(2)},\phi_{1,l-1}}\mathcal L^{(2)} &0\\
	\overline{L^{(2)}( z) }\Big(\mathcal  J^{\overline{L^{(2)}}}_{K_{\phi,C}^{[l]}}(\bar z)-	\prodint{(\xi^{(2)})_x,K^{[l]}(\bar z,x)}\mathcal L^{(2)}
	\Big)+
	\overline{\mathcal  J^{ L^{(2)}}_{\delta  L^{(2)}}( z) }& 0
	\end{bmatrix}\\ =-\dfrac{H_{l-2n}}{\overline{ L^{(2)}_{(-1)^ln}}}\frac{1}{\check \tau^{(2)}_l}	\begin{vmatrix}
	\mathcal J^{{ \overline{L^{(2)}}} } _{C_{1,l-2n+1}}-\prodint{\xi^{(2)},\phi_{1,l-2n+1}}\mathcal L^{(2)}\\
	\vdots \\
	\mathcal J^{ \overline{L^{(2)}}} _{C_{1,l-1}}-\prodint{\xi^{(2)},\phi_{1,l-1}}\mathcal L^{(2)}\\
	\overline{L^{(2)}( z) }\Big(\mathcal  J^{{ \overline{L^{(2)}}} }_{K_{\phi,C}^{[l]}}(\bar z)-	\prodint{(\xi^{(2)})_x,K^{[l]}(\bar z,x)}\mathcal L^{(2)}
	\Big)+
		\overline{\mathcal  J^{ L^{(2)}}_{\delta  L^{(2)}}( z) }
	\end{vmatrix},
\end{multlined}
	\end{align}
	where the spectral jets of the mixed Christoffel--Darboux kernel and of  $\delta \bar L_2$ are taken with respect to the second variable.
Whenever $\check \tau^{(1)}_l:=\begin{vsmallmatrix}
	\mathcal J^{ \overline{L^{(1)}}}_{C_{2,l-2n}}-{\prodint{\xi^{(1)},	\phi_{2,l-2n}}\mathcal L^{(1)}}
\\\vdots\\
\mathcal J^{ \overline{L^{(1)}}}_{C_{2,l-1}}
	-{\prodint{\xi^{(1)},	\phi_{2,l-1}}{\mathcal L^{(1)}}}\end{vsmallmatrix}\neq 0$ y $l\geq 2n$ the following  Christoffel
formulas are satisfied
	\begin{align}\label{Ger21}
	\check \phi_{2,l}^{(1)}(z)&=\Theta_*\begin{bmatrix}
	\mathcal J^{ \overline{L^{(1)}}}_{C_{2,l-2n}}-{\prodint{\xi^{(1)},	\phi_{2,l-2n}}{\mathcal L^{(1)}} }&\phi_{2,l-2n}(z)\\
	\vdots & \vdots \\
	\mathcal J^{ \overline{L^{(1)}}}_{C_{2,l}}-{\prodint{\xi^{(1)},	\phi_{2,l}}{\mathcal L^{(1)}}} &\phi_{2,l}(z)\
	\end{bmatrix}
=\frac{1}{\check \tau^{(1)}_l}\begin{vmatrix}
	\mathcal J^{ \overline{L^{(1)}}}_{C_{2,l-2n}}-{\prodint{\xi^{(1)},	\phi_{2,l-2n}}{\mathcal L^{(1)}} }&\phi_{2,l-2n}(z)\\
	\vdots & \vdots \\
	\mathcal J^{ \overline{L^{(1)}}}_{C_{2,l}}-{\prodint{\xi^{(1)},	\phi_{2,l}}{\mathcal L^{(1)}}} &\phi_{2,l}(z)\
	\end{vmatrix},\end{align}
	\begin{align}
	\label{GerH1}
	\bar{\check H}_{l}^{(1)}&=\dfrac{\bar H_{l-2n}}{\overline{ L^{(1)}_{(-1)^ln}}}\Theta_*\begin{bmatrix}
	\mathcal J^{ \overline{L^{(1)}}}_{C_{2,l-2n}}-{\prodint{\xi^{(1)},	\phi_{2,l-2n}}{\mathcal L^{(1)}}} &1\\
		\mathcal J^{ \overline{L^{(1)}}}_{C_{2,l-2n+1}}-{\prodint{\xi^{(1)},	\phi_{2,l-2n+1}}{\mathcal L^{(1)}}}&0\\
	\vdots & \vdots \\
	\mathcal J^{ \overline{L^{(1)}}}{C_{2,l}}-{\prodint{\xi^{(1)},	\phi_{2,l}}{\mathcal L^{(1)}} }&0
	\end{bmatrix}=\dfrac{\bar H_{l-2n}}{\overline{ L^{(1)}_{(-1)^ln}}}\frac{\check \tau^{(1)}_{l+1}}{\check \tau^{(1)}_l}
,\end{align}
	\begin{multline}
	\label{Ger11}
\overline{	\check \phi^{(1)}_{1,l}(z)}
		=-\dfrac{\bar H_{l-2n}}{\overline{ L^{(1)}_{(-1)^ln}}}
		\Theta_*\begin{bmatrix}
		{ \mathcal J^{ \overline{L^{(1)}}}_{C_{2,l-2n}}-	{\prodint{\xi^{(1)},	{ \phi_{2,l-2n}}}		\mathcal L^{(1)}}} &1\\
		{ \mathcal J^{ \overline{L^{(1)}}}_{C_{2,l-2n+1}}-	{\prodint{\xi^{(1)},	\phi_{2,l-2n+1}}		\mathcal L^{(1)}} }&0\\\vdots & \vdots\\	
{ \mathcal J^{ \overline{L^{(1)}}}_{C_{2,l-1}}-	{\prodint{\xi^{(1)},	\phi_{2,l-1}}		\mathcal L^{(1)}}}&0\\
		\overline{ L^{(1)}(z)}\Big(\mathcal J^{ L_1}_{\bar K_{C,\phi}^{[l]}}(z)-\prodint{{(\xi^{(1)})_x},	\overline{K^{[l]}(\bar x,z)}}
		{\mathcal L^{(1)}}\Big)+	\overline{\mathcal J^{ \overline{L^{(1)}}}_{\delta  L_1}(z)}&0
		\end{bmatrix}\\
=-
\dfrac{\bar H_{l-2n}}{\overline{ L^{(1)}_{(-1)^ln}}}\frac{1}{\check\tau^{(1)}_l}\begin{vmatrix}
		{ \mathcal J^{ \overline{L^{(1)}}}_{C_{2,l-2n+1}}-	{\prodint{\xi^{(1)},	\phi_{2,l-2n+1}}		\mathcal L^{(1)}} }\\\vdots \\	
{ \mathcal J^{ \overline{L^{(1)}}}_{C_{2,l-1}}-	{\prodint{\xi^{(1)},	\phi_{2,l-1}}		\mathcal L^{(1)}}}\\
		\overline{ L^{(1)}(z)}\Big(\mathcal J^{ L_1}_{\bar K_{C,\phi}^{[l]}}(z)-\prodint{{(\xi^{(1)})_x},	\overline{K^{[l]}(\bar x,z)}}
		{\mathcal L^{(1)}}\Big)+	\overline{\mathcal J^{ \overline{L^{(1)}}}_{\delta  L_1}(z)}
		\end{vmatrix}
,
		\end{multline}
		where the spectral jet of the Christoffel--Darboux kernels and of $\delta L_1$ is taken with respect to the first variable.
\end{teo}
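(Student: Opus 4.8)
The plan is to reduce each of the displayed identities to the four connection results of this subsection (Propositions~\ref{Geronimus Connection Laurent}, \ref{Geronimus Connection Cauchy}, \ref{Geronimus Connection CD}, \ref{Geronimus Connection mixed}) together with Proposition~\ref{restos}, following the same pattern used for the Christoffel case in Theorem~\ref{Christoffel Formulas}: evaluate the appropriate connection formula along the spectral jet at the zeros of the perturbing polynomial, where that polynomial vanishes to the prescribed multiplicities; this turns the connection formula into a square linear system for the unknown connector entries (or, when needed, directly for the perturbed Laurent polynomials); and inverting it by Cramer's rule produces precisely the Schur--complement quasideterminant $\Theta_*$.

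I begin with $\check\phi^{(2)}_{1,l}$ and the type-$(2)$ norm. Since $L^{(2)}$ is prepared, $\underline n=n$ and $\overline{L^{(2)}}$ has zeros of total multiplicity $2n$, so $\mathcal J^{\overline{L^{(2)}}}$ takes values in $\mathbb C^{2n}$; by Proposition~\ref{Geronimus Connection Laurent} the connector $\Omega^{(2)}_1$ is lower unitriangular with exactly $2n+1$ nonzero diagonals, whence $\check\phi^{(2)}_{1,l}(z)=\sum_{k=0}^{2n}(\Omega^{(2)}_1)_{l,l-k}\phi_{1,l-k}(z)$ with $(\Omega^{(2)}_1)_{l,l}=1$, and likewise $\prodint{\xi^{(2)},\check\phi^{(2)}_{1,l}}=\sum_{k=0}^{2n}(\Omega^{(2)}_1)_{l,l-k}\prodint{\xi^{(2)},\phi_{1,l-k}}$ by linearity of the mass functionals. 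Taking the spectral jet $\mathcal J^{\overline{L^{(2)}}}$ of the corollary formula \eqref{CC21k>}, the right-hand side $\overline{L^{(2)}}(z)\check C^{(2)}_{1,l}(z)$ is regular at the conjugate zeros of $L^{(2)}$ (the poles of $\check C^{(2)}_{1,l}$ produced by the added masses cancel the zeros of $\overline{L^{(2)}}$), and Proposition~\ref{restos} identifies it with $\prodint{\xi^{(2)},\check\phi^{(2)}_{1,l}}\mathcal L^{(2)}$; substituting the two expansions above collapses \eqref{CC21k>} into the homogeneous system
\begin{align*}
\sum_{k=0}^{2n}(\Omega^{(2)}_1)_{l,l-k}\Big(\mathcal J^{\overline{L^{(2)}}}_{C_{1,l-k}}-\prodint{\xi^{(2)},\phi_{1,l-k}}\mathcal L^{(2)}\Big)=0,\qquad (\Omega^{(2)}_1)_{l,l}=1,
\end{align*}
of $2n$ scalar equations in the $2n$ unknowns $(\Omega^{(2)}_1)_{l,l-1},\dots,(\Omega^{(2)}_1)_{l,l-2n}$. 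When $\check\tau^{(2)}_l\neq 0$ the system has a unique solution; Cramer's rule writes each connector entry as a ratio of determinants, and inserting these into $\check\phi^{(2)}_{1,l}=\sum_k(\Omega^{(2)}_1)_{l,l-k}\phi_{1,l-k}$ followed by a single row replacement yields \eqref{Ger12}. Formula \eqref{GerH2} is then immediate, since $(\Omega^{(2)}_1)_{l,l-2n}=\overline{L^{(2)}_{(-1)^ln}}\,\check H^{(2)}_l/H_{l-2n}$ by \eqref{Omegalambda} and Cramer applied to this particular entry gives $\check\tau^{(2)}_{l+1}/\check\tau^{(2)}_l$.

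For the second family $\check\phi^{(2)}_{2,l}$ I start instead from the mixed-kernel connection formula \eqref{mix FC22}, with its first argument fixed equal to the free variable $z$ and the jet $\mathcal J^{\overline{L^{(2)}}}$ taken in the second variable. There, the term $\overline{L^{(2)}}(x_2)\check K^{(2),[l]}_{\phi,C}$ is again handled by Proposition~\ref{restos} (producing the $\prodint{(\xi^{(2)})_x,K^{[l]}(\bar z,x)}\mathcal L^{(2)}$ contribution once $\prodint{\xi^{(2)},\check\phi^{(2)}_{1,k}}$ has been re-expanded through the Laurent connection and resummed into a kernel), the term $\overline{L^{(2)}}(\bar z)K^{[l]}_{\phi,C}$ is the ordinary jet of a function regular on the conjugate zeros (which avoid $\operatorname{supp}_2 u$ by hypothesis) and contributes $\overline{L^{(2)}(z)}\,\mathcal J^{\overline{L^{(2)}}}_{K^{[l]}_{\phi,C}}(\bar z)$, and $\delta\overline{L^{(2)}}$ contributes $\overline{\mathcal J^{L^{(2)}}_{\delta L^{(2)}}(z)}$; after inserting the connector entries already determined and using the triangularity of $\Omega^{(2)}_1[\underline n,l]$, the resulting square linear system is solved by Cramer's rule into the quasideterminant \eqref{Ger22}, the prefactor $H_{l-2n}/\overline{L^{(2)}_{(-1)^ln}}$ again being $\check H^{(2)}_l$ divided by the lowest connector entry. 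The type-$(1)$ formulas \eqref{Ger21}, \eqref{GerH1}, \eqref{Ger11} follow by the mirror argument, replacing \eqref{CC21k>} by \eqref{CC12k>}, \eqref{mix FC22} by \eqref{mix CF12}, the connections $\check\phi^{(2)}_1=\Omega^{(2)}_1\phi_1$ by $\check\phi^{(1)}_2=\Omega^{(1)}_2\phi_2$ and $\Omega^{(1)}_1\check\phi^{(1)}_1=L^{(1)}\phi_1$, and \eqref{caleGer} by \eqref{chorro (1)}, now taking the spectral jets in the first variable.

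The main obstacle will be the third step: correctly bookkeeping, in the mixed-kernel identities, which variable carries each spectral jet and how the poles of the perturbed second-kind functions at the perturbing zeros are absorbed by Proposition~\ref{restos}, so that the $\delta L$ term and the $\prodint{(\xi)_x,K^{[l]}}\mathcal L$ term land exactly in the last row of the quasideterminant while the remaining rows collapse to the jets $\mathcal J^{\overline{L}}_{C_{\cdot}}$ minus the mass corrections already present in $\check\tau$. The sign and reindexing bookkeeping required to recognize the resulting ratios of determinants as the quasideterminants $\Theta_*$, and the verification that the genericity hypotheses $\check\tau^{(a)}_l\neq 0$ are exactly what makes the relevant coefficient matrices invertible, are routine and proceed exactly as in the proof of Theorem~\ref{Christoffel Formulas}.
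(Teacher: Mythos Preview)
Your plan is correct and matches the paper's proof essentially step for step: jet-evaluate \eqref{CC21k>} (resp.\ \eqref{CC12k>}), invoke Proposition~\ref{restos} to identify the right-hand side, expand via the Laurent connection of Proposition~\ref{Geronimus Connection Laurent} to obtain a $2n\times 2n$ linear system for the connector row, solve by Cramer, and read off \eqref{Ger12}, \eqref{Ger21} and (via \eqref{Omegalambda}) the norms \eqref{GerH2}, \eqref{GerH1}.

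The one point where your sketch is vaguer than the paper is the mixed-kernel step for \eqref{Ger22} and \eqref{Ger11}. After jet-evaluating \eqref{mix FC22} (resp.\ \eqref{mix CF12}) and applying Proposition~\ref{restos}, the left-hand side produces the mass functional acting on the \emph{perturbed} Christoffel--Darboux kernel, $\prodint{(\xi^{(2)})_z,\check K^{(2),[l]}(\bar x_1,z)}\mathcal L^{(2)}$. The paper does not ``re-expand through the Laurent connection and resum'' here; it applies Proposition~\ref{Geronimus Connection CD} (specifically \eqref{GerKerNor2}, resp.\ \eqref{GerKerNor1}) directly to replace this by $\overline{L^{(2)}}(\bar x_1)\prodint{(\xi^{(2)})_z,K^{[l]}(\bar x_1,z)}\mathcal L^{(2)}$ plus a correction term carrying the \emph{same} matrix $\Omega^{(2)}_1[n,l]$ that already sits on the right-hand side. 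The two $\Omega^{(2)}_1[n,l]$ contributions then combine so that the column vector on the right becomes exactly the rows $\mathcal J^{\overline{L^{(2)}}}_{C_{1,\cdot}}-\prodint{\xi^{(2)},\phi_{1,\cdot}}\mathcal L^{(2)}$ appearing in $\check\tau^{(2)}_l$, after which multiplying on the right by the first standard basis vector and using \eqref{Omegalambda} isolates $\overline{\check\phi^{(2)}_{2,l}}$. This is the bookkeeping step you flagged as the main obstacle; once you use Proposition~\ref{Geronimus Connection CD} explicitly rather than an ad hoc resummation, it goes through mechanically.
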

\begin{proof} See Appendix.
\end{proof}

\subsection{Reductions to univariate linear functionals supported on the unit circle}
As we have discussed, for  the two possible Geronimus  transformations we have  the corresponding Christoffel--Geronimus formulas. As for the Christoffel transformation situation we look at the case
$\prodint{L(z),M(z)}_u=\prodint{u_z, L(z)\overline{M(z)}}$
%we have these two possibilities, $u^{(1)}_z=L^{(1)}(z)u_z$ or $ u^{(2)}_z=\overline{L^{(2)}(z)}u_z$. If
with  $\operatorname{supp}u_z\subset \gamma$, where the curve $\gamma=\{z
\in\mathbb C^*:\bar z=\tau(z)
\}$, we have that the perturbation of type 2 can be described as
$
\check u^{(2)}_z=\overline{ L^{(2)}}(\tau(z))u_z$.
For the unit circle $\gamma=\mathbb T$,  with $\tau(z)=z^{-1}$, this Geronimus  transformation can be thought, as
$L^{(2)}_{*}(z)\check u^{(2)}_z=u_z$ and $ L^{(2)}_{*}(z)=\overline{ L^{(2)}}(z^{-1})$,
so that taking $L^{(2)}(z)=L^{(1)}_{*}(z)$ we get that $L^{(1)}(z)\check  u^{(1)}_z=L^{(1)}(z)\check  u^{(2)}_z=u_z$.
Recall that we have the following two mass terms, each coming from one of the two Geronimus transformations we have considered, one generated by $L(z)$ and the other by $L_*(z)=\bar L(z^{-1})$,  having zeros $\sigma(L)=\{\zeta_i\}_{i=1}^d$ and $\sigma(L_*)=\{(\bar\zeta_i)^{-1}\}_{i=1}^d$, respectively,
\begin{align*}
\mathcal M^{(1)}=&\sum_{i=1}^{d}\sum_{k=0}^{m_{i}-1}\frac{(-1)^{k}}{k!}\delta^{(k)}(z_1-\zeta_{i})
\otimes \overline{(\xi^{(1)}_{i,k})_{ z_2}},  &
\mathcal M^{(2)}=&\sum_{j=1}^{d}\sum_{l=0}^{m_{j}-1}\frac{(-1)^{l}}{l!}(\xi^{(2)}_{j,l})_{z_1}\otimes\overline{\delta^{(l)}\big(z_2- (\bar \zeta_{j})^{-1}\big)}.
\end{align*}
If we take
$(\xi^{(1)}_{i,k})_{z_2}=\sum\limits_{j=1}^d\sum\limits_{l=0}^{m_j-1}\frac{(-1)^l}{l!}\bar \Xi_{i,k|j,l}\delta^{(l)}(z_2-(\bar\zeta_{j})^{-1})$ and $
(\xi^{(2)}_{j,l})_{z_1}=\sum\limits_{i=1}^d\sum\limits_{k=0}^{m_i-1}\frac{(-1)^k}{k!}\Xi_{i,k|j,l}\delta^{(k)}(z_1-\zeta_{i})$
we find $\mathcal M^{(1)}=\mathcal M^{(2)}=:\mathcal M$ with
\begin{align}\label{mass_term}
\mathcal M=&\sum_{i,j=1}^{d}\sum_{k=0}^{m_{i}-1}\sum_{l=0}^{m_j-1}\frac{(-1)^{k+l}}{k!l!}\Xi_{i,k|j,l}\delta^{(k)}(z_1-\zeta_{i})\otimes\overline{\delta^{(l)}\big(z_2- (\bar \zeta_{j})^{-1}\big)}.
\end{align}
This mass term is possibly not supported on the diagonal, but it is the most general mass term such that both  Geronimus transformations, of an univariate sesquilinear form,
are equal. The corresponding mass contribution to the  perturbed sesquilinear form is
$\prodint{\mathcal M, M_1(z_1)\otimes \overline{M_2(z_2)}}=\sum\limits_{i,j=1}^{d}\sum\limits_{k=0}^{m_{i}-1}\sum\limits_{l=0}^{m_j-1}\frac{1}{k!l!}\Xi_{i,k|j,l}M_1^{(k)}(\zeta_i)\overline{M_2^{(l)}\big((\bar \zeta_j)^{-1}\big)}$,
so that the perturbed sesquilinear form is
\begin{align*}
\prodint{M_1(z_1),M_2(z_2)}_{\check u}=\prodint{u_z, \frac{M_1(z)\overline{M_2(z)}}{L(z)}}+\sum_{i,j=1}^{d}\sum_{k=0}^{m_{i}-1}\sum_{l=0}^{m_j-1}\Xi_{i,k|j,l}M_1^{(k)}(\zeta_i)\overline{M_2^{(l)}\big((\bar \zeta_j)^{-1}\big)}.
\end{align*}

As we have
\begin{align*}
\prodint{(\xi^{(1)}_{i,k})_{z_2},\phi(z_2)}&=\sum_{j=1}^d\sum_{l=0}^{m_j-1}\frac{1}{l!}\phi^{(l)}((\bar\zeta_{j})^{-1})\bar \Xi_{i,k|j,l},&
\prodint{(\xi^{(2)}_{j,l})_{z_2},\phi(z_2)}&=\sum_{i=1}^d\sum_{k=0}^{m_i-1}\frac{1}{k!}\phi^{(k)}(\zeta_{i})\Xi_{i,k|j,l},
\end{align*}
we  introduce
\begin{align*}
\Xi&:=
\begin{bmatrix}
\Xi_{1,0|1,0}& \cdots & \Xi_{1,0|1,m_1-1}& \dots &\Xi_{1,0|d,0}& \cdots &\Xi_{1,0|d,m_d-1}\\
\vdots & & \vdots & &\vdots & &\vdots\\
 \Xi_{1,m_1-1|1,0}& \cdots& \Xi_{1,m_1-1|1,m_1-1} &\dots &\Xi_{1,m_1-1|d,0}& \cdots &\Xi_{1,m_1-1|d,m_d-1}\\
\vdots & & \vdots & &\vdots & &\vdots\\
\Xi_{d,0|1,0} &\dots &\Xi_{d,0|1,m_1-1}  &\dots &\Xi_{d,0|d,0}& \cdots &\Xi_{d,0|d,m_d-1}\\
\vdots & & \vdots\\
 \Xi_{d,m_d-1|1,0}&\cdots &  \Xi_{d,m_d-1|1,m_1-1}& \dots  & \Xi_{d,m_d-1|d,0}& \cdots & \Xi_{d,m_d-1|d,m_d-1}
\end{bmatrix}\in\mathbb C^{2n\times 2n}
\end{align*}
so that we can we deduce the following expressions in terms of spectral jets
$\prodint{\xi^{(1)},\phi}=\mathcal J ^{L_*}_{\phi}\Xi^\dagger$ and $\prodint{\xi^{(2)},\phi}=\mathcal J ^{L}_{\phi}\Xi$.
We also need of the matrix $\mathcal L_*$ associated to the reciprocal Laurent polynomial $L_*(z)$.

\begin{teo}\label{Christoffel-Geronimus formulas circle general mases}
	Given a sesquilinear from associated with a univariate linear functional supported on the unit circle $ \mathbb T$ and a Geronimus transformation with a prepared
	perturbing Laurent polynomial $L(z)$, for  $l\geq 2n$,
	we have the following expressions (whenever the involved quasideterminants  do exist)
	\begin{align*}%\label{Ger12}
	\check\phi_{1,l}(z)&=\Theta_*\begin{bmatrix}
	\mathcal J^{\bar L_*}_{ C_{1,l-2n}}-\mathcal J^L_{\phi_{1,l-2n}}\Xi\mathcal L_* & \phi_{1,l-2n}(z)\\\vdots&\vdots \\
	\mathcal J^{\bar L_*}_{ C_{1,l}}-\mathcal J^L_{\phi_{1,l-1}}\Xi\mathcal L_*&\phi_{1,l}(z)
	\end{bmatrix}=-{	\Theta_*\begin{bmatrix}
		\overline{	{ \mathcal J^{\bar L}_{C_{2,l-2n}}-	{\mathcal J^{L_*}_{\phi_{2,l-2n}}} \Xi^\dagger	\mathcal L}} & \frac{H_{l-2n}}{ L_{(-1)^ln}}\\
		\overline{	{ \mathcal J^{\bar L}_{C_{2,l-2n+1}}-{\mathcal J^{L_*}_{\phi_{2,l-2n+1}}} \Xi^\dagger	\mathcal L} }&0\\\vdots
		& \vdots\\\overline{	{ \mathcal J^{\bar L}_{C_{2,l-1}}-	{\mathcal J^{L_*}_{\phi_{2,l-1}}} \Xi^\dagger	\mathcal L}}&0\\
		{L(z)\Big(\overline{\mathcal J^{\bar L}_{K_{C,\phi}^{[l]}}(z)}-
			{\mathcal J^{L_*}_{K^{[l]}}
			\Xi^\dagger\mathcal L}\Big)+	\mathcal J^{\bar L}_{\delta L}(z)}&0
		\end{bmatrix}},\\
	%	\label{GerH2}
	\check H_{l}&=\Theta_*\begin{bmatrix}
	\mathcal J^{\bar L_*}_{ C_{1,l-2n}}-\mathcal J^L_{\phi_{1,l-2n}}\Xi\mathcal L_* &  \frac{H_{l-2n}}{\bar L_{(-1)^{l+1}n}}\\
	\mathcal J^{\bar L_*}_{ C_{1,l-2n}}-\mathcal J^L_{\phi_{1,l-2n+1}}\Xi\mathcal L_* & 0\\
	\vdots&\vdots \\
	\mathcal J^{\bar L_*}_{ C_{1,l}}-\mathcal J^L_{\phi_{1,l}}\Xi\mathcal L_*&0
	\end{bmatrix}=\overline{\Theta_*\begin{bmatrix}
		\mathcal J^{\bar L}_{C_{2,l-2n}}-{\mathcal J^{L_*}_{\phi_{2,l-2n}}} \Xi^\dagger{\mathcal L} & \frac{\bar H_{l-2n}}{\bar L_{(-1)^ln}}\\
		\mathcal J^{\bar L}_{C_{2,l-2n+1}}-{\mathcal J^{L_*}_{\phi_{2,l-2n+1}}} \Xi^\dagger {\mathcal L}&0\\
		\vdots & \vdots \\
		\mathcal J^{\bar L}_{C_{2,l}}-{\mathcal J^{L_*}_{\phi_{2,l}}} \Xi^\dagger{\mathcal L} &0
		\end{bmatrix}},\\
	%	\label{Ger22}
	\overline{\check \phi_{2,l}(z)}&=-
	\Theta_*\begin{bmatrix}
	\mathcal J^{\bar L_*} _{C_{1,l-2n}}-\mathcal J^L_{\phi_{1,l-2n}}\Xi\mathcal L_* &  \frac{H_{l-2n}}{ L_{(-1)^{l+1}n}}\\
	\mathcal J^{\bar L_*} _{C_{1,l-2n+1}}-\mathcal J^L_{\phi_{1,l-2n+1}}\Xi\mathcal L_*&0\\
	\vdots & \vdots\\
	\mathcal J^{\bar L_*} _{C_{1,l-1}}-\mathcal J^L_{\phi_{1,l-2n}}\Xi\mathcal L_*&0\\
	\overline{ L_*( z)} \Big(\mathcal  J^{\bar L_*}_{K_{\phi,C}^{[l]}}(\bar z)-	\mathcal J^L_{K^{[l]}}(\bar z)\Xi\mathcal L_*
	\Big)+
	\overline{\mathcal  J^{ L_*}_{\delta  L_*}( z) }& 0
	\end{bmatrix}=\overline{\Theta_*\begin{bmatrix}
		\mathcal J^{\bar L}_{C_{2,l-2n}}-{\mathcal J^{L_*}_{\phi_{2,l-2n}}}\Xi^\dagger{\mathcal L} &\phi_{2,l-2n}(z)\\
		\vdots & \vdots \\
		\mathcal J^{\bar L}_{C_{2,l}}-{\mathcal J^{L_*}_{\phi_{2,l}}} \Xi^\dagger{\mathcal L} &\phi_{2,l}(z)\
		\end{bmatrix}},
	\end{align*}
	in the first line spectral  jets of the Christoffel--Darboux kernels and of $ \delta \bar L $ are taken with respect  to its first variable, while in the third  line   spectral jets of the  Christoffel--Darboux  kernel and $ \delta L $ are taken with respect to the second variable.
\end{teo}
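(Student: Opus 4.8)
The plan is to derive both alternative expressions by specializing Theorem~\ref{Christoffel-Geronimus formulas} to the univariate circle situation and observing that, with the right choice of perturbing polynomials and masses, the two Geronimus transformations of Theorem~\ref{Christoffel-Geronimus formulas} become one and the same transformation, so that its two sets of Christoffel--Geronimus formulas both describe the same perturbed objects.

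First I would take $L^{(1)}(z)=L(z)$ and $L^{(2)}(z)=L_{*}(z)=\bar L(z^{-1})$; both are prepared of the same degree $n$, and the hypotheses $\sigma(L^{(1)})\cap\operatorname{supp}_1u=\varnothing$ and $\overline{\sigma(L^{(2)})}\cap\operatorname{supp}_2u=\varnothing$ are then equivalent, since $\operatorname{supp}u\subset\mathbb T$ while $\sigma(L_{*})=\{(\bar\zeta_i)^{-1}\}_{i=1}^{d}$ and $\overline{\sigma(L_{*})}=\{\zeta_i^{-1}\}_{i=1}^{d}$ avoid $\mathbb T$ exactly when $\sigma(L)$ does. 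On $\mathbb T$ the defining relation \eqref{Ger2} for $\check u^{(2)}$ reads $L(z)\check u^{(2)}_z=u_z$, i.e.\ the very same division equation as \eqref{Ger1}; its solution is determined only up to a mass supported on the relevant zero set, and I would check that the choice of the univariate functionals $(\xi^{(1)}_{i,k})_{z_2}$ and $(\xi^{(2)}_{j,l})_{z_1}$ made in terms of the single matrix $\Xi\in\mathbb C^{2n\times2n}$ just before the theorem turns both $\mathcal M^{(1)}$ and $\mathcal M^{(2)}$ into the single bilinear mass term $\mathcal M$ of \eqref{mass_term}. Consequently $\check u^{(1)}_{z_1,\bar z_2}=\check u^{(2)}_{z_1,\bar z_2}=:\check u_{z_1,\bar z_2}$, hence $\check G^{(1)}=\check G^{(2)}$, both being the Gram matrix of $\check u_{z_1,\bar z_2}$, and by uniqueness of the Gauss--Borel factorization \eqref{eq:quasidefGramGer} we get $\check S^{(1)}_a=\check S^{(2)}_a$, $\check H^{(1)}=\check H^{(2)}$, whence $\check\phi^{(1)}_a=\check\phi^{(2)}_a=:\check\phi_a$ and $\check H^{(1)}_l=\check H^{(2)}_l=:\check H_l$ for $a\in\{1,2\}$.

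Next I would read off formulas \eqref{Ger12}, \eqref{GerH2}, \eqref{Ger22} of Theorem~\ref{Christoffel-Geronimus formulas} for the transformation of type~$2$ with $L^{(2)}=L_{*}$, and formulas \eqref{Ger21}, \eqref{GerH1}, \eqref{Ger11} for the transformation of type~$1$ with $L^{(1)}=L$, identify the perturbed objects via the previous paragraph, and take complex conjugates of the type-$1$ formulas so as to produce $\check\phi_{1,l}$, $\check H_l$ and $\overline{\check\phi_{2,l}}$ rather than $\check\phi^{(1)}_{2,l}$, $\bar{\check H}^{(1)}_l$ and $\overline{\check\phi^{(1)}_{1,l}}$. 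To cast the two sides into the stated common shape it only remains to substitute: (i) the mass pairings, using $\prodint{\xi^{(1)},\phi}=\mathcal J^{L_{*}}_{\phi}\Xi^{\dagger}$, $\prodint{\xi^{(2)},\phi}=\mathcal J^{L}_{\phi}\Xi$ together with $\mathcal L^{(1)}=\mathcal L$, $\mathcal L^{(2)}=\mathcal L_{*}$; and (ii) the leading coefficients, using $(L_{*})_k=\bar L_{-k}$, so that $(L^{(2)})_{(-1)^ln}=(L_{*})_{(-1)^ln}=\bar L_{(-1)^{l+1}n}$, while $\mathcal J^{\overline{L^{(2)}}}=\mathcal J^{\bar L_{*}}$ and $\mathcal J^{\overline{L^{(1)}}}=\mathcal J^{\bar L}$. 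The spectral-jet conventions and the ratios $\check\tau^{(1)}_{l+1}/\check\tau^{(1)}_l$, $\check\tau^{(2)}_{l+1}/\check\tau^{(2)}_l$ are inherited verbatim from Theorem~\ref{Christoffel-Geronimus formulas}, which is exactly what the statement records (jets acting on the first variable in the type-$1$ expression, on the second variable in the type-$2$ one).

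The main obstacle I anticipate is the bookkeeping in the identification step: verifying cleanly that the two a priori different discrete Sobolev masses --- one attached to the zeros $\zeta_i$ of $L$ through $\xi^{(1)}$, the other to the zeros $(\bar\zeta_j)^{-1}$ of $L_{*}$ through $\xi^{(2)}$ --- collapse, under the $\Xi$-parametrization, to the single term $\mathcal M$ of \eqref{mass_term}, and keeping straight, in each of the jet occurrences in \eqref{Ger11}, \eqref{GerH1}, \eqref{Ger21}, which variable is conjugated so that the overall complex conjugation lands on the correct factors. Once $\check u^{(1)}=\check u^{(2)}$ is secured, everything else is a direct substitution into the already-proved Theorem~\ref{Christoffel-Geronimus formulas}.
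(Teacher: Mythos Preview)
Your proposal is correct and follows exactly the approach the paper itself uses: the discussion immediately preceding the theorem sets up the identification $L^{(1)}=L$, $L^{(2)}=L_*$, shows that with the $\Xi$-parametrization of the masses one has $\mathcal M^{(1)}=\mathcal M^{(2)}$ and hence $\check u^{(1)}=\check u^{(2)}$, and records the substitutions $\prodint{\xi^{(1)},\phi}=\mathcal J^{L_*}_{\phi}\Xi^\dagger$, $\prodint{\xi^{(2)},\phi}=\mathcal J^{L}_{\phi}\Xi$, so that the theorem is obtained by plugging these into Theorem~\ref{Christoffel-Geronimus formulas}. The paper gives no separate proof beyond this preparatory text, and your write-up simply makes that argument explicit.
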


\subsubsection{Diagonal masses}
We now discuss what happens when the masses are restricted to be supported on the diagonal.
\begin{defi}	
	We define
	\begin{align*}
	\mathcal B^{[i]}_{k,j}&:=
	(-1)^k\!\!\!\!\!\!\!\!\!\!\!\!\!\!\!\sum_{\substack{j_1+j_2+\dots+j_{k-j+1}=j\\j_1+2j_2+\dots+(k-j+1)j_{k-j+1}=k}}\!\!\!\!\!\!\!\!\!\!
	\frac{k!}{j_1!\cdots j_{k-j+1}!}(\zeta_i)^{-k-j},\\
	\mathcal B^{[i]}&:=\begin{bmatrix}
	1 & 0 & 0 &0 &\dots &0\\
	0 &\mathcal B^{[i]}_{1,1} & 0&0 &\dots &0\\
	0 &\mathcal B^{[i]}_{2,1} & \mathcal B^{[i]}_{2,2} &0 &\dots &0\\
	0 &\mathcal B^{[i]}_{3,1} & \mathcal B^{[i]}_{3,2} &\mathcal B^{[i]}_{3,3}&\dots &0\\
	\vdots& \vdots&\vdots&\vdots&\ddots&\\
	0 &\mathcal B^{[i]}_{m_i-1,1} & \mathcal B^{[i]}_{m_i-1,2} &\mathcal B^{[i]}_{m_1-1,3}&\dots &\mathcal B^{[i]}_{m_i-1,m_i-1}
	\end{bmatrix},
	\end{align*}
	and given complex numbers $\Xi_l^i$ we consider
	\begin{align*}
	\Xi^i&:=\begin{bmatrix}
	\Xi_0^i &\frac{1}{1!1!}\Xi_1^i &\frac{1}{1!2!}\Xi_2^i&\dots &\frac{1}{1!(m_i-1)!}\Xi_{m_i-1}\\
	\frac{1}{1!1!}\Xi_1^ i&\frac{1}{1!1!}\Xi_2^i & & \iddots &0\\
	\frac{1}{2!1!}\Xi_2^i & & \iddots&\iddots &\vdots\\\vdots&\iddots&\iddots&&\vdots\\
	\frac{1}{(m_i-1)!1!}
	\Xi_{m_i-1}&0 & \dots&\dots &0
	\end{bmatrix}.
	\end{align*}
\end{defi}
\begin{pro}\label{Christoffel-Geronimus formulas circle diagonal mases}
For the particular choice $\Xi=\diag(\Xi_1,\dots,\Xi_d)$ with $\Xi_i=\Xi^i\mathcal B^{[i]}$ the mass term \eqref{mass_term} is of the form
\begin{align*}
\prodint{\mathcal M, M_1(z_1)\otimes \overline{M_2(z_2)}}=\prodint{\sum_{i=1}^{d}\sum_{l=0}^{m_i-1}\Xi^i_l\frac{(-1)^l}{l!}\delta^{(l)}(z-\zeta_i),M_1(z)M_{2,*}(z)}.
\end{align*}
\end{pro}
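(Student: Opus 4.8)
The plan is to verify the asserted identity of pairings $\prodint{\mathcal M, M_1(z_1)\otimes\overline{M_2(z_2)}}=\prodint{\sum_{i=1}^{d}\sum_{l=0}^{m_i-1}\Xi^i_l\tfrac{(-1)^l}{l!}\delta^{(l)}(z-\zeta_i),\,M_1(z)M_{2,*}(z)}$ directly, for arbitrary Laurent polynomials $M_1,M_2$. Using the already recorded expansion $\prodint{\mathcal M, M_1(z_1)\otimes\overline{M_2(z_2)}}=\sum_{i,j}\sum_{k,l}\tfrac1{k!\,l!}\Xi_{i,k|j,l}\,M_1^{(k)}(\zeta_i)\,\overline{M_2^{(l)}\big((\bar\zeta_j)^{-1}\big)}$, and noting that $\Xi=\diag(\Xi_1,\dots,\Xi_d)$ forces $\Xi_{i,k|j,l}=0$ for $i\neq j$, the left-hand side collapses to $\sum_{i=1}^d\sum_{k,l=0}^{m_i-1}\tfrac1{k!\,l!}(\Xi_i)_{k,l}\,M_1^{(k)}(\zeta_i)\,\overline{M_2^{(l)}\big((\bar\zeta_i)^{-1}\big)}$. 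Since the right-hand side is, by $\prodint{\tfrac{(-1)^l}{l!}\delta^{(l)}(\cdot-\zeta_i),f}=\tfrac1{l!}f^{(l)}(\zeta_i)$, equal to $\sum_{i=1}^d\sum_{l=0}^{m_i-1}\Xi^i_l\tfrac1{l!}[M_1 M_{2,*}]^{(l)}(\zeta_i)$, the whole statement reduces to the per-zero identity obtained by equating these two sums at a fixed $\zeta_i$.

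The technical heart is recognising $\mathcal B^{[i]}$ as the transfer matrix of the substitution $z\mapsto z^{-1}$ on Taylor jets based at $\zeta_i$. Writing the reciprocal as $M_{2,*}(z)=\overline{M_2}(z^{-1})$, where $\overline{M_2}$ is the Laurent polynomial obtained from $M_2$ by conjugating its coefficients, I would apply the higher-order chain rule (Fa\`{a} di Bruno) with inner map $g(z)=z^{-1}$, whose derivatives are $g^{(p)}(z)=(-1)^p p!\,z^{-p-1}$; substituting these into the partial Bell polynomials $B_{q,j}$ and evaluating at $\zeta_i$ reproduces precisely $\mathcal B^{[i]}_{q,j}$ as defined, with the convention $\mathcal B^{[i]}_{q,0}=\delta_{q,0}$ (empty composition). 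Combined with $\overline{M_2}^{(j)}(\zeta_i^{-1})=\overline{M_2^{(j)}\big((\bar\zeta_i)^{-1}\big)}$, this yields the identity $M_{2,*}^{(q)}(\zeta_i)=\sum_{j}\overline{M_2^{(j)}\big((\bar\zeta_i)^{-1}\big)}\,\mathcal B^{[i]}_{q,j}$, which I would record as a short lemma.

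With the lemma in hand, I would expand $[M_1 M_{2,*}]^{(l)}(\zeta_i)$ by the Leibniz rule, insert the lemma to replace the derivatives of $M_{2,*}$, and reorganise the resulting triple sum so as to read off the coefficient of $M_1^{(k)}(\zeta_i)\,\overline{M_2^{(j)}\big((\bar\zeta_i)^{-1}\big)}$. Working throughout with Taylor coefficients, for which Leibniz takes the clean form $\tfrac1{l!}(fg)^{(l)}=\sum_{k+q=l}\tfrac{f^{(k)}}{k!}\tfrac{g^{(q)}}{q!}$, this coefficient becomes a contraction over an intermediate index of the anti-triangular Hankel-type matrix $\Xi^i$ (whose $(k,q)$ entry is $\Xi^i_{k+q}/(k!\,q!)$ for $k+q\le m_i-1$) against $\mathcal B^{[i]}$, i.e.\ the $(k,j)$ entry of $\Xi^i\mathcal B^{[i]}$; matching with the left-hand coefficient $(\Xi_i)_{k,j}$ is then exactly the hypothesis $\Xi_i=\Xi^i\mathcal B^{[i]}$, which closes the argument.

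The only real obstacle I anticipate is the bookkeeping of factorials and multinomial weights in the last step: one must make the multinomial count defining $\mathcal B^{[i]}$, the factors $1/k!\,l!$ in the mass term, and the factorials built into $\Xi^i$ cancel consistently, and in particular confirm that the $l=0$ mass is carried over unchanged (this is exactly $\mathcal B^{[i]}_{q,0}=\delta_{q,0}$) and that the anti-triangular Hankel shape of $\Xi^i$ is precisely what converts the single functional $\sum_l\Xi^i_l\tfrac{(-1)^l}{l!}\delta^{(l)}$, applied to a product, into the symmetric bilinear expression in the two spectral jets. A practical way to keep this under control is to check the identity by hand for $d=1$ and $m_i\le 3$, which already exhibits all the relevant combinatorics, before writing the general argument.
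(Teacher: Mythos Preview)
Your proposal is correct and follows essentially the same route as the paper's own proof: both use Fa\`a di Bruno for the inner map $z\mapsto z^{-1}$ to identify $\mathcal B^{[i]}$ as the transfer matrix between the Taylor jet of $M_{2,*}$ at $\zeta_i$ and that of $M_2$ at $(\bar\zeta_i)^{-1}$, then expand $[M_1M_{2,*}]^{(l)}(\zeta_i)$ by Leibniz and read off the coefficient matrix as $\Xi^i\mathcal B^{[i]}$. Your anticipation of the factorial bookkeeping as the only delicate point is well placed, and your suggestion to sanity-check the case $m_i\le 3$ is exactly the right way to pin down the normalisations before writing the general computation.
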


\begin{proof}
Using Bell polynomials and the Faà di Bruno formula one can show that
$(M_*)^{(k)}(\zeta_i)=\sum_{j=1}^{k} \bar M^{(j)}((\zeta_i)^{-1}) 	\mathcal B^{[i]}_{k,j}$
with $\mathcal B^{[i]}_{k,0}=\delta_{k,0}$.
Let us consider the expression
\begin{align*}
\prodint{\sum_{i=1}^{d}\sum_{l=0}^{m_i-1}\Xi^i_l\frac{(-1)^l}{l!}\delta^{(l)}(z-\zeta_i),M_1(z)M_{2,*}(z)}&=\sum_{i=1}^{d}\sum_{l=0}^{m_i-1}
\sum_{k=0}^l\Xi^i_l\frac{1}{l!}\binom{l}{k}M^{(l-k)}_1(\zeta_i)(M_{2,*})^{(k)}(\zeta_i)\\
%&=\sum_{i=1}^{d}\sum_{l=0}^{m_i-1}
%\sum_{k=0}^l\sum_{j=0}^{k} \Xi^i_l\frac{1}{(l-k)!k!}\mathcal B^{[i]}_{k,j}M^{(l-k)}_1(\zeta_i) \overline{M^{(j)}( (\bar\zeta_i)^{-1}) }\\
&=\sum_{i=1}^{d}\sum_{n,j=0}^{m_i-1}\sum_{k=j}^{m_i-1}
\Xi^i_{k+n}\frac{1}{n!k!}\mathcal B^{[i]}_{k,j}M^{(n)}_1(\zeta_i) \overline{M^{(j)}( (\bar\zeta_i)^{-1}) }\\
&=\sum_{i=1}^{d}\sum_{n,j=0}^{m_i-1}\Xi_{i,n|i,j}
M^{(n)}_1(\zeta_i) \overline{M^{(j)}( (\bar\zeta_i)^{-1}) },
\end{align*}
where $\Xi_{i,n|i,j}=\sum_{k=j}^{m_i-1} \Xi^i_{k+n}\frac{1}{n!k!}\mathcal B^{[i]}_{k,j}$.
Then, we find  that $\Xi=\diag(\Xi_1,\dots,\Xi_d)$ with $\Xi_i=\Xi^i\mathcal B^{[i]}$.
\end{proof}

Finally,  if the original functional is nonnegative, i. e. biorthogonality is just orthogonality and we consider a Geronimus
 transformation with a prepared Laurent polynomial $L(z)$ which is equal to its reciprocal, $L_*(z)=L(z)$  and $L(z)=|L(z)|$ for $z\in\mathbb T$,    and  masses being positive linear functionals, which is achieved only when  $\Xi_l^i=\xi^i\delta_{l,0}$,  $\xi^i\geq 0$, we get $
 	\Xi^i=\Xi_i=\begin{bsmallmatrix}
 		\xi^i &0&\dots &0\\
 	0&0&  &0\\
 	0&0 &\dots &0
 	\end{bsmallmatrix}\in\mathbb C^{m_i\times m_i}$
and, consequently, we find
$
\Xi \mathcal L_*=\Xi^\dagger\mathcal L=\diag\left(\begin{bsmallmatrix}
0  &\dots& 0&\xi^1 \overline{L_{[1]}( \zeta_1)}\\
0 &\dots &0&0\\
\vdots & & \vdots&\vdots\\
0&\dots&0&0
\end{bsmallmatrix} ,\dots, \begin{bsmallmatrix}
0  &\dots& 0&\xi^d \overline{L_{[d]}( \zeta_d)}\\
0 &\dots &0&0\\
\vdots & & \vdots&\vdots\\
0&\dots&0&0
\end{bsmallmatrix} \right)$,
so that
 \begin{align*}
 \mathcal J^L_{\phi} \Xi\mathcal L_*=\Big[0,\dots,0, \phi(\zeta_1)\xi^1 \overline{L_{[1]}( \zeta_1)},\dots, 0,\dots,0, \phi(\zeta_d)\xi^d\overline{ L_{[d]}( \zeta_d)}\Big].
 \end{align*}
We conclude that
\begin{align*}
	\check\phi_{l}(z)&=\Theta_*\begin{bmatrix}
\mathcal J^{\bar L}_{ C_{l-2n}}-\mathcal J^L_{\phi_{l-2n}}\Xi\mathcal L_* & \phi_{l-2n}(z)\\\vdots&\vdots \\
\mathcal J^{\bar L}_{ C_{l}}-\mathcal J^L_{\phi_{l}}\Xi\mathcal L_*&\phi_{l}(z)
\end{bmatrix}=-{	\Theta_*\begin{bmatrix}
	\overline{	{ \mathcal J^{\bar L}_{C_{l-2n}}-	{\mathcal J^{L}_{\phi_{l-2n}}} \Xi\mathcal L_*}} & \frac{H_{l-2n}}{ L_{(-1)^ln}}\\
	\overline{	{ \mathcal J^{\bar L}_{C_{l-2n+1}}-{\mathcal J^{L}_{\phi_{l-2n+1}}}\Xi\mathcal L_*} }&0\\\vdots
	& \vdots\\\overline{	{ \mathcal J^{\bar L}_{C_{l-1}}-	{\mathcal J^{L}_{\phi_{l-1}}} \Xi	\mathcal L_*}}&0\\
	{L(z)\Big(\overline{\mathcal J^{\bar L}_{K_{C,\phi}^{[l]}}(z)}-
		{\mathcal J^{L}_{K^{[l]}}
		\Xi\mathcal L_*}\Big)+	\mathcal J^{\bar L}_{\delta L}(z)}&0
	\end{bmatrix}},\\
%	\label{GerH2}
\check H_{l}&=\Theta_*\begin{bmatrix}
\mathcal J^{\bar L}_{ C_{l-2n}}-\mathcal J^L_{\phi_{l-2n}}\Xi\mathcal L_* &  \frac{H_{l-2n}}{\bar L_{(-1)^{l+1}n}}\\
\mathcal J^{\bar L}_{ C_{l-2n}}-\mathcal J^L_{\phi_{l-2n+1}}\Xi\mathcal L_* & 0\\
\vdots&\vdots \\
\mathcal J^{\bar L}_{ C_{l}}-\mathcal J^L_{\phi_{l}}\Xi\mathcal L_*&0
\end{bmatrix}=\overline{\Theta_*\begin{bmatrix}
	\mathcal J^{\bar L}_{C_{l-2n}}-{\mathcal J^{L}_{\phi_{l-2n}}} \Xi{\mathcal L_*} & \frac{\bar H_{l-2n}}{\bar L_{(-1)^ln}}\\
	\mathcal J^{\bar L}_{C_{l-2n+1}}-{\mathcal J^{L}_{\phi_{l-2n+1}}} \Xi {\mathcal L_*}&0\\
	\vdots & \vdots \\
	\mathcal J^{\bar L}_{C_{l}}-{\mathcal J^{L}_{\phi_{l}}} \Xi{\mathcal L_*} &0
	\end{bmatrix}}.
\end{align*}
In terms of  $\check \tau_l:=\begin{vsmallmatrix}
\mathcal J^{\bar L}_{ C_{l-2n}}-\mathcal J^L_{\phi_{l-2n}}\Xi\mathcal L_* \\\vdots\\
\mathcal J^{\bar L}_{ C_{l-1}}-\mathcal J^L_{\phi_{l-1}}\Xi\mathcal L_*
\end{vsmallmatrix}$ we can write
\begin{align*}
\check\phi_{l}(z)&=\frac{1}{\check \tau_l}\begin{vmatrix}
\mathcal J^{\bar L}_{ C_{l-2n}}-\mathcal J^L_{\phi_{l-2n}}\Xi\mathcal L_* & \phi_{l-2n}(z)\\\vdots&\vdots \\
\mathcal J^{\bar L}_{ C_{l}}-\mathcal J^L_{\phi_{l}}\Xi\mathcal L_*&\phi_{l}(z)
\end{vmatrix}=-\frac{H_{l-2n}}{ L_{(-1)^ln}}\frac{1}{\bar{\check \tau}_l}\begin{vmatrix}
	\overline{	{ \mathcal J^{\bar L}_{C_{l-2n+1}}-{\mathcal J^{L}_{\phi_{l-2n+1}}}\Xi\mathcal L_*} }\\\vdots
\\\overline{	{ \mathcal J^{\bar L}_{C_{l-1}}-	{\mathcal J^{L}_{\phi_{l-1}}} \Xi	\mathcal L_*}}\\
	{L(z)\Big(\overline{\mathcal J^{\bar L}_{K_{C,\phi}^{[l]}}(z)}-
		{\mathcal J^{L}_{K^{[l]}}
			\Xi\mathcal L_*}\Big)+	\mathcal J^{\bar L}_{\delta L}(z)}
	\end{vmatrix},\\
%	\label{GerH2}
\check H_{l}&=\frac{H_{l-2n}}{\bar L_{(-1)^{l+1}n}}\frac{\check \tau_{l+1}}{\check \tau_l}=\frac{H_{l-2n}}{ L_{(-1)^{l}n}}\frac{\bar{\check \tau}_{l+1}}{\bar{\check \tau}_l}.
\end{align*}

\appendix

\section*{Proofs}
%\subsection{}
%\subsection{Proof of Proposition \ref{ConnectionChristoffelCD}}
\begin{proof}[Proof of Proposition \ref{ConnectionChristoffelCD}]\label{proof:ConnectionChristoffelCD}
We use  the relations
\begin{align*}
\big(\hat{H}^{(1)}\big)^{-1}\omega^{(1)}_1\phi_1(z_2)&=L^{(1)}(z_2)\big(\hat{H}^{(1)}\big)^{-1}\hat\phi^{(1)}_1(z_2),&
\big(\hat{\phi}^{(1)}_2(z_1)\big)^\dagger\big(\hat{H}^{(1)}\big)^{-1}\omega^{(1)}_1&={\phi}{_2^\dagger}(z_1)H^{-1},
\end{align*}
which imply
\begin{gather*}
\begin{multlined}[t][0.9\textwidth]
\Big[\big({\hat\phi_2^{(1)}(z_1)\big)^\dagger}\Big]^{[l]}\left[\big(\hat{H}^{(1)}\big)^{-1}\omega^{(1)}_1\right]^{[l]}[\phi_1(z_2)]^{[l]}+
\Big[\big({\hat\phi_2^{(1)}(z_1)\big)^\dagger}\Big]^{[l]}\left[\big(\hat{H}^{(1)}\big)^{-1}\omega^{(1)}_1\right]^{[l,\geq l]}[\phi_1(z_2)]^{[\geq l]}\\=\Big[\big({\hat\phi_2^{(1)}(z_1)\big)^\dagger}\Big]^{[l]}L^{(1)}(y)\left[\big(\hat{H}^{(1)}\big)^{-1}\right]^{[l]}[\hat{\phi}_1(z_2)]^{[l]},
\end{multlined}\\
\Big[\big({\hat\phi_2^{(1)}(z_1)\big)^\dagger}\Big]^{[l]}\left[\big(\hat{H}^{(1)}\big)^{-1}\omega^{(1)}_1\right]^{[l]}[\phi_1(z_2)]^{[l]}=
\Big[\big({\phi_2(z_1)\big)^\dagger}\Big]^{[l]}\left[H^{-1}\right]^{[l]}[\phi_1(z_2)]^{[l]},
\end{gather*}
 and, after  subtracting and cleaning, you get
\begin{align*}
\Big[\big({\hat\phi_2^{(1)}(z_1)\big)^\dagger}\Big]^{[l]}\left[\big(\hat{H}^{(1)}\big)^{-1}\omega^{(1)}_1\right]^{[l,\geq l]}[\phi_1(z_2)]^{[\geq l]}=
L^{(1)}(z_2)(\hat K^{(1)})^{[l]}(\bar z_1,z_2)-K^{[l]}(\bar z_1,z_2).
\end{align*}

On the other hand, we have $
(\phi_2(z_1))^\dagger(\omega^{(2)}_2)^\dagger\big(\hat{H}^{(2)}\big)^{-1}=
\overline{L^{(2)}(z_1)}\hat\phi^{(2)}_2(z_1)\big(\hat{H}^{(2)}\big)^{-1}$ and $(\omega^{(2)}_2)^\dagger\big(\hat{H}^{(2)}\big)^{-1}\hat \phi^{(2)}_1(z_2)=H^{-1}\phi_1(z_2)$,
and, therefore, we conclude
\begin{gather*}
\begin{multlined}[t][0.9\textwidth]
\left[(\phi_2(z_1))^\dagger\right]^{[l]}\left[(\omega^{(2)}_2)^\dagger\big(\hat{H}^{(2)}\big)^{-1}\right]^{[l]}
\left[\hat \phi^{(2)}_1(z_2)\right]^{[l]}+
\left[(\phi_2(z_1))^\dagger\right]^{[\geq l]}\left[(\omega^{(2)}_2)^\dagger\big(\hat{H}^{(2)}\big)^{-1}\right]^{[\geq l,l]}\left[\hat \phi^{(2)}_1(z_2)\right]^{[l]}\\=
\overline{L^{(2)}(z_1)}\left[\hat\phi^{(2)}_2(x)\right]^{[l]}\left[\big(\hat{H}^{(2)}\big)^{-1}\right]^{[l]}\left[\hat \phi^{(2)}_1(z_2)\right]^{[l]}
\end{multlined},\\
\left[(\phi_2(z_1))^\dagger\right]^{[l]}\left[(\omega^{(2)}_2)^\dagger\big(\hat{H}^{(2)}\big)^{-1}\right]^{[l]}
\left[\hat \phi^{(2)}_1(z_2)\right]^{[l]}
=\left[(\phi_2(z_1))^\dagger\right]^{[l]}
\left[H^{-1}\right]^{[l]}\left[\phi_1(z_2)\right]^{[l]}.
\end{gather*}
Consequently, we arrive to the following expression
\begin{align*}
\left[(\hat \phi^{(2)}_1(z_2))^\dagger\right]^{[l]}
\left[(\bar{\hat{ H}}^{(2)}\big)^{-1}\omega^{(2)}_2\right]^{[l,\geq l]}
\left[\phi_2(z_1)\right]^{[\geq l]}
=
L^{(2)}(z_1)\overline{(\hat K^{(2)})^{[l]}(\bar z_1,z_2)}-\overline{K^{[l]}(\bar z_1,z_2)}.
\end{align*}
\end{proof}
\begin{proof}[Proof of Theorem \ref{Christoffel Formulas}]\label{Theo:Christoffel Formulas}
In the first formula of Proposition \ref{pro:conexión} we take the  $l$-th row and evaluate the spectral jets  along $L(z)$, then
$
\big[(\omega^{(1)}_1)_{l,l},\dots,(\omega^{(1)}_1)_{l,l+2n}\big]
\begin{bsmallmatrix}
		\mathcal J^{L^{(1)}}_{\phi_{1,l}}\\
		\vdots\\
		\mathcal J^{L^{(1)}}_{\phi_{1,l+2n}}
		\end{bsmallmatrix}=0.
$
Since the number of zeros, counting their multiplicities, is $ 2n $, we will have a square matrix
$\begin{bsmallmatrix}
		\mathcal J^{L^{(1)}}_{\phi_{1,l}}\\
		\vdots\\
		\mathcal J^{L^{(1)}}_{\phi_{1,l+2n-1}}
		\end{bsmallmatrix}\in\mathbb C^{2n\times 2n}$ and whenever it is not singular
%$\det\begin{bsmallmatrix}
%		\mathcal J^L_{\phi_{1,k}}\\
%		\vdots\\
%		\mathcal J^L_{\phi_{1,k+2m-1}}
%		\end{bsmallmatrix}\neq 0$,
we obtain
\begin{align*}
\big[(\omega^{(1)}_1)_{l,l},\dots,(\omega^{(1)}_1)_{l,l+2n-1}\big]=-(\omega^{(1)}_1)_{l,l+2n}
\mathcal J^{L^{(1)}}_{\phi_{1,l+2n}}	\begin{bmatrix}\mathcal J^{L^{(1)}}_{\phi_{1,l}}\\
		\vdots\\
		\mathcal J^{L^{(1)}}_{\phi_{1,l+2n-1}}
		\end{bmatrix}^{-1}.
\end{align*}

Using again Proposition  \ref{pro:conexión} we get \eqref{Chris11}. To obtain \eqref{Chris10} we multiply with the vector $\begin{bsmallmatrix}
1\\0\\\vdots\\
0
\end{bsmallmatrix}$.
 Taking \eqref{kernelChristoffelrelacion} and evaluating the spectral jet (on the $z_2$ variable) along  $L^{(1)}(z)$ of the Christoffel--Darboux kernel $K^{[l]}(\bar z_1,z_2)$ we obtain
 \begin{multline*}
 	-\mathcal J_{K^{[l]}}(\bar z)	\begin{bmatrix}
 	\mathcal J^{L^{(1)}}_{\phi_{1,l}} \\
 	\vdots \\
 	\mathcal J^{L^{(1)}}_{\phi_{1,l-1+2n}}\\
 	\end{bmatrix}^{-1}=
 	\overline{\left[\hat\phi_{2,l-2n}^{(1)}(z),\cdots,\hat{\phi}^{(1)}_{2,l-1}(z)\right]}\diag(
 	(\hat{H}^{(1)}_{l-2n})^{-1},\dots, (\hat H^{(1)}_{l-1})^{-1} )\\
 	\times\begin{bmatrix}
 		(\omega^{(1)}_1)_{l-2n,l} &0&0&\dots  & 0 \\
 		(\omega^{(1)}_1)_{l-2n+1,l} &	(\omega^{(1)}_1)_{l-2n+1,l+1} &0&\dots  & 0 \\
 		(\omega^{(1)}_1)_{l-2n+2,l} &(\omega^{(1)}_1)_{l-2n+2,l+1} &(\omega^{(1)}_1)_{l-2n+2,l+2}&\ddots  & 0 \\
 		\vdots & && \ddots & \\
 		(\omega^{(1)}_1)_{l-1,l} &(\omega^{(1)}_1)_{l-1,l+1} &(\omega^{(1)}_1)_{l-1,l+2} &\dots &(\omega^{(1)}_1)_{l-1,l-1+2n}
 	\end{bmatrix},
 \end{multline*}	
and multiplying  by
$\begin{bsmallmatrix}
0\\
\vdots\\
1
\end{bsmallmatrix}$, we get
$
 -\mathcal J^{L^{(1)}}_{K^{[l]}}(\bar z)	\begin{bsmallmatrix}
 \mathcal J^{L^{(1)}}_{\phi_{1,l}} \\
 \vdots \\
 \mathcal J^{L^{(1)}}_{\phi_{1,l-1+2n}}\\
 \end{bsmallmatrix}^{-1}\begin{bsmallmatrix}
 0\\
 \vdots\\
 1
 \end{bsmallmatrix}=
\overline{ \hat{\phi}^{(1)}_{2,l-1}(z)}
(\hat H^{(1)}_{l-1})^{-1}
 (\omega^{(1)}_1)_{l-1,l-1+2n}$.

Analogously, from the last equation in Proposition
\ref{pro:conexión}  we deduce
\begin{align*}
\big[(\omega^{(2)}_2)_{l,l},\dots,(\omega^{(2)}_2)_{l,l+2n-1}\big]=-(\omega^{(2)}_2)_{l,l+2n}
\mathcal J^{L^{(1)}}_{\phi_{1,l+2n}}	\begin{bmatrix}\mathcal J^{L^{(1)}}_{\phi_{1,l}}\\
\vdots\\
\mathcal J ^{L^{(1)}}_{\phi_{1,l+2n-1}}
\end{bmatrix}^{-1}.
\end{align*}
And using the same equation again we conclude \eqref{Chris22}.  Relation \eqref{Chris20}  is obtained just as we got \eqref{Chris10}.
 Finally,  taking \eqref{kernelChristoffelrelacion2} and evaluating the spectral jet (on the $z_1$ variable) along  $L^{(1)}(z)$ of the kernel $\overline{K^l(\bar z_1,z_2)}$, and proceeding as in the proof of \eqref{Chris12} we deduce \eqref{Chris21}.
\end{proof}

\begin{proof}[Proof of Proposition \ref{Geronimus Connection Cauchy}]
The Cauchy second kind functions are
\begin{align*}
\check C_{1}^{(1)}(z)&=\prodint{\check \phi_1^{(1)}(z_1), \frac{1}{\bar z-z_2}}_{\check u^{(1)}},&z&\not\in \overline{\operatorname{supp}_2(\check u^{(1)}),} \\
\check C_{1}^{(2)}(z)&=\prodint{\check \phi_1^{(2)}(z_1), \frac{1}{\bar z-z_2}}_{\check u^{(2)}}
&z&\not\in \overline{\operatorname{supp}_2(\check u^{(2)}),}\\
(	\check C_{2}^{(1)}(z))^\dagger&=\prodint{\frac{1}{\bar z-z_1}, (\check \phi^{(1)}_2(z_2))^\top}_{\check u^{(1)}},& z&\not\in \overline{\operatorname{supp}_1(\check u^{(1)})},\\
(	\check C_{2}^{(2)}(z))^\dagger&=\prodint{\frac{1}{\bar z-z_1}, (\check \phi^{(2)}_2(z_2))^\top}_{\check u^{(2)}},
& z&\not\in \overline{\operatorname{supp}_1(\check u^{(2)})}.
\end{align*}
%donde $\operatorname{supp}(\check u)=\operatorname{supp}( u)\cup \sigma(L)$.
To prove  \eqref{CC12}  we proceed as follows
\begin{align*}
(C_2(z))^\dagger \big(\Omega^{(1)}_2\big)^\dagger - L^{(1)}(\bar z)(\check C^{(1)}_2(z))^\dagger &=
\prodint{\frac{1}{\bar z-z_1},\big(\phi_2(z_2)\big)^\top}_u\big(\Omega^{(1)}_2\big)^\dagger -L^{(1)}(\bar z)
\prodint{\frac{1}{\bar z-z_1},\big(\check \phi^{(1)}_2(z_2)\big)^\top}_{\check u^{(1)}}\\&=
\prodint{\frac{1}{\bar z-z_1},\big(\check\phi^{(1)}_2(z_2)\big)^\top}_{L^{(1)}(z_1)\check u^{(1)}}-
\prodint{\frac{L^{(1)}(\bar z)}{\bar z-z_1},\big(\check \phi^{(1)}_2(z_2)\big)^\top}_{\check u^{(1)}}\\
&=-\prodint{\frac{L^{(1)}(\bar z)-L^{(1)}(z_1)}{\bar z-z_1},\big(\check\phi^{(1)}_2(z_2)\big)^\top}_{\check u^{(1)}}.
\end{align*}
For \eqref{CC21} we have
\begin{align*}
\Omega^{(2)}_1 C_{1}(z)-\check C_{1}^{(2)}(z)\overline{ L^{(2)}}(z)&=\Omega^{(2)}_1\prodint{ \phi_1(z_1), \frac{1}{\bar z-z_2}}_{ u}-\prodint{\check \phi_1^{(2)}(z_1), \frac{1}{\bar z-z_2}}_{\check u^{(2)}}\overline{ L^{(2)}}(z)\\
&=\prodint{ \check\phi^{(2)}_1(z_1), \frac{1}{\bar z-z_2}}_{ \check u^{(2)}\overline{L(z_2)}}
-\prodint{\check \phi_1^{(2)}(z_1),
	\frac{	L^{(2)}( \bar z)}{\bar z-z_2}}_{\check u^{(2)}}\\
&=-\prodint{ \check\phi^{(2)}_1(z_1), \frac{L^{(2)}( \bar z)-L^{(2)}(z_2)}{\bar z-z_2}}_{ \check u^{(2)}}.
\end{align*}
Relation  \eqref{CC11} is a consequence of
\begin{align*}
\Omega^{(1)}_1 \check C^{(1)}_1(z)&=\prodint{L^{(1)}(z_1)\phi_1(z),\frac{1}{\bar z-z_2}}_{\check u^{(1)}}\\
&=\prodint{\phi_1(z),\frac{1}{\bar z-z_2}}_{L^{(1)}(z_1)\check u^{(1)}}
\\
&=C_1(z),
\end{align*}
and, finally, to obtain \eqref{CC22} just observe that
\begin{align*}
\big(\check C^{(2)}_2(z)\big)^\dagger\big(\Omega_2^{(2)}\big)^\dagger&=\prodint{\frac{1}{\bar z-z_1}, L^{(2)}(z)\big(\phi_2(z)\big)^\top}_{\check u^{(2)}}\\=\big(C_2(z)\big)^\dagger.
\end{align*}
\end{proof}
\begin{proof}[Proof of Proposition \ref{Geronimus Connection CD}]
From
			\begin{align*}
(\phi_2(z_1))^\dagger(\Omega^{(1)}_2 )^\dagger\big(\check H^{(1)}\big)^{-1} &=(\check\phi^{(1)}_2(z_1))^\dagger\big(\check H^{(1)}\big)^{-1} ,&
	\big(\Omega^{(1)}_2\big)^\dagger\big(\check H^{(1)}\big)^{-1} \check\phi_1^{(1)}(z_2) = L^{(1)}(z_2)H^{-1}\phi_1(z_2),
			\end{align*}
we deduce that
\begin{align*}
&\left[(\phi_2(z_1)	 )^\dagger\right]^{[l]}\left[(\Omega^{(1)}_2 )^\dagger\big(\check H^{(1)}\big)^{-1} \right]^{[l]}\left[\check \phi^{(1)}_1(z_2)\right]^{[l]}=\left[(\check\phi^{(1)}_2(z_1))^\dagger\right]^{[l]}\left[\big(\check H^{(1)}\big)^{-1}\right]^{[l]}\left[\check \phi^{(1)}_1(z_2)\right]^{[l]},\\
&\begin{multlined}[t][0.9\textwidth]
\left[(\phi_2(z_1))^\dagger\right]^{[l]}\left[	\big(\Omega^{(1)}_2\big)^\dagger\big(\check H^{(1)}\big)^{-1}\right] ^{[l]}\left[\check\phi_1^{(1)}(z_2) \right]^{[l]}+\left[(\phi_2(z_1))^\dagger\right]^{[l]}
\left[	\big(\Omega^{(1)}_2\big)^\dagger\big(\check H^{(1)}\big)^{-1}\right] ^{[l,\geq l]}\left[\check\phi_1^{(1)}(z_2) \right]^{[\geq l]}\\= L^{(1)}(z_2)\left[(\phi_2(z_1))^\dagger\right]^{[l]}\left[H^{-1}\right]^{[l]}\left[\phi_1(z_2) \right]^{[l]},
\end{multlined}
\end{align*}
that once subtracted give
\begin{align*}
\left[(\check\phi_1^{(1)}(z_2) )^\top\right]^{[\geq l]}
\left[\big({\check H}^{(1)}\big)^{-1}	\bar \Omega^{(1)}_2\right] ^{[\geq l,l]}
\left[ \overline{\phi_2(z_1)}\right]^{[l]}
= {L^{(1)}(z_2)K^{[l]}(\bar z_1,z_2)}-{\check K^{(1),[l]}(\bar z_1,z_2)},
\end{align*}
from where we get \eqref{GerKerNor1}.
On the other hand, we have
	\begin{align*}
(	\check H^{(2)} )^{-1}	\Omega_1^{(2)} \phi_1(z_2)&=(	\check H^{(2)} )^{-1}\check\phi^{(2)}_1(z_2), & (\check\phi^{(2)}_2(z_1))^\dagger(	\check H^{(2)} )^{-1}\Omega^{(2)}_1 &=\overline{L^{(2)}(z_1)} (\phi_2(z_1))^\dagger H^{-1},
	\end{align*}
so that
\begin{align*}
&\left[(\check{\phi}_2^{(2)}(z_1))^\dagger\right]^{[l]}\left[(	\check H^{(2)} )^{-1}	\Omega_1^{(2)} \right]^{[l]}\left[\phi_1(z)\right]^{[l]}=
\left[(\check{\phi}_2^{(2)}(z_1))^\dagger\right]^{[l]}\left[(	\check H^{(2)} )^{-1}\right]^{[l]}\left[\check\phi^{(2)}_1(z_2)\right]^{[l]},\\&\begin{multlined}[t][0.9\textwidth]
\left[(\check\phi^{(2)}_2(z_1))^\dagger\right]^{[l]}\left[(	\check H^{(2)} )^{-1}\Omega^{(2)}_1 \right]^{[l]}\left[\phi_1(z_2)\right]^{[l]}+\left[(\check\phi^{(2)}_2(z_1))^\dagger\right]^{[\geq l]}\left[(	\check H^{(2)} )^{-1}\Omega^{(2)}_1 \right]^{[\geq l,l]}\left[\phi_1(z_2)\right]^{[l]}\\ =\overline{L^{(2)}(z_1)} \left[(\phi_2(z_1))^\dagger\right] ^{[l]}\left[H^{-1}\right]^{[l]}\left[\phi_1(z_2)\right]^{[l]},
\end{multlined}
\end{align*}
which lead to
\begin{align*}
\left[(\check\phi^{(2)}_2(z_1))^\dagger\right]^{[\geq l]}\left[(	\check H^{(2)} )^{-1}\Omega^{(2)}_1 \right]^{[\geq l,l]}\left[\phi_1(z_2)\right]^{[l]}=\overline{L^{(2)}(z_1)} K^{[l]}(\bar z_1,z_2)-\check K^{(2),[k]}(\bar z_1,z_2),
\end{align*}
and \eqref{GerKerNor2} is immediately deduced.
\end{proof}

\begin{proof}[Proof of Proposition \ref{Geronimus Connection mixed}]
Let's prove \eqref{mix CF12}. 	From the relations
	\begin{align*}
	(C_2(x_1))^\dagger \big(\Omega^{(1)}_2\big)^\dagger \big(\check H^{(1)}\big)^{-1}- L^{(1)}(\bar z_1)(\check C^{(1)}_2(x_1))^\dagger \big(\check H^{(1)}\big)^{-1}&=-\prodint{\delta L^{(1)}(\bar x_1,z_1),\big(\check\phi^{(1)}_2( z_2)\big)^\top}_{\check u^{(1)}}\big(\check H^{(1)}\big)^{-1},
	\end{align*}
		\begin{align*}
	\big(\Omega^{(1)}_2\big)^\dagger(\check H^{(1)})^{-1} \check\phi_1^{(1)}(x_2) &= L^{(1)}(x_2)H^{-1}\phi_1(x_2),
		\end{align*}
we obtain
\begin{align*}
&\begin{multlined}[t][0.9\textwidth]
\left[	(C_2(x_1))^\dagger \right]^{[l]}\left[\big(\Omega^{(1)}_2\big)^\dagger\big(\check H^{(1)}\big)^{-1} \right]^{[l]}	[\check \phi^{(1)}_{1}(x_2)]^{[l]}
-L^{(1)}(\bar z_1)\left[(\check C^{(1)}_2(x_1))^\dagger\right]^{[l]}\left[\big(\check H^{(1)}\big)^{-1}\right]^{[l]}	[\check \phi^{(1)}_{1}(x_2)]^{[l]}\\=
-\prodint{\delta L^{(1)}(\bar x_1,z_1),\big(\left[\check\phi^{(1)}_2( z_2)\right]^{[l]}\big)^\top}_{\check u^{(1)}}\left[\big(\check H^{(1)}\big)^{-1}\right]^{[l]}	[\check \phi^{(1)}_{1}(x_2)]^{[l]},
\end{multlined}\\
&\begin{multlined}[t][0.9\textwidth]
[C_{2}(x_1)^{\dagger}]^{[l]}\left[	\big(\Omega^{(1)}_2\big)^\dagger(\check H^{(1)})^{-1} \right]^{[l]}\left[\check\phi_1^{(1)}(x_2) \right]^{[l]}+[C_{2}(x_1)^{\dagger}]^{[l]}
\left[	\big(\Omega^{(1)}_2\big)^\dagger(\check H^{(1)})^{-1} \right]^{[l,\geq l]}\left[\check\phi_1^{(1)}(x_2)\right] ^{[\geq l]}\\
= L^{(1)}(x_2)[C_{2}(x_1)^{\dagger}]^{[l]}\left[H^{-1}\right]^{[l]}\left[\phi_1(x_2)\right]^{[l]}.
\end{multlined}
\end{align*}
When subtracted we get
	\begin{align*}
	\begin{multlined}[t][0.9\textwidth]
	\left[(\check\phi_1^{(1)}(x_2))^\top\right] ^{[\geq l]}
	\left[	({\check H}^{(1)})^{-1}\big(\bar \Omega^{(1)}_2\big) \right]^{[\geq l,l]}[\overline{C_{2}(x_1)}]^{[l]}+	L^{(1)}(\bar x_1)\check K_{C,\phi}^{(1),[l]}(\bar x_1,x_2)\\=
\prodint{\delta L^{(1)}(\bar x_1,z_1), \overline{\check K^{(1),[l]}(\bar z_2,x_2)}}_{\check u^{(1)}}
%	\prodint{\check u_{z_1,\bar z_2},\frac{L(\bar x_1)-L(z_1)}{\bar x_1- z_1}\otimes \check K^{(1),[l]}(z_2^{-1},x_2)}
	+ L^{(1)}(x_2)K_{C,\phi}^{[l]}(\bar x_1,x_2).
	\end{multlined}
	\end{align*}
	From Corollary \ref{CDproyeccion} and  Proposition \ref{simetricos} we obtain
	\begin{align*}
	\prodint{\frac{L^{(1)}(\bar x_1)-L^{(1)}(z_1)}{\bar x_1- z_1}, \overline{\check K^{(1),[l]}(\bar z_2,x_2)}}_{\check u^{(1)}}&=\frac{L^{(1)}(\bar x_1)-L^{(1)}(x_2)}{\bar x_1- x_2},
%	\\
%	\prodint{ \check K^{(2),[l]}(\bar x_1,z_1), \frac{L^{(2)}(  \bar x_2)- L^{(2)}(z_2)}{ \bar x_2-z_2}}_{\check u^{(2)}}&=\frac{\overline{ L^{(2)}}(\bar x_1)-\overline{ L^{(2)}}(   x_2)}{  \bar x_1-x_2}.
	\end{align*}
	and the desired result follows.
	%and introduce it into \eqref{mix CF12}.% and \eqref{mix FC2}.

For the proof of \eqref{mix CF2}, we notice that from
\begin{align*}
	\big(\check C^{(2)}_2(x_1)\big)^\dagger (\check H^{(2)} )^{-1}\Omega^{(2)}_1 &=\big(C_2(x_1)\big)^\dagger H^{-1},&
	(\check H^{(2)})^{-1}\Omega_1^{(2)} \phi_1(x_2)&=(\check H^{(2)})^{-1}\check\phi^{(2)}_1(x_2),
\end{align*}
we get
\begin{align*}
&\begin{multlined}[t][0.9\textwidth]\left[	\big(\check C^{(2)}_2(x_1)\big)^\dagger\right] ^{[l]}\left[(\check H^{(2)} )^{-1}\Omega^{(2)}_1\right]^{[l]}\left[\phi_1(x_2)\right]^{[l]}+
\left[	\big(\check C^{(2)}_2(x_1)\big)^\dagger\right] ^{[\geq l]}	\left[(\check H^{(2)})^{-1}\Omega_1^{(2)}\right]^{[\geq l,l]} \left[\phi_1(x_2)\right]^{[ l]}\\=\left[\big(C_2(x_1)\big)^\dagger\right]^{[l]} \left[H^{-1}\right]^{[l]}\left[\phi_1(x_2)\right]^{[l]},
\end{multlined}\\
&
\left[	\big(\check C^{(2)}_2(x_1)\big)^\dagger\right] ^{[l]}\left[	(\check H^{(2)})^{-1}\Omega_1^{(2)}\right]^{[l]} \left[\phi_1(x_2)\right]^{[l]}
=\left[	\big(\check C^{(2)}_2(x_1)\big)^\dagger\right] ^{[l]}\left[(\check H^{(2)})^{-1}\right]^{[l]}\left[\check\phi^{(2)}_1(x_2)\right]^{[l]},
\end{align*}
and, consequently, we conclude
\begin{align*}
 K_{C,\phi}^{[l]}(\bar x_1,x_2)-
\left[	\big(\check C^{(2)}_2(x_1)\big)^\dagger\right] ^{[\geq l]}	\left[(\check H^{(2)})^{-1}\Omega_1^{(2)}\right]^{[\geq l, l]} \left[\phi_1(x_2)\right]^{[ l]}=\check K_{C,\phi}^{(2),[l]}(\bar x_1,x_2).
\end{align*}

To prove \eqref{mix FC1} observe that
\begin{align*}
 \big(\Omega^{(1)}_2\big)^\dagger(\check H^{(1)})^{-1}\check C^{(1)}_1(x_2)&=
 H^{-1}C_1(x_2), &
 (\phi_2(x_1))^\dagger
 \big(\Omega^{(1)}_2 \big)^\dagger({\check H}^{(1)})^{-1}&=\big(\check\phi^{(1)}_2(x_1)\big)^\dagger({\check H}^{(1)})^{-1},
\end{align*}
imply
\begin{align*}
&\begin{multlined}[t][0.9\textwidth]
\left[(\phi_2(x_1))^\dagger\right]^{[l]}   \left[\big(\Omega^{(1)}_2\big)^\dagger(\check H^{(1)})^{-1}\right]^{[l]}\left[\check C^{(1)}_1(x_2)\right]^{[l]}+
\left[(\phi_2(x_1))^\dagger\right]^{[l]}    \left[\big(\Omega^{(1)}_2\big)^\dagger(\check H^{(1)})^{-1}\right]^{[l,\geq l]}\left[\check C^{(1)}_1(z)\right]^{[\geq l]}\\=
\left[(\phi_2(x_1))^\dagger\right]^{[l]}   \left[ H^{-1}\right]^{[l]}\left[C_1(x_2)\right]^{[l]}
\end{multlined},\\
&
\left[(\phi_2(x_1))^\dagger\right]^{[l]}\left[ \big(\Omega^{(1)}_2 \big)^\dagger({\check H}^{(1)})^{-1}\right]^{[l]}\left[\check C^{(1)}_{1}(x_2)\right]^{[l]}=
\left[\big(\check\phi^{(1)}_2(x_1)\big)^\dagger\right]^{[l]}\left[(\bar{\check H}^{(1)})^{-1}\right]^{[l]}
\left[\check C^{(1)}_{1}(x_2)\right]^{[l]}
\end{align*}
whose difference is
\begin{align*}
-\left[(\check C^{(1)}_1(x_2))^\top\right]^{[\geq l]}
 \left[(\check H^{(1)})^{-1}\bar\Omega^{(1)}_2\right]^{[\geq l,l]}\left[\overline{(\phi_2(x_1))}\right]^{[l]}   =
\check K_{\phi,C}^{(1),[l]}(\bar x_1,x_2)-K_{\phi,C}^{[l]}(\bar x_1,x_2).
\end{align*}

Finally, just consider
\begin{align*}
(\check H^{(2)})^{-1}	\Omega^{(2)}_1 C_{1}(x_2)-\overline{ L^{(2)}}(x_2)(\check H^{(2)})^{-1}	\check C_{1}^{(2)}(x_2)&=-(\check H^{(2)})^{-1}	\prodint{ \check\phi^{(2)}_1(z_1), \delta L^{(2)}(\bar x_2,z_2)}_{ \check u^{(2)} },\\
(\check\phi^{(2)}_2(x_1))	^\dagger (\check H^{(2)})^{-1}\Omega^{(2)}_1&=\overline{L^{(2)}(x_1)} (\phi_2(x_1))^\dagger H^{-1},\\
\end{align*}
that lead to
\begin{align*}
&\begin{multlined}[t][0.9\textwidth]
\left[(\check\phi^{(2)}_2(x_1))^\dagger\right]^{[l]}\left[(\check H^{(2)})^{-1}\Omega^{(2)}_1 \right]^{[l]}\left[C_{1}(x_2)\right]^{[l]}-\overline{ L^{(2)}}(x_2)\left[(\check\phi^{(2)}_2(x_1))^\dagger\right]^{[l]}\left[(\check H^{(2)})^{-1}\right]^{[l]}\left[\check C_{1}^{(2)}(x_2)\right]^{[l]}\\=-\left[(\check\phi^{(2)}_2(x_1))^\dagger\right]^{[l]}\left[(\check H^{(2)})^{-1}	\right]^{[l]}\prodint{ \left[\check\phi^{(2)}_1(z_1)\right]^{[l]},\delta L^{(2)}(\bar x_2,z_2)}_{ \check u^{(2)} },
\end{multlined}\\
&\begin{multlined}[t][0.9\textwidth]
\left[(\check\phi^{(2)}_2(x_1))	^\dagger \right]^{[l]}\left[(\check H^{(2)})^{-1}\Omega^{(2)}_1\right]^{[l]}\left[C_{1}(x_2)\right]^{[l]}+\left[(\check\phi^{(2)}_2(x_1))	^\dagger \right]^{[\geq l]}\left[(\check H^{(2)})^{-1}\Omega^{(2)}_1\right]^{[\geq l,l]}\left[C_{1}(x_2)\right]^{[l]}
\\=\overline{L^{(2)}(x_1)}\left[ (\phi_2(x_1))^\dagger\right]\left[ H^{-1}\right]^{[l]}\left[C_{1}(x_2)\right]^{[l]},
\end{multlined}
\end{align*}
that once subtracted provide us with the relation
\begin{multline*}
\left[(\check\phi^{(2)}_2(x_1))^\dagger \right]^{[\geq l]}\left[(\check H^{(2)})^{-1}\Omega^{(2)}_1\right]^{[\geq l,l]}\left[C_{1}(x_2)\right]^{[l]}
=\overline{L^{(2)}(x_1)} K_{\phi,C}^{[l]}(\bar x_1,x_2)-\overline{ L^{(2)}}(x_2)\check K_{\phi,C}^{(2),[l]}(\bar x_1,x_2)
\\+\prodint{ \check K^{(2),[l]}(\bar x_1,z_1), \delta L^{(2)}(\bar x_2,z_2)}_{\check u^{(2)}}.
\end{multline*}
Recall Corollary \ref{CDproyeccion} and  Proposition \ref{simetricos} we have
\begin{align*}
%\prodint{\frac{L^{(1)}(\bar x_1)-L^{(1)}(z_1)}{\bar x_1- z_1}, \overline{\check K^{(1),[l]}(\bar z_2,x_2)}}_{\check u^{(1)}}&=\frac{L^{(1)}(\bar x_1)-L^{(1)}(x_2)}{\bar x_1- x_2},\\
\prodint{ \check K^{(2),[l]}(\bar x_1,z_1), \frac{L^{(2)}(  \bar x_2)	- L^{(2)}(z_2)}{ \bar x_2-z_2}}_{\check u^{(2)}}&=\frac{
	\overline{ L^{(2)}}(\bar x_1)-\overline{ L^{(2)}}(   x_2)}{  \bar x_1-x_2}.
\end{align*}
and we get the result.
%and introduce it into \eqref{mix CF12} and \eqref{mix FC2}.
\end{proof}
\begin{proof}[Proof of Proposition \ref{restos}]
To get \eqref{caleGer}, observe that for  $l=0,\dots,m^{(2)}_{j}-1$ we have
$	\frac{\d^l}{\d z^l}\bigg|_{z=\bar \zeta^{(2)}_{j}}
	\prodint{\check\phi^{(2)}_{1,k}(z_1),\frac{L^{(2)}(\bar z)}{\bar z-z_2}}_{u(\overline{L^{(2)}(z_2)})^{-1}}=0$.
	This holds because
	\begin{gather*}
	\frac{\d^l}{\d z^l}\bigg|_{ z=\bar\zeta^{(2)}_{j}}
	\prodint{\check\phi^{(2)}_{1,k}(z_1),\frac{L^{(2)}(\bar z)}{\bar z-z_2}}_{u(\overline{L^{(2)}(z_2)})^{-1}}=
	\prodint{u_{z_1,\bar z_2},\check\phi^{(2)}_{1,k}(z_1)\otimes(\overline{L^{(2)}(z_2)})^{-1}\frac{\d^l}{\d z^l}\bigg|_{z=\bar \zeta^{(2)}_{j}}\frac{\overline{ L^{(2)}}( z)}{z-\bar z_2}}\\
	=\sum_{r=0}^{l}\binom{l}{r}\prodint{u_{z_1,\bar z_2},\check\phi^{(2)}_{1,k}(z_1)\otimes(\overline{L^{(2)}(z_2)})^{-1}\left(\frac{\d^r	\overline{ L^{(2)}}( z)}{\d z^r}\bigg|_{z=\bar\zeta^{(2)}_{j}}
		\frac{\d^{l-r}}{\d z^{l-r}}\bigg|_{z=\bar \zeta^{(2)}_{j}}\frac{1}{z-\bar z_2}\right)}
	\end{gather*}
	but $\dfrac{\d^r 	\overline{ L^{(2)}}( z)}{\d z^r}\bigg|_{z=\bar\zeta^{(2)}_{j}}=0$ for  $r\in\{0,\dots,m^{(2)}_{j}-1\}$, and
	since $\operatorname{supp}_2(u)\cap \overline{\sigma(L^{(2)})}=\varnothing$, we get the result. Therefore, we conclude that
	\begin{align*}
	\frac{\d^l}{\d z^l}\bigg|_{z=\bar \zeta^{(2)}_{j}}\overline{ L^{(2)}}(z)\check C_{1,k}^{(2)}(z)&=\sum_{i=1}^{d^{(2)}}\prodint{\xi^{(2)}_{i},	\check\phi^{(2)}_{1,k}}\frac{\d^l}{\d z^l}\bigg|_{z=\bar \zeta^{(2)}_{j}}	 \overline{L^{(2)}_{[i]}}(z)\begin{bmatrix}
	\big(\bar z- \zeta^{(2)}_{i}\big)^{m^{(2)}_{i}-1}\\ \big(\bar z- \zeta^{(2)}_{i}\big)^{m^{(2)}_{1}-2}\\\vdots\\1
	\end{bmatrix}.
	\end{align*}
For $l>0$,  we have
	\begin{align*}
	\frac{\d^l}{\d z^l}\bigg|_{z=\bar\zeta^{(2)}_{j}}	 \overline{L^{(2)}_{[i]}}(z)\big(z-\bar \zeta^{(2)}_{i}\big)^{m}
	&=\sum_{r=0}^{m}\binom{l}{m-r} \Big(\frac{\d^{l-m+r}}{\d z^{l-m+r}}\bigg|_{z=\bar\zeta^{(2)}_{j}}	\overline{L^{(2)}_{[i]}}(z)\Big)
	\frac{m!}{(m-r)!}\Big(\overline{\zeta^{(2)}_{j}}- \overline{\zeta^{(2)}_{i}}\Big)^{r}\\
	&=\sum_{r=0}^{m}
	\sum_{s=1}^{l-m+r}
	\frac{l!m!}{s!(m-r)!}
	\overline{\big(L^{(2)}_{[i]}\big)^{(s)}(\zeta^{(2)}_{j})}
\Big(\overline{\zeta^{(2)}_{j}}- \overline{\zeta^{(2)}_{i}}\Big)^{r}.
	\end{align*}		
	But, if $i\neq j$,  $\big(L^{(2)}_{[i]}\big)^{(s)}(\zeta^{(2)}_{j})=0$ for $s\in\big\{0,1,\dots,m^{(2)}_j-1\big\}$, which is our case because  $l\in\big\{0,1,\dots,m^{(2)}_{j}-1\big\}$; when $i=j$ we get that only  terms with $ r = 0 $ will survive and, therefore, $m\leq l$ with
$
	\frac{1}{l!}	\frac{\d^l}{\d z^l}\bigg|_{z=\bar\zeta^{(2)}_{j}}	 \overline{L^{(2)}_{[j]}}(z)\big(z-\bar \zeta^{(2)}_{j}\big)^{m}
	=\ell^{(2)}_{[j],l-m}$.

To show \eqref{chorro (1)}	let's compute
	$\dfrac{1}{l!}\dfrac{\d^l}{\d \bar z^l}\bigg|_{\bar z=\zeta^{(1)}_{j}}\overline{	\bar L^{(1)}(z)\check C_{2,k}^{(1)}(z)}$ para $l\in\{0,1,\dots,m^{(1)}_{j}-1\}$.  For that aim we evaluate
	\begin{multline*}
	\dfrac{\d^l}{\d \bar z^l}\bigg|_{\bar z=\zeta^{(1)}_{j}}\prodint{\frac{L^{(1)}(\bar z)}{\bar z-z_1}, \check \phi^{(1)}_{2,k}(z_2)}_{ (L^{(1)}(z_1))^{-1}u}=
	\prodint{\dfrac{\d^l}{\d \bar z^l}\bigg|_{\bar z=\zeta^{(1)}_{j}}\frac{L^{(1)}(\bar z)}{\bar z-z_1}, \check \phi^{(1)}_{2,k}(z_2)}_{ (L(z_1))^{-1}u}\\
	=\sum_{r=0}^l\binom{l}{r} (l-r)!\dfrac{\d^{r} L^{(1)}(\bar z)}{\d \bar z^{r}}\bigg|_{\bar z=\zeta^{(1)}_{j}}(-1)^{l-r}\prodint{\frac{1}{(\zeta^{(1)}_{j}-z_1)^{l-r+1}}, \check \phi^{(1)}_{2,k}(z_2)}_{ (L^{(1)}(z_1))^{-1}u},
	\end{multline*}
that,  remembering that the zeros are not in the support of the linear functional, vanishes. Finally we realize, that
	\begin{align*}
	\frac{1}{l!}\dfrac{\d^l}{\d \bar z^l}\bigg|_{\bar z=\zeta^{(1)}_{j}}L^{(1)}_{[i]}(\bar z) \big(\bar z-\zeta^{(1)}_i\big)^m&=
	\sum_{r=0}^{l} \binom{l}{r}\dfrac{\d^rL^{(1)}_{[i]}(\bar z)}{\d \bar z^r}\bigg|_{\bar z=\zeta^{(1)}_{j}}\frac{m!}{(l-r)!}\big(\zeta^{(1)}_{j}-\zeta^{(1)}_{i}\big)^{m-l+r}\\
	&=\begin{cases}
	0, & i\neq j,\\
	{	\bar \ell^{(1)}_{[j],l-m}},& i=j.
	\end{cases}
	\end{align*}
\end{proof}

\begin{proof}[Proof of Theorem \ref{Christoffel-Geronimus formulas}]
To show \eqref{Ger12} we proceed as follows. From  \eqref{CC21k>} we get
	\begin{align*}
	\mathcal J^{ \overline{L^{(2)}}} _{{ \overline{L^{(2)}}} \check C^{(2)}_{{1,l}}} &=\big[
	(\Omega^{(2)}_1)_{l,l-2n},\dots,(\Omega^{(2)}_1)_{l,l-1} \big]
	\begin{bmatrix}
	\mathcal J^{ \overline{L^{(2)}}} _{ C_{1,l-2n}}\\\vdots \\ \mathcal J^{ \overline{L^{(2)}}} _{C_{1,l-1}}
	\end{bmatrix}+\mathcal J^{ \overline{L^{(2)}}} _{C_{1,l}}.
	\end{align*}
	Using \eqref{caleGer} and
	\begin{align}\label{conexion}
	(\Omega_1^{(2)})_{l,l-2n}
	\phi_{1,l-2n}(z)+\dots +(\Omega_1^{(2)})_{l,l-1}\phi_{1,l-1}(z)+\phi_{1,l}(z)=\check\phi^{(2)}_1(z),
	\end{align}
	see Proposition \ref{Geronimus Connection Laurent}, we deduce that
	\begin{align*}
	\mathcal J^{ \overline{L^{(2)}}} _{{ \overline{L^{(2)}}} \check C^{(2)}_{{1,l}}} &=\big[
	(\Omega^{(2)}_1)_{l,l-2n},\dots,(\Omega^{(2)}_1)_{l,l-1} \big]
	\begin{bmatrix}
	\prodint{\xi^{(2)},\phi_{1,l-2n}}\\\vdots \\ \prodint{\xi^{(2)},\phi_{1,l-1}}
	\end{bmatrix}
	\mathcal L^{(2)}+\prodint{\xi^{(2)},\phi_{1,l}}\mathcal L^{(2)},
	\end{align*}
	and, consequently,
	\begin{align*}
	\big[
	(\Omega^{(2)}_1)_{l,l-2n},\dots,(\Omega^{(2)}_1)_{l,l-1} \big]=-\Big(\mathcal J^{ \overline{L^{(2)}}} _{C_{1,l}}-\prodint{\xi^{(2)},\phi_{1,l}}\mathcal L^{(2)}\Big)\begin{bmatrix}
	\mathcal J^{ \overline{L^{(2)}}} _{ C_{1,l-2n}}-\prodint{\xi^{(2)},\phi_{1,l-2n}}\mathcal L^{(2)}\\\vdots \\
	\mathcal J^{ \overline{L^{(2)}}} _{ C_{1,l-1}}-\prodint{\xi^{(2)},\phi_{1,l-1}}\mathcal L^{(2)}
	\end{bmatrix}^{-1}.
	\end{align*}
	Recalling  \eqref{conexion}  we conclude with the proof of this Christoffel formula. From this very equation, we obtain
	\begin{align*}
	(\Omega^{(2)}_1)_{l,l-2n}=-\big(\mathcal J^{ \overline{L^{(2)}}} _{C_{1,l}}-\prodint{\xi^{(2)},\phi_{1,l}}\mathcal L^{(2)}\big)\begin{bmatrix}
	\mathcal J^{ \overline{L^{(2)}}} _{ C_{1,l-2n}}-\prodint{\xi^{(2)},\phi_{1,l-2n}}\mathcal L^{(2)}\\\vdots \\
	\mathcal J^{ \overline{L^{(2)}}} _{ C_{1,l-1}}-\prodint{\xi^{(2)},\phi_{1,l-1}}\mathcal L^{(2)}
	\end{bmatrix}^{-1}\begin{bmatrix}
	1\\0\\\vdots\\0
	\end{bmatrix}
	\end{align*}
and recalling \eqref{Omegalambda} we get \eqref{GerH1}.
	
For the proof of \eqref{Ger21}, realize that the relation  \eqref{CC12k>}	gives
	\begin{align}
	\Big[(\Omega^{(1)}_2)_{l,l-2n} , \dots, (\Omega^{(1)}_2)_{l,l-1} \Big]\begin{bmatrix}
	\mathcal J^{ \overline{L^{(1)}}} _{C_{2,l-2n}}\\\vdots\\\mathcal J^{ \overline{L^{(1)}}} _{C_{2,l-2n}}
	\end{bmatrix}+\mathcal J^{ \overline{L^{(1)}}} _{C_{2,l-1}}=\mathcal J^{ \overline{L^{(1)}}}_ {{ \overline{L^{(1)}}}  \check C^{(1)}_{2,l}}.
	\end{align}
On the other hand,  from Proposition \ref{Geronimus Connection Laurent} we know that
	\begin{align*}
	(\Omega^{(1)}_2)_{l,l-2n}
	\phi_{2,l-2n}(z)+\dots
	+(\Omega^{(1)}_2)_{l,l-1}
	\phi_{2,l-1}(z)+\phi_{2,l}(z)=\check \phi_{2,l}^{(1)}(z).
	\end{align*}
	Now, taking into account \eqref{chorro (1)} we conclude
	\begin{align}
	\Big[(\Omega^{(1)}_2)_{l,l-2n} , \dots, (\Omega^{(1)}_2)_{l,l-1} \Big]=-\Big(\mathcal J^{ \overline{L^{(1)}}} _{C_{2,l-1}}-{\prodint{\xi^{(1)},	\phi_{2,l}}}{\mathcal L^{(1)}}\Big)\begin{bmatrix}
	\mathcal J^{ \overline{L^{(1)}}} _{C_{2,l-2n}}-{\prodint{\xi^{(1)},	\phi_{2,l-2n}}}{\mathcal L^{(1)}}\\\vdots\\\mathcal J^{ \overline{L^{(1)}}} _{C_{2,l-1}}
	-{\prodint{\xi^{(1)},	\phi_{2,l-1}}}{\mathcal L^{(1)}}\end{bmatrix}^{-1},
	\end{align} from where the Christoffel-Geronimus formula follows.
	
This equation also implies that
	 \begin{align}
	 (\Omega^{(1)}_2)_{l,l-2n}=-\Big(\mathcal J^{ \overline{L^{(1)}}} _{C_{2,l-1}}-{\prodint{\xi^{(1)},	\phi_{2,l}}}{\mathcal L^{(1)}}\Big)\begin{bmatrix}
	 \mathcal J^{ \overline{L^{(1)}}} _{C_{2,l-2n}}-{\prodint{\xi^{(1)},	\phi_{2,l-2n}}}{\mathcal L^{(1)}}\\\vdots\\\mathcal J^{ \overline{L^{(1)}}} _{C_{2,l-1}}
	 -{\prodint{\xi^{(1)},	\phi_{2,l-1}}}{\mathcal L^{(1)}}\end{bmatrix}^{-1}\begin{bmatrix}
	 1\\0\\\vdots\\0
	 \end{bmatrix},
	 \end{align}
	 and recalling \eqref{Omegalambda} we get \eqref{GerH2}.

We will prove  \eqref{Ger11} y \eqref{Ger22} simultaneously.
Let's write   \eqref{mix CF12}  and  \eqref{mix FC22}  as follows
		\begin{align*}
		%\label{mix CF12}	
		&\begin{multlined}[t][0.9\textwidth]
	\sum_{k=0}^{l-1}\overline{{ \overline{L^{(1)}}} (x_1)\check C^{(1)}_{2,k}(x_1)}(\check H^{(1)}_{k})^{-1}\check\phi^{(1)}_{1,k}(x_2)-	L^{(1)}(x_2)K_{C,\phi}^{[l]}(\bar x_1,x_2)-
		\delta L^{(1)}(\bar x_1,x_2)
		\\=
		-{\Big[\check \phi^{(1)}_{1,l}(x_2),\dots,
			\check \phi^{(1)}_{1,l+2n-1}(x_2)\Big]}
({\check H}^{(1)}_{[n,l]})^{-1}
		\bar\Omega^{(1)}_2[n,l]
		\begin{bmatrix}
		\overline{ C_{2,l-2n}(x_1)}\\ \vdots\\  \overline{
			C_{2,l-1}(x_1)}
		\end{bmatrix},
		\end{multlined}	
\\
			&\begin{multlined}[t][0.9\textwidth]
			\sum_{k=0}^{l-1}\overline{\check \phi^{(2)}_{2,k}(x_1)}(\check H^{(2)}_{k})^{-1}\overline{ L^{(2)}}(x_2)\check C^{(2)}_{1,k}(x_2)-	\overline{ L^{(2)}}(\bar x_1) K_{\phi,C}^{[l]}(\bar x_1,x_2)
			-\delta \overline{ L^{(2)}}(\bar x_1,x_2)
			\\=
			-\Big[\overline{\check \phi^{(2)}_{2,l}(x_1)},\dots, \overline{\check \phi^{(2)}_{2,l+2n-1}(x_1)}\Big]
			(\check H^{(2)}[n,l])^{-1}
			\Omega^{(2)}_1[n,l]
			\begin{bmatrix}
			C_{1,l-2n}(x_2)\\\vdots\\C_{1,l-1}(x_2)
			\end{bmatrix}.
			\end{multlined}
			\end{align*}
and let's compute the spectral jets, with respect to the first and second variables, respectively
				\begin{align*}
				&\begin{multlined}[t][0.9\textwidth]
	\sum_{k=0}^{l-1}(\check H^{(1)}_{k})^{-1}\check\phi^{(1)}_{1,k}(x_2)
				\overline{\mathcal J^{ \overline{L^{(1)}}} _{{ \overline{L^{(1)}}} \check C^{(1)}_{2,k}}}
				-	L^{(1)}(x_2)\mathcal J^{ \overline{L^{(1)}}} _{K_{C,\phi}^{[l]}}(x_2)-
			\mathcal J^{\bar L^{(1)}}_{\delta L^{(1)}}(x_2)
				\\=
				-{\Big[\check \phi^{(1)}_{1,l}(x_2),\dots,
					\check \phi^{(1)}_{1,l+2n-1}(x_2)\Big]}
				({\check H}^{(1)}_{[n,l]})^{-1}
				\bar\Omega^{(1)}_2[n,l]
				\begin{bmatrix}
				\overline{ \mathcal J^{ \overline{L^{(1)}}} _{C_{2,l-2n}}}\\ \vdots\\  \overline{
				\mathcal J^{ \overline{L^{(1)}}} _{	C_{2,l-1}}}
				\end{bmatrix},
				\end{multlined}	
				\end{align*}
				\begin{align*}
				&\begin{multlined}[t][0.9\textwidth]
				\sum_{k=0}^{l-1}\overline{\check \phi^{(2)}{_{2,k}}(x_1)}(\check H^{(2)}_{k})^{-1}\mathcal  J^{ \overline{L^{(2)}}} _{{ \overline{L^{(2)}}} \check C^{(2)}_{1,k}}-	\overline{ L^{(2)}}(\bar x_1) \mathcal  J^{{ \overline{L^{(2)}}} }_{K_{\phi,C}^{[l]}}(\bar x_1)
				-\mathcal  J^{ \overline{L^{(2)}}} _{\delta { \overline{L^{(2)}}} }(\bar x_1)
				\\=
				-\Big[\overline{\check \phi^{(2)}_{2,l}(x_1)},\dots, \overline{\check \phi^{(2)}_{2,l+2n-1}(x_1)}\Big]
				(\check H^{(2)}[n,l])^{-1}
				\Omega^{(2)}_1[n,l]
				\begin{bmatrix}
			\mathcal J^{{ \overline{L^{(2)}}} }_{	C_{1,l-2n}}\\\vdots\\\mathcal J^{{ \overline{L^{(2)}}} }_{C_{1,l-1}(x_2)}
				\end{bmatrix}.
				\end{multlined}
				\end{align*}
			From \eqref{caleGer} and \eqref{chorro (1)} we deduce
						\begin{align*}
						%\label{mix CF12}	
						&\begin{multlined}[t][0.9\textwidth]
						\sum_{k=0}^{l-1}(\check H^{(1)}_{k})^{-1}\check\phi^{(1)}_{1,k}(x_2)
					\overline{\prodint{\xi^{(1)},	\check\phi^{(1)}_{2,k}}	\mathcal L_1^{(1)}}
						-	L^{(1)}(x_2)\mathcal J^{ \overline{L^{(1)}} }_{K_{C,\phi}^{[l]}}(x_2)-
						\mathcal J^{{ \overline{L^{(1)}}} }_{\delta L}(x_2)
						\\=
						-{\Big[\check \phi^{(1)}_{1,l}(x_2),\dots,
							\check \phi^{(1)}_{1,l+2n-1}(x_2)\Big]}
						({\check H}^{(1)}_{[n,l]})^{-1}
						\bar\Omega^{(1)}_2[n,l]
						\begin{bmatrix}
						\overline{ \mathcal J^{ \overline{L^{(1)}}}_{C_{2,l-2n}}}\\ \vdots\\  \overline{
							\mathcal J^{ \overline{L^{(1)}}} _{	C_{2,l-1}}}
						\end{bmatrix},
						\end{multlined}	
						\end{align*}
						\begin{align*}%\label{mix FC22}	
						&\begin{multlined}[t][0.9\textwidth]
						\sum_{k=0}^{l-1}\overline{\check \phi^{(2)}_{2,k}(x_1)}(\check H^{(2)}_{k})^{-1}
					\prodint{\xi^{(2)},\check\phi^{(2)}_{1,k}}\mathcal L^{(2)}
						-	\overline{ L^{(2)}}(\bar x_1) \mathcal  J^{{ \overline{L^{(2)}}} }_{K_{\phi,C}^{[l]}}(\bar x_1)
						-\mathcal  J^{{ \overline{L^{(2)}}} }_{\delta { \overline{L^{(2)}}} }(\bar x_1)
						\\=
						-\Big[\overline{\check \phi^{(2)}_{2,l}(x_1)},\dots, \overline{\check \phi^{(2)}_{2,l+2n-1}(x_1)}\Big]
						(\check H^{(2)}[n,l])^{-1}
						\Omega^{(2)}_1[n,l]
						\begin{bmatrix}
						\mathcal J^{ \overline{L^{(2)}}}  _{C_{1,l-2n}}\\\vdots\\ 	\mathcal J^{ \overline{L^{(2)}}}  _{C_{1,l-1}}
						\end{bmatrix}.
						\end{multlined}
						\end{align*}
Recalling \eqref{kernelChristoffeldefinicion} we get
						\begin{align*}
						&\begin{multlined}[t][0.9\textwidth]
						\prodint{\overline{(\xi^{(1)})_z},	\check K^{(1),[l]}(\bar z,x_2)}	\overline{		\mathcal L^{(1)}}
						=	L^{(1)}(x_2)\mathcal J^{ \overline{L^{(1)}}} _{K_{C,\phi}^{[l]}}(x_2)+
						\mathcal J^{ \overline{L^{(1)}}} _{\delta L^{(1)}}(x_2)
						\\
						-{\Big[\check \phi^{(1)}_{1,l}(x_2),\dots,
							\check \phi^{(1)}_{1,l+2n-1}(x_2)\Big]}
						({\check H}^{(1)}_{[n,l]})^{-1}
						\bar\Omega^{(1)}_2[n,l]
						\begin{bmatrix}
						\overline{ \mathcal J^{ \overline{L^{(1)}}} _{C_{2,l-2n}}}\\ \vdots\\  \overline{
							\mathcal J^{ \overline{L^{(1)}}} _{	C_{2,l-1}}}
						\end{bmatrix},
						\end{multlined}	
						\end{align*}
						\begin{align*}
						&\begin{multlined}[t][0.9\textwidth]
						\prodint{(\xi^{(2)})_z,\check K^{(2),[l]}(\bar x_1,z)}\mathcal L^{(2)}
						=	\overline{ L^{(2)}}(\bar x_1) \mathcal  J^{\bar L^{(2}}_{K_{\phi,C}^{[l]}}(\bar x_1)
						+\mathcal  J^{ \overline{L^{(2)}}} _{\delta { \overline{L^{(2)}}} }(\bar x_1)
						\\
						-\Big[\overline{\check \phi^{(2)}_{2,l}(x_1)},\dots, \overline{\check \phi^{(2)}_{2,l+2n-1}(x_1)}\Big]
						(\check H^{(2)}[n,l])^{-1}
						\Omega^{(2)}_1[n,l]
						\begin{bmatrix}
						\mathcal J^{ \overline{L^{(2)}}}  _{C_{1,l-2n}}\\\vdots\\ 	\mathcal J^{ \overline{L^{(2)}}}  _{C_{1,l-1}}
						\end{bmatrix}.
						\end{multlined}
						\end{align*}
Now, from 	 \eqref{GerKerNor1} and \eqref{GerKerNor2}, we get		
			\begin{align*}%\label{GerKerNor1}{GerKerNor2}
			&\begin{multlined}[t][0.9\textwidth]
		\prodint{\overline{(\xi^{(1)})_z,}	{\check K^{(1),[l]}(\bar z,x_2)}}\overline{		\mathcal L^{(1)}}={L^{(1)}(x_2)}
		 	\prodint{\overline{(\xi^{(1)})_z},	{K^{[l]}(\bar z,x_2)}}\overline{		\mathcal L^{(1)}}\\-{\Big[\check \phi^{(1)}_{1,l}(x_2),\dots,
				\check \phi^{(1)}_{1,l+2n-1}(x_2)\Big]}
			({\check H}^{(1)}[n,l])^{-1}\bar\Omega_2^{(1)}[n,l]
			\begin{bmatrix}
			\overline{		\prodint{(\xi^{(1)})_z,	{ \phi_{2,l-2n}(z)}}	\mathcal L^{(1)}}\\ \vdots\\  	\overline{	\prodint{(\xi^{(1)})_z,	{
				\phi_{2,l-1}(z)}}	\mathcal L^{(1)}}
			\end{bmatrix},
			\end{multlined}	\end{align*}
			
			\begin{align*}%\label{GerKerNor2}
			&\begin{multlined}[t][0.9\textwidth]
		\prodint{(\xi^{(2)})_z,	\check K^{(2),[l]}(\bar x_1,z)}\mathcal L^{(2)}= {\overline{ L^{(2)}}(\bar x_1)}
		\prodint{(\xi^{(2)})_z,K^{[l]}(\bar x_1,z)}\mathcal L^{(2)}\\-\Big[\overline{\check\phi^{(2)}_{2,l}(x_1)},\dots, \overline{\check\phi^{(2)}_{2,l+2n-1}(x_1)}\Big](
			(\check H^{(2)}[n,l])^{-1}\Omega^{(2)}_1[n,l]
			\begin{bmatrix}\prodint{\xi^{(2)},
			\phi_{1,l-2n}}\mathcal L^{(2)}\\\vdots\\\prodint{\xi^{(2)},\phi_{1,l-1}}\mathcal L^{(2)}
			\end{bmatrix}.
			\end{multlined}
			\end{align*}
Therefore, we conclude
%	\begin{align*}
%	&\begin{multlined}[t][0.9\textwidth]
%	L^{(1)}(x_2)\Big(\mathcal J^{\bar L^{(1)}}_{K_{C,\phi}^{[l]}}(x_2)-\prodint{\overline{(\xi^{(1)})_z},	{K^{[l]}(\bar z,x_2)}}\overline{\mathcal L^{(1)}}
%	\Big)+
%	\mathcal J^{\bar L^{(1)}}_{\delta L_1}(x_2)
%	\\=
%	-{\Big[\check \phi^{(1)}_{1,l}(x_2),\dots,
%		\check \phi^{(1)}_{1,l+2n-1}(x_2)\Big]}
%	({\check H}^{(1)}_{[n,l]})^{-1}
%	\bar\Omega^{(1)}_2[n,l]
%	\begin{bmatrix}
%	\overline{ \mathcal J^{\bar L^{(1)}}_{C_{2,l-2n}}-	{\prodint{(\xi^{(1)})_z,	{ \phi_{2,l-2n}(z)}}	}	\mathcal L^{(1)}}\\ \vdots\\  \overline{
%		\mathcal J^{\bar L^{(1)}}_{	C_{2,l-1}}-
%		{\prodint{(\xi^{(1)})_z,	{ \phi_{2l-1}(z)}}	}	\mathcal L^{(1)}}
%	\end{bmatrix},
%	\end{multlined}	
%	\end{align*}
%	\begin{align*}
%	&\begin{multlined}[t][0.9\textwidth]
%	\overline{ L^{(2)}}(\bar x_1) \Big(\mathcal  J^{\bar L^{(2}}_{K_{\phi,C}^{[l]}}(\bar x_1)-	\prodint{(\xi^{(2)})_z,K^{[l]}(\bar x_1,z)}\mathcal L^{(2)}
%	\Big)+
%	\mathcal  J^{\bar L^{(2}}_{\delta \bar L_2}(\bar x_1)	\\=
%	-\Big[\overline{\check \phi^{(2)}_{2,l}(x_1)},\dots, \overline{\check \phi^{(2)}_{2,l+2m-1}(x_1)}\Big]
%	(\check H^{(2)}[n,l])^{-1}
%	\Omega^{(2)}_1[n,l]
%	\begin{bmatrix}
%	\mathcal J^{\bar L^{(2}} _{C_{1,l-2n}}-\prodint{\xi^{(2)},\phi_{1,l-2n}}\mathcal L^{(2)}\\\vdots\\ 	\mathcal J^{\bar L^{(2}} _{C_{1,l-1}}-\prodint{\xi^{(2)},\phi_{1,l-1}}\mathcal L^{(2)}
%	\end{bmatrix},
%	\end{multlined}
%	\end{align*}
%from where we infer that
\begin{align*}
&\begin{multlined}[t][0.9\textwidth]
\Big(L^{(1)}(x_2)\Big(\mathcal J^{\bar L}_{K_{C,\phi}^{[l]}}(x_2)-\prodint{\overline{(\xi^{(1)})_z},	{K^{[l]}(\bar z,x_2)}}\overline{\mathcal L^{(1)}}
\Big)+
\mathcal J^{\bar L^{(1)}}_{\delta L_1}(x_2)\Big)\begin{bmatrix}
\overline{ \mathcal J^{\bar L^{(1)}}_{C_{2,l-2n}}-	{\prodint{(\xi^{(1)})_z,	{ \phi_{2,l-2n}(z)}}	}	\mathcal L^{(1)}}\\ \vdots\\  \overline{
	\mathcal J^{\bar L^{(1)}}_{	C_{2,l-1}}
-	{\prodint{(\xi^{(1)})_z,	{ \phi_{2,l-1}(z)}}	}	\mathcal L^{(1)}}
\end{bmatrix}^{-1}
\\=
{\Big[\check \phi^{(1)}_{1,l}(x_2),\dots,
	\check \phi^{(1)}_{1,l+2n-1}(x_2)\Big]}
({\check H^{(1)}[n,l]})^{-1}
\bar\Omega^{(1)}_2[n,l],
\end{multlined}	
\end{align*}
\begin{align*}
&\begin{multlined}[t][0.9\textwidth]
\Big(\overline{ L^{(2)}}(\bar x_1) \Big(\mathcal  J^{\bar L^{(2}}_{K_{\phi,C}^{[l]}}(\bar x_1)-	\prodint{(\xi^{(2)})_z,K^{[l]}(\bar x_1,z)}\mathcal L^{(2)}
\Big)+
\mathcal  J^{\bar L^{(2}}_{\delta \bar L_2}(\bar x_1)\Big)\begin{bmatrix}
\mathcal J^{\bar L^{(2}} _{C_{1,l-2n}}-\prodint{\xi^{(2)},\phi_{1,l-2n}}\mathcal L^{(2)}\\\vdots\\ 	\mathcal J^{\bar L^{(2}} _{C_{1,l-1}}-\prodint{\xi^{(2)},\phi_{1,l-1}}\mathcal L^{(2)}
\end{bmatrix}^{-1}
\\=
\Big[\overline{\check \phi_{2,l}(x_1)},\dots, \overline{\check \phi_{2,l+2m-1}(x_1)}\Big]
(\check H^{(2)}[n,l])^{-1}
\Omega^{(2)}_1[n,l],
\end{multlined}
\end{align*}
and, in particular, recalling \eqref{Omegalambda}, we obtain
	\begin{align*}
	&\begin{multlined}[t][\textwidth]
	\check \phi^{(1)}_{1,l}(x_2)
	 L^{(1)}_{(-1)^ln}\\=\Big(L^{(1)}(x_2)\Big(\mathcal J^{ \overline{L^{(1)}}} _{K_{C,\phi}^{[l]}}(x_2)-\prodint{\overline{(\xi^{(1)})_z},	{K^{[l]}(\bar z,x_2)}}\overline{\mathcal L^{(1)}}
	\Big)+
	\mathcal J^{ \overline{L^{(1)}}} _{\delta L^{(1)}}(x_2)\Big)\begin{bmatrix}
	\overline{ \mathcal J^{ \overline{L^{(1)}}} _{C_{2,l-2n}}-	{\prodint{(\xi^{(1)})_z,	{ \phi_{2,l-2n}(z)}}	}	\mathcal L^{(1)}}\\ \vdots\\  \overline{
		\mathcal J^{ \overline{L^{(1)}}} _{	C_{2,l-1}}
		-{\prodint{(\xi^{(1)})_z,	{ \phi_{2,l-1}(z)}}	}	\mathcal L^{(1)}}
	\end{bmatrix}^{-1}\begin{bmatrix}
	H_{l-2n}\\0\\\vdots\\0
	\end{bmatrix},
	\end{multlined}	
	\end{align*}
	\begin{align*}
	&\begin{multlined}[t][0.9\textwidth]
	\overline{\check \phi^{(2)}_{2,l}(x_1)L^{(2)}_{(-1)^ln}}  \\=\Big(\overline{ L^{(2)}}(\bar x_1) \Big(\mathcal  J^{ \overline{L^{(2)}}} _{K_{\phi,C}^{[l]}}(\bar x_1)-	\prodint{(\xi^{(2)})_z,K^{[l]}(\bar x_1,z)}\mathcal L^{(2)}
	\Big)+
	\mathcal  J^{ \overline{L^{(2)}}} _{\delta { \overline{L^{(2)}}} }(\bar x_1)\Big)\begin{bmatrix}
	\mathcal J^{ \overline{L^{(2)}}}  _{C_{1,l-2n}}-\prodint{\xi^{(2)},\phi_{1,l-2n}}\mathcal L^{(2)}\\\vdots\\ 	\mathcal J^{ \overline{L^{(2)}}}  _{C_{1,l-1}}-\prodint{\xi^{(2)},\phi_{1,l-1}}\mathcal L^{(2)}
	\end{bmatrix}^{-1}\begin{bmatrix}
	H_{l-2n}\\0\\\vdots\\0
	\end{bmatrix}.
	\end{multlined}
	\end{align*}
\end{proof}

\end{document}